\newcommand{\asm}{(M,\sigma)}
\newcommand{\cotan}{\mathrm{T}^*}
\newcommand{\de}{\mathrm{d}}
\newcommand{\R}{\mathbb{R}}
\newcommand{\tpm}{(P,\pi,\phi)}
\newcommand{\ir}{p:\asm \to \tpm}
\newcommand{\Z}{\mathbb{Z}}
\theoremstyle{definition}
\newtheorem{defn}{Definition}
\newtheorem{example}{Example}
\newtheorem{obs}{Observation}
\newtheorem*{qn}{Question}
\theoremstyle{plain}
\newtheorem{theorem}{Theorem}
\newtheorem{proposition}{Proposition}
\newtheorem{corollary}{Corollary}
\newtheorem{lemma}{Lemma}
\theoremstyle{remark}
\newtheorem{remark}{Remark}
\title[Twisted isotropic realisations]{Twisted isotropic realisations of twisted Poisson structures}
\author[Nicola Sansonetto and Daniele Sepe]{}
\subjclass[2010]{Primary: 53D15, 53D17, 70H06; Secondary: 70G45}
\keywords{twisted Poisson manifolds, Hamiltonian integrable systems, isotropic realisations, almost symplectic manifolds}
\email{nicola.sansonetto@gmail.com}
\email{dsepe@math.ist.utl.pt}
\thanks{D.S. was partially supported by the Laura Wisewell Travel Grant of the University of Edinburgh, by the Leicester Mathematical Fellowship of the University of Leicester, and by FCT grant SFRH/BPD/77263/2011. N.S. was partially supported by grant AdR1249109 ``Dinamica anolonoma'' from Universit\`a degli Studi di Verona.}
\begin{document}
\begin{abstract}
Motivated by the recent connection between nonholonomic integrable systems and twisted Poisson manifolds made in \cite{balseiro_garcia_naranjo}, this paper investigates the global theory of integrable Hamiltonian systems on almost symplectic manifolds as an initial step to understand Hamiltonian integrability on twisted Poisson (and Dirac) manifolds. Non-commutative integrable Hamiltonian systems on almost symplectic manifolds were first defined in \cite{fasso_sansonetto}, which proved existence of local generalised action-angle coordinates in the spirit of the Liouville-Arnol'd theorem. In analogy with their symplectic counterpart, these systems can be described globally by twisted isotropic realisations of twisted Poisson manifolds, a special case of symplectic realisations of twisted Dirac structures considered in \cite{bursztyn_crainic_weinstein_zhu}. This paper classifies twisted isotropic realisations up to smooth isomorphism and provides a cohomological obstruction to the construction of these objects, generalising the main results of \cite{daz_delz}.
\end{abstract}

\maketitle

\centerline{\scshape Nicola Sansonetto}
\medskip
{\footnotesize
  \centerline{Dipartimento di Informatica, Universit\`a degli Studi di Verona}
  \centerline{Ca' Vignal 2, Strada Le Grazie 15, 37134 Verona, Italy}
  \centerline{}
}
\medskip

\centerline{\scshape Daniele Sepe}
\medskip
{\footnotesize
 \centerline{CAMGSD, Instituto Superior T\'ecnico}
   \centerline{Av. Rovisco Pais}
   \centerline{Lisboa, 1049-001, Portugal}
}

\bigskip

\section{Introduction}\label{sec:introduction}
Let $P$ be a smooth manifold, let $\pi,\phi$ be a smooth bivector and a closed 3-form on $P$ respectively, and denote by $\{.,.\}$ the almost Poisson bracket on $\mathrm{C}^{\infty}(P)$ induced by $\pi$. Say that $(\pi,\phi)$ defines a \emph{twisted Poisson structure} on $P$ if $\phi$ measures the failure of $\{.,.\}$ to satisfy the Jacobi identity; in this case the triple $\tpm$ is called a \emph{twisted Poisson manifold} (cf. Definition \ref{defn:twisted_poisson}). Interest in twisted Poisson manifolds originally stemmed from string theory (cf. \cite{severa_weinstein}), but such objects have become of independent mathematical interest as they retain many of the geometrical properties of Poisson manifolds (cf. \cite{bursztyn_crainic_weinstein_zhu,cattaneo_xu}). In particular, Poisson manifolds are to symplectic geometry as twisted Poisson manifolds are to \emph{almost} symplectic geometry, \textit{i.e.} the geometry of manifolds endowed with a non-degenerate 2-form. \\

Very recently, twisted Poisson manifolds have been linked to certain nonholonomic mechanical systems, most notably the Veselova system and the Chaplygin sphere (cf. \cite{balseiro_garcia_naranjo}). In recent years the interest in such systems and their applications has been growing, as these are examples of non-Hamiltonian integrable systems, \textit{i.e.} the dynamics is quasi-periodic along tori (or cylinders). Thus far there are two different approaches to the study of the integrability outside the (symplectic or Poisson) Hamiltonian framework: 

\begin{enumerate}[label=\arabic*), ref=\arabic*]
\item \label{item:28} reconstruction from reduced periodic (and recently quasi--periodic) dynamics (cf. \cite{ashwin_melbourne,bloch_zenkov,FGNS,FG07,hermans,zenkov});
\item \label{item:38} possible generalisations of the Liouville-Arnol'd Theorem or its non-commutative counterpart both in symplectic and Poisson geometry (cf. \cite{Bog,bates_cushman,CD,fasso,lgmv}). 
\end{enumerate} 

As for approach \ref{item:28}, many nonholonomic systems have a symmetry group which allows to consider dynamics on the reduced phase space; in many cases, the reduced dynamics is periodic or quasi--periodic along tori (or cylinders). When the reduced dynamics is periodic and if the symmetry group is compact, the reconstructed dynamics on the original phase space is quasi--periodic on tori of a certain dimension (at least in an open and dense set) and, thus, integrable; little is known if the symmetry group is not compact or the motion in the reduced phase space is quasi--periodic (cf. the forthcoming \cite{FGNS} for some results in this direction). In the case of the Veselova system or the Chaplygin sphere, \cite{balseiro_garcia_naranjo} proves that the dynamics on the reduced phase space can be formulated in terms of twisted Poisson structures. This begs the following intriguing question.

\begin{qn}\label{qn:complete_integrable}
  Are the reduced equations of motion for the Veselova system or the Chaplygin sphere \emph{completely integrable} with respect to the twisted Poisson structures constructed in \cite{balseiro_garcia_naranjo}?
\end{qn}

This would give a direct proof of the integrability of the reduced systems without a need to use what is known as time-reparametrisation. The above question naturally invites to study a generalisation of the Liouville-Arnol'd theorem for Hamiltonian integrable systems on twisted Poisson structures, which would give a \emph{local} description of such systems in terms of (generalised) action-angle coordinates, in line with approach \ref{item:38} above. The first step to understand this case is to consider the simplest possible scenario of a Hamiltonian integrable system defined on an almost symplectic manifold; this work was carried out in \cite{fasso_sansonetto}, where, under a crucial assumption on the integrals of motion (cf. Definitions \ref{defn:strongly_hamiltonian} and \ref{defn:ncias}), an analogue of the Liouville-Arnol'd theorem is proved (cf. Theorem \ref{thm:lma_almost}). \\

The aim of this paper is to develop the \emph{global} theory of non-commutative integrable Hamiltonian systems (NCIHS for short) on almost symplectic manifolds in the sense of \cite{fasso_sansonetto}, generalising results of \cite{daz_delz,dui}. The crucial starting point is that if $F=(f_1,\ldots,f_{2n-k}):\asm \to \R^{2n-k}$ is such a system then $P =F(M)$ inherits a twisted Poisson structure $(\pi,\phi)$ with respect to which $F:\asm \to \tpm$ is a \emph{twisted isotropic realisation}, \textit{i.e.} a twisted Poisson morphism whose fibres are isotropic submanifolds (cf. Lemma \ref{lemma:twisted_base} and Definition \ref{defn:symplectic_realisations_almost_poisson}). This generalises the correspondence between completely integrable Hamiltonian systems in the sense of \cite{mishchenko_fomenko,nek} and isotropic realisations of Poisson manifolds (cf. \cite{daz_delz}). As expected, all results that can be obtained in the Poisson context admit analogues in the twisted Poisson case; in particular, the following hold for any fixed twisted isotropic realisation $\ir$ in the sense of Definition \ref{defn:symplectic_realisations_almost_poisson} (cf. \cite{daz_delz,vaisman} for the corresponding statements for isotropic realisations):

\begin{itemize}
\item $\tpm$ is regular (cf. Definition \ref{defn:regular_twisted_Poisson} and Proposition \ref{prop:isotropic_implies_regular});
\item it is classified, up to smooth isomorphism, by two invariants: its \emph{period net} $\Lambda \to P$ (cf. Definition \ref{defn:period_net}) and its \emph{Chern class} $c \in \mathrm{H}^2(P;\mathcal{P}_{\Lambda})$, where $\mathcal{P}_{\Lambda}$ denotes the sheaf of sections of the period net (cf. Definition \ref{defn:cc});
\item the almost symplectic foliation $\mathcal{F}$ of $\tpm$ admits a transversally integral affine structure $\mathcal{A}$ which can be identified with the period net $\Lambda \to P$ (cf. Definition \ref{defn:tias}, Proposition \ref{prop:tias_equiv_period_net} and Corollary \ref{cor:tias_twisted_ir}).
\end{itemize}

The main result of this note is Theorem \ref{thm:main}, which generalises the main result of \cite{daz_delz}, gives a cohomological criterion to \emph{constructing} twisted isotropic realisations. Namely, given a regular twisted Poisson manifold $\tpm$ with a given transversally integral affine structure $\Lambda \to P$, a necessary and sufficient condition for $c \in \mathrm{H}^2(P;\mathcal{P}_{\Lambda})$ to be the Chern class of some twisted isotropic realisation of $\tpm$ is that 
$$ \mathcal{D}_{\Lambda}(c) = \xi_{\phi}, $$
\noindent
where $\mathcal{D}_{\Lambda}$ is the \emph{Dazord-Delzant} homomorphism associated to the transversally integral affine structure $\Lambda \to P$ (cf. \cite{daz_delz} and Section \ref{sec:affine-geometry}) and $\xi_{\phi}$ is the \emph{characteristic class} of $\tpm$ (cf. Definition \ref{defn:characteristic_class}). It is important to remark that transversally integral affine geometry plays a crucial role in the constructions associated to the proof of Theorem \ref{thm:main}, which is further indication of its importance in the study of (twisted) isotropic realisations and, more generally, of Hamiltonian integrable systems. \\

This paper is structured as follows. Section \ref{sec:twist-poiss-manif} recalls the basic properties of twisted Poisson manifolds with particular focus on the regular case. The notion of NCIHS on almost symplectic manifolds is introduced in Section \ref{sec:integr-almost-sympl}, along with a statement of the Liouville-Arnol'd theorem in this case (cf. \cite{fasso_sansonetto}). Section \ref{sec:relat-twist-poiss} proves the relation between twisted isotropic realisations and NCIHS, extending some results obtained in \cite{fasso_sansonetto}. The smooth classification of twisted isotropic realisations is carried out in Section \ref{sec:twist-isotr-real} using the theory of principal groupoid bundles (cf. \cite{mackenzie,rossi}); moreover, existence of local action-angle coordinates is reproved using intrinsic methods (cf. Proposition \ref{prop:almost_symplectic_pullback}). Section \ref{sec:affine-geometry} proves the existence of a transversally integral affine structure on the base of a twisted isotropic realisation. Such a structure is used to solve the construction problem of twisted isotropic realisations (cf. Section \ref{sec:constr-isotr-real} and Theorem \ref{thm:main}). Section \ref{sec:conclusion} contains further questions stemming from this work, which are going to be considered in future works. Throughout this note, there are two types of comments, labelled \textbf{Observation} and \textit{Remark} respectively; those with the former label are central to the problems studied in this paper, while those with the latter may be skipped at a first reading.

\section{Twisted Poisson manifolds}\label{sec:twist-poiss-manif}
In this section twisted Poisson manifolds are introduced and some of their basic properties are recalled in order to prepare the ground for the work in Section \ref{sec:twist-isotr-real} and to establish notation. Most of the material presented below can also be found in \cite{balseiro_garcia_naranjo,cattaneo_xu,severa_weinstein}. \\

Recall that an \emph{almost Poisson structure} on a smooth manifold $P$ is a bracket $\{.,.\}$ on $\mathrm{C}^{\infty}(P)$ which is skew-symmetric, bilinear and satisfies the Leibniz rule, \textit{i.e.} for all $h_1,h_2,h_3 \in \mathrm{C}^{\infty}(P)$, 
$$ \{h_1,h_2h_3\} = \{h_1,h_2\}h_3 + h_2\{h_1,h_3\}. $$
\noindent
Associated to an almost Poisson structure $\{.,.\}$ on a manifold $P$ is a bivector field $\pi \in \Gamma (\Lambda^2 \mathrm{T}P)$ uniquely defined by
$$ \pi(\de h_1, \de h_2) := \{h_1,h_2\} $$
\noindent 
and extended by $\mathrm{C}^{\infty}(P)$-linearity. Given a bivector field $\pi$ on $P$, denote by $\pi^{\#}: \cotan P \to \mathrm{T}P$ the map defined by 
$$ (\pi^{\#}(\alpha))(\beta) := \pi(\alpha,\beta) $$
\noindent
for all 1-forms $\alpha, \beta$ on $P$; by abuse of notation, let $\pi^{\#}$ also denote the induced map on all forms on $P$. The distribution $\mathrm{im} \; \pi^{\#}$ is called the \emph{characteristic distribution} of $\pi$.

\begin{defn}[\cite{severa_weinstein}]\label{defn:twisted_poisson}
  Let $P$ be a smooth manifold. A pair $(\pi, \phi)$, where $\pi$ is a bivector field and $\phi$ a closed $3$-form, is a \emph{twisted Poisson structure} if
  \begin{equation}
    \label{eq:1}
    \llbracket \pi,\pi \rrbracket=\frac{1}{2}\wedge^3 \pi^{\#}\phi,
  \end{equation}
  \noindent
  where $\llbracket .,. \rrbracket $ denotes the Schouten-Nijenhuis bracket. In this case, say that $\pi$ is a $\phi$-\emph{twisted} Poisson structure on $P$, that $\phi$ is a \emph{twisting form} of $\pi$, and that the triple $(P,\pi,\phi)$ is a \emph{twisted Poisson} manifold.
\end{defn}

\begin{obs}\label{rk:twisted_poisson}
  If $(\pi, \phi)$ is a twisted Poisson structure on $P$ and $\{.,.\}$ denotes the associated bracket on $\mathrm{C}^{\infty}(P)$, equation \eqref{eq:1} becomes 
  $$  \{\{h_1,h_2\},h_3\} + \text{ c.p. } = \phi(X_{h_1},X_{h_2},X_{h_3}), $$
  \noindent 
  where, for each $j$, $X_{h_j}$ denotes the Hamiltonian vector field of $h_j$, \textit{i.e.} $X_{h_j} = \pi^{\#}(\de h_j)$, and c.p. stands for cyclic permutation. In other words, the 3-form $\phi$ controls the failure of $\{.,.\}$ to satisfy the Jacobi identity. As in the context of Poisson manifolds, functions whose Hamiltonian vector fields vanish identically are called \emph{Casimirs} of $\tpm$.
\end{obs}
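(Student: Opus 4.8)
The plan is to read \eqref{eq:1} as a pointwise identity between two sections of $\Lambda^3 \mathrm{T}P$ and to test it against triples of exact $1$-forms. At every $p \in P$ the covectors $\de h|_p$, $h \in \mathrm{C}^{\infty}(P)$, span $\cotan_p P$, and both sides of \eqref{eq:1} are $\mathrm{C}^{\infty}(P)$-multilinear and totally skew in their arguments; hence the two trivector fields coincide if and only if they agree after being paired with $\de h_1 \wedge \de h_2 \wedge \de h_3$ for arbitrary $h_1,h_2,h_3 \in \mathrm{C}^{\infty}(P)$.

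I would first evaluate the left-hand side using the standard description of the Schouten--Nijenhuis bracket of a bivector with itself in terms of the induced bracket $\{.,.\}$: combining the graded Leibniz rule for $\llbracket .,. \rrbracket$ with $\llbracket \pi, h \rrbracket = -\pi^{\#}(\de h) = -X_h$ and antisymmetrising, one obtains that $\llbracket \pi,\pi\rrbracket(\de h_1, \de h_2, \de h_3)$ equals the Jacobiator $\{\{h_1,h_2\},h_3\} + \text{ c.p.}$ up to the fixed numerical constant determined by the normalisation of $\llbracket .,. \rrbracket$. For the right-hand side, by definition $\wedge^3 \pi^{\#}$ applies $\pi^{\#}$ in each slot, so that, using $\pi^{\#}(\de h_j) = X_{h_j}$, the trivector $\wedge^3 \pi^{\#}\phi$ paired with $\de h_1 \wedge \de h_2 \wedge \de h_3$ is $\phi(X_{h_1}, X_{h_2}, X_{h_3})$ up to a combinatorial constant. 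The factor $\tfrac12$ in \eqref{eq:1} is exactly the one making the two constants coincide; comparing the two computations yields that \eqref{eq:1} is equivalent to
$$ \{\{h_1,h_2\},h_3\} + \text{ c.p.} = \phi(X_{h_1}, X_{h_2}, X_{h_3}) \qquad \text{for all } h_1,h_2,h_3 \in \mathrm{C}^{\infty}(P), $$
which is the asserted reformulation (alternatively, one may simply verify this identity in a local coordinate chart, where it reduces to the classical formula for $\llbracket \pi,\pi\rrbracket$ in components). For the final remark, if $X_h \equiv 0$ then $\{h,g\} = \pi(\de h, \de g) = -\langle \de g, X_h \rangle \equiv 0$ for every $g \in \mathrm{C}^{\infty}(P)$, so such an $h$ Poisson-commutes with all functions, exactly as in the untwisted case; note that closedness of $\phi$ plays no role here, being only part of the standing definition of a twisting form.

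The only point requiring care is the bookkeeping of the numerical constants arising from the chosen normalisations of the Schouten--Nijenhuis bracket and of the operator $\wedge^3 \pi^{\#}$ acting on $3$-forms; there is no conceptual difficulty, and once those conventions are pinned down the equivalence follows from the multilinear reduction above.
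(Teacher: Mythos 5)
Your argument is correct and is exactly the intended justification: the paper states this as an Observation without proof, and the standard verification is precisely what you do, namely evaluate both sides of \eqref{eq:1} on $\de h_1 \wedge \de h_2 \wedge \de h_3$, use the classical identity expressing $\llbracket \pi,\pi\rrbracket$ on exact $1$-forms via the Jacobiator of $\{.,.\}$, and use $\pi^{\#}(\de h_j)=X_{h_j}$ on the right-hand side, with the factor $\tfrac12$ absorbed by the chosen normalisations. Your closing remark on Casimirs likewise matches the paper's (definitional) usage, so there is nothing to add beyond pinning down the Schouten-bracket convention, which the paper itself leaves implicit.
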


As stated in the Introduction, symplectic manifolds are to Poisson geometry as almost symplectic manifolds are to twisted Poisson geometry.

\begin{defn}\label{defn:almost_symplectic_manifolds}
  An \emph{almost symplectic} form on smooth manifold $M$ is a non-degenerate 2-form $\sigma$ on $M$. An \emph{almost symplectic manifold} is a pair $\asm$ consisting of a smooth manifold $M$ and an almost symplectic form $\sigma$ on $M$.
\end{defn}

\begin{obs}\label{rk:almost_symplectic_almost_complex}
  Let $\asm$ be an almost symplectic manifold and let $\pi_{\sigma}$ be bivector field induced by the almost Poisson bracket on $\mathrm{C}^{\infty}(M)$ defined by $\sigma$. By definition, $(\pi_{\sigma}, \de \sigma)$ is a twisted Poisson structure on $M$.
\end{obs}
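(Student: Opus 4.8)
The plan is to check the two conditions of Definition~\ref{defn:twisted_poisson} for the pair $(\pi_\sigma,\de\sigma)$. That $\de\sigma$ is a closed $3$-form is immediate from $\de^2=0$, so the substance is the twisting identity \eqref{eq:1}. By Observation~\ref{rk:twisted_poisson}, for the bivector $\pi_\sigma$ this identity is equivalent to
$$ \{\{f,g\},h\}+\{\{g,h\},f\}+\{\{h,f\},g\}=\de\sigma(X_f,X_g,X_h)\qquad\text{for all }f,g,h\in\mathrm{C}^{\infty}(M), $$
where $\{\cdot,\cdot\}$ is the bracket determined by $\sigma$ and $X_f:=\pi_\sigma^{\#}(\de f)$; I would prove this equality directly. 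The key preliminary remark is that non-degeneracy of $\sigma$ makes $\pi_\sigma^{\#}$ the inverse of $\sigma^{\flat}\colon\mathrm{T}M\to\cotan M$ (up to a sign fixed by conventions), so that $\iota_{X_f}\sigma=\de f$; consequently $\sigma(X_f,X_g)=\{f,g\}$ and $X_f(\psi)=-\{f,\psi\}$ for $\psi\in\mathrm{C}^{\infty}(M)$, which lets every term be re-expressed through $\sigma$ and the bracket.

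The computation expands the right-hand side using the Koszul formula for the exterior derivative of a $2$-form applied to the triple $(X_f,X_g,X_h)$. Its three ``directional-derivative'' terms become nested brackets via $\sigma(X_g,X_h)=\{g,h\}$ and $X_f(\{g,h\})=-\{f,\{g,h\}\}$. Its three ``bracket'' terms $\sigma([X_f,X_g],X_h)$ are dealt with using $\mathcal{L}_{X_f}\sigma=\iota_{X_f}\de\sigma$ (because $\iota_{X_f}\sigma=\de f$ is closed) together with the identity $\iota_{[X_f,X_g]}\sigma=\mathcal{L}_{X_f}\iota_{X_g}\sigma-\iota_{X_g}\mathcal{L}_{X_f}\sigma$, which yields
$$ \iota_{[X_f,X_g]}\sigma=-\de\{f,g\}-\iota_{X_g}\iota_{X_f}\de\sigma, $$
i.e. $[X_f,X_g]=-X_{\{f,g\}}-(\sigma^{\flat})^{-1}\bigl(\de\sigma(X_f,X_g,\cdot)\bigr)$. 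Thus each of the three bracket terms contributes a nested bracket plus a copy of $\de\sigma(X_f,X_g,X_h)$. Substituting everything back and summing the three cyclic contributions, the nested brackets reassemble (using skew-symmetry of $\{\cdot,\cdot\}$) into multiples of the Jacobiator while the correction terms add to a multiple of $\de\sigma(X_f,X_g,X_h)$; solving the resulting linear relation gives the displayed identity, hence \eqref{eq:1} with $\phi=\de\sigma$.

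I do not expect a genuine obstacle: the only delicate point is the consistent bookkeeping of signs and of the factor $\tfrac12$, which depends on the chosen conventions for $\pi_\sigma^{\#}$, for whether $\{f,g\}=\sigma(X_f,X_g)$ or $\sigma(X_g,X_f)$, and for the normalisation of the Schouten--Nijenhuis bracket in \eqref{eq:1}; with the conventions underlying Definition~\ref{defn:twisted_poisson} and Observation~\ref{rk:twisted_poisson} these all match up so that $\phi=\de\sigma$. Equivalently --- and this is the sense in which the statement holds ``by definition'' --- one may invoke the prototypical example of \cite{severa_weinstein}: every non-degenerate bivector field $\pi$, with associated $2$-form $\omega=(\pi^{\#})^{-1}$, satisfies \eqref{eq:1} with twisting form $\de\omega$; it then remains only to apply this with $\omega=\sigma$ and $\pi=\pi_\sigma$, and to note that $\de\sigma$ is closed.
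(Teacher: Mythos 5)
Your proposal is correct, but it is more elaborate than what the paper does: the paper offers no computation at all, treating the statement as immediate from the definition --- this is precisely the prototypical example of \cite{severa_weinstein}, where a non-degenerate bivector $\pi$ with inverse $2$-form $\omega$ satisfies \eqref{eq:1} with twisting form $\de\omega$ --- so your closing paragraph is exactly the paper's (implicit) argument. Your main route, the direct Koszul-formula verification of the Jacobiator identity of Observation~\ref{rk:twisted_poisson}, is a legitimate and more self-contained alternative: it buys an explicit re-derivation of the commutator relation \eqref{eq:3} for $\phi=\de\sigma$ and makes visible where non-degeneracy and $\mathcal{L}_{X_f}\sigma=\iota_{X_f}\de\sigma$ enter, at the cost of the sign and factor-$\tfrac12$ bookkeeping you rightly flag (the precise signs depend on whether $\{f,g\}=\sigma(X_f,X_g)$ or $\sigma(X_g,X_f)$, which the paper never pins down explicitly; with the convention matching Observation~\ref{rk:twisted_poisson} everything closes up as you say). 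One small point: invoking Observation~\ref{rk:twisted_poisson} to replace \eqref{eq:1} by the Jacobiator identity is fine here, since the paper itself states that equivalence, but note that this is what absorbs the Schouten-bracket normalisation, so your computation never needs to touch the factor $\tfrac12$ in \eqref{eq:1} directly.
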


A twisted Poisson structure $(\pi,\phi)$ on $P$ defines a \emph{Lie algebroid} structure on $\cotan P \to P$ with anchor map $\pi^{\#}: \cotan P \to \mathrm{T} P$ and Lie bracket defined by 
\begin{equation}
  \label{eq:2}
  [\alpha, \beta]_{\phi} := L_{\pi^{\#}\alpha}\beta - L_{\pi^{\#}\beta}\alpha - \de \pi(\alpha,\beta)+ \phi(\pi^{\#}\alpha,\pi^{\#}\beta,-),
\end{equation}
\noindent
for $\alpha, \beta$ 1-forms on $P$ (cf. \cite{cattaneo_xu,severa_weinstein}). Unlike the case of Poisson structures, Hamiltonian vector fields do not form a Lie subalgebra of all vector fields, since the above equation implies that, for all $h_1,h_2 \in \mathrm{C}^{\infty}(P)$, 
\begin{equation}
  \label{eq:3}
  [X_{h_1},X_{h_2}] = X_{\{h_1,h_2\}} + \pi^{\#}(\phi(X_{h_1},X_{h_2},-)).
\end{equation}
\noindent
Since the (possibly singular) distribution defined by the image of the anchor of a Lie algebroid is integrable in the sense of Frobenius-Stefan-Sussmann, it follows that the distribution $\mathrm{im} \; \pi^{\#}$ is integrable with (possibly singular) foliation $\mathcal{F}$. Let $O$ be a leaf of $\mathcal{F}$ and define a smooth 2-form $\tau_O \in \Omega^2(O)$ by 
$$ \tau_O(\pi^{\#}(\alpha),\pi^{\#}(\beta)) : = \pi(\alpha,\beta), $$
\noindent
where $\alpha, \beta$ are 1-forms on $P$ (recall that, for all $x \in O$, $\mathrm{T}_xO = \pi^{\#}_x (\mathrm{T}^*_x P)$ by definition). If $\iota :O \hookrightarrow P$ denotes the inclusion, then
$$ \iota^*\phi = \de \tau_O. $$
\noindent
Note that the 2-form $\tau_O$ defined above is, by construction, non-degenerate. Therefore, the leaves of $\mathcal{F}$ inherit almost symplectic forms from $\pi$ and $\mathcal{F}$ is called the \emph{almost symplectic foliation} associated to $\tpm$. 

\begin{remark}\label{rk:phi_not_unique}
  \mbox{}
  \begin{enumerate}[label=\arabic*), ref=\arabic*)]
  \item \label{item:26}   As noted in \cite{balseiro_garcia_naranjo} in the more general context of twisted Dirac structures, if $\pi$ is a bivector field on a manifold $P$ admitting a twisting form $\phi$ and if $\mathcal{F}$ denotes the almost symplectic foliation of $\tpm$, then $\phi'$ is another twisting form for $\pi$ if and only if $\phi' = \phi +\theta$, where $\theta$ is a closed 3-form on $P$ vanishing on the leaves of $\mathcal{F}$. For instance, any Poisson bivector $\pi$ on $P$ defines a $\phi$-twisted Poisson structure, where $\phi$ is any closed 3-form on $P$ vanishing on the leaves of the symplectic foliation of $(P,\pi)$;
  \item \label{item:27}  Fix a twisted Poisson manifold $\tpm$ and let $\phi'$ be another twisting form for $\pi$. Note that equation \eqref{eq:3} holds with $\phi'$ replacing $\phi$, since $\phi'-\phi$ vanishes along $\mathcal{F}$; on the other hand, for all 1-forms $\alpha, \beta$ on $P$, 
  $$ [\alpha, \beta]_{\phi'} - [\alpha, \beta]_{\phi} = (\phi'-\phi)(\pi^{\#}\alpha,\pi^{\#}\beta,-), $$
  \noindent
  where $[.,.]_{\phi'}, [.,.]_{\phi}$ are Lie brackets on $\cotan P \to P$ defined as in equation \eqref{eq:2}. In particular, the choice of twisting form for the bivector $\pi$ determines the Lie algebroid structure on $\cotan P \to P$; two choices of twisting forms may induce Lie algebroids which are distinct in the sense that one of them is integrable in the sense of \cite{crainic_fernandes_lie}, while the other is not (cf. Remark \ref{rk:complete_realisations_integrability}).
  \end{enumerate}
\end{remark}

In this note, the main focus is on twisted Poisson manifolds $\tpm$ whose characteristic distribution $\mathrm{im} \; \pi^{\#}$ has constant rank (cf. Proposition \ref{prop:isotropic_implies_regular}).

\begin{defn}\label{defn:regular_twisted_Poisson}
  A twisted Poisson manifold $\tpm$ is said to be \emph{regular} if its characteristic distribution $\mathrm{im} \; \pi^{\#}$ is of constant rank.
\end{defn}

\begin{example}\label{ex:almost_symplectic_twisted_poisson}
  Let $\pi$ be a bivector field on a manifold $P$ whose characteristic distribution $\mathrm{im} \; \pi^{\#}$ is integrable and of constant rank. Then there exists an exact 3-form $\de \tau \in \Omega^3(P)$ such that $(\pi,\de \tau)$ is a twisted Poisson structure on $P$ (cf. \cite{balseiro_garcia_naranjo}).
\end{example}

Just as in the case of regular Poisson manifolds, a regular twisted Poisson manifold $(P,\pi,\phi)$ has extra structure. For the purposes of this paper, the following properties of regular twisted Poisson manifolds are relevant:

\begin{enumerate}[label=(P\arabic*), ref=(P\arabic*)]
\item \label{item:11} There exists a globally defined smooth 2-form $\tau$ on $P$ which restricts to the almost symplectic form induced by $\pi$ on the leaves of $\mathcal{F}$ (cf. \cite{balseiro_garcia_naranjo} and Example \ref{ex:almost_symplectic_twisted_poisson} above). Note, however, that $\tau$ is not necessarily non-degenerate (for instance, $P$ may be odd dimensional), nor is it unique; in fact, $\tau$ is unique up to addition of a 2-form which vanishes on the leaves of $\mathcal{F}$;
\item \label{item:12} Since $\tpm$ is regular, the kernel of $\pi^{\#}: \cotan P \to \mathrm{T}P$ is a smooth subbundle of $\cotan P \to P$. By definition, for each $x \in P$, an element of $\ker \pi^{\#}(x)$ annihilates all vectors in $\mathrm{im} \;\pi^{\#}(x)$; thus, if $\nu^*_x \mathcal{F}$ denotes the conormal space to the almost symplectic foliation $\mathcal{F}$, then $\ker \pi^{\#}(x) \subset \nu^*_x \mathcal{F}$, but they are of equal dimension as vector spaces, so that $\ker \pi^{\#}(x) = \nu^*_x \mathcal{F}$. Locally, $\nu^*_x \mathcal{F}$ is spanned by the differentials of Casimir functions. Let $\phi'$ be any twisting form for $\pi$; the Lie algebroid structure defined by the twisted Poisson structure $(\pi,\phi')$ on $\cotan P \to P$ restricts to define a Lie algebroid structure on $\nu^* \mathcal{F} \to P$ with zero anchor and trivial Lie bracket (in fact, $\nu^*\mathcal{F} \to P$ is a bundle of abelian Lie algebras). Observe that this fact is intrinsic to the bivector $\pi$ and does not depend on the choice of twisting form $\phi$;
\item \label{item:4} Let $\tau \in \Omega^2(P)$ be a 2-form as in Property \ref{item:11}. Note that the bivector $\pi$ is also $\de \tau$-twisted, which implies that $\de \tau - \phi$ is a closed 3-form which vanishes on the leaves of $\mathcal{F}$. Let $\Omega^m_{\mathrm{rel}}(P;\mathcal{F})$ denote the vector space of differential $m$-forms on $P$ which vanish on the leaves of $\mathcal{F}$. Differential forms vanishing on the leaves of $\mathcal{F}$ form a subcomplex of the standard de Rham complex; thus define the \emph{relative} de Rham cohomology groups 
  $$\mathrm{H}^m_{\mathrm{rel}}(P;\mathcal{F}) = \frac{\ker \de :\Omega^m_{\mathrm{rel}}(P;\mathcal{F}) \to \Omega^{m+1}_{\mathrm{rel}}(P;\mathcal{F})}{\mathrm{im} \;\,\de :\Omega^{m-1}_{\mathrm{rel}}(P;\mathcal{F}) \to \Omega^{m}_{\mathrm{rel}}(P;\mathcal{F})} $$
  \noindent
  for all $m \in \mathbb{N}$ (cf. \cite{vaisman}). The form $\de \tau - \phi$ defines a cohomology class $\xi_{\phi}=[\de \tau - \phi] \in \mathrm{H}^3_{\mathrm{can}}(P;\mathcal{F})$. If $\tau' \in \Omega^2(P)$ is another 2-form constructed as in \ref{item:11}, then $\tau - \tau'$ vanishes on the leaves of $\mathcal{F}$, which implies that 
  $$ \de \tau' - \phi = \de \tau + \de (\tau' - \tau) - \phi. $$
  \noindent
  Therefore $[\de \tau' - \phi] = [\de \tau - \phi]$, which implies that $\xi_{\phi}$ is independent of the choice of 2-form $\tau$ constructed in \ref{item:11}. 
\end{enumerate}

\begin{defn}\label{defn:characteristic_class}
  Let $\tpm$ be a regular twisted Poisson manifold. The cohomology class $\xi_{\phi} \in \mathrm{H}^3_{\mathrm{can}}(P;\mathcal{F})$ is called the \emph{characteristic class} of $\tpm$.
\end{defn}

Note that if $\phi = 0$, the characteristic class $\xi_{\phi}$ corresponds to the characteristic class of the regular Poisson manifold $(P,\pi)$ as defined, for instance, in \cite{vaisman}. In general, $\xi_{\phi}$ depends upon the choice of twisting form $\phi$.

\section{Mechanical model}\label{sec:mechanical-model}
\subsection{Integrable almost-symplectic Hamiltonian systems}\label{sec:integr-almost-sympl}
In this section, the mechanical model associated to the geometric objects studied in Section \ref{sec:twist-isotr-real} is introduced, following \cite{fasso_sansonetto}. As mentioned in the Introduction, the systems described in \cite{fasso_sansonetto} lie in between non-commutative integrable Hamiltonian systems (cf. \cite{fasso,mishchenko_fomenko,nek}) and nonholonomic systems (cf. \cite{balseiro_garcia_naranjo,bates_cushman}). Most results are recalled below without proof (cf. \cite{fasso_sansonetto}); some are proved using different, more intrinsic techniques in order to introduce some notation and ideas used in Section \ref{sec:twist-isotr-real}. \\

Let $\asm$ be an almost symplectic manifold with induced almost-Poisson bracket $\{.,.\}_{\sigma}$, and, for $f \in \mathrm{C}^{\infty}(M)$, let $X_{f}$ denote the Hamiltonian vector field of $f$, namely the unique vector field satisfying 
$$ i(X_f)\sigma = \de f, $$
\noindent
where $i$ denotes the interior product. 

\begin{defn}\label{defn:strongly_hamiltonian}
  A Hamiltonian vector field $X_f$ on $\asm$ is said to be \emph{strongly Hamiltonian} if $L_{X_f} \sigma = 0$, where $L$ denotes the Lie derivative.
\end{defn}

\begin{obs}\label{rk:equivalent_cartan}
  \mbox{}
  \begin{enumerate}[label=\arabic*), ref=\arabic*]
  \item \label{item:30} In light of Cartan's formula, requiring that a Hamiltonian vector field $X_f$ is strongly Hamiltonian is equivalent to asking that $i(X_f) \de \sigma = 0$;
  \item \label{item:31} Denote by $\mathrm{C}^{\infty}_{\mathrm{str}}(M)$ the vector subspace of $\mathrm{C}^{\infty}(M)$ consisting of functions whose Hamiltonian vector fields are strongly Hamiltonian. The restriction of $\{.,.\}_{\sigma}$ to $\mathrm{C}^{\infty}_{\mathrm{str}}(M)$ is closed; in fact, $(\mathrm{C}^{\infty}_{\mathrm{str}}(M),\{.,.\}_{\sigma})$ is a Lie algebra.
  \end{enumerate}
\end{obs}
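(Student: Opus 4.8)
The plan is to deduce both parts directly from Cartan's formula together with the twisted Poisson identities already recorded for the twisted Poisson structure $(\pi_\sigma,\de\sigma)$ supplied by Observation \ref{rk:almost_symplectic_almost_complex}. For part \ref{item:30}, I would apply Cartan's magic formula to get $L_{X_f}\sigma = i(X_f)\de\sigma + \de\big(i(X_f)\sigma\big)$; since $i(X_f)\sigma = \de f$ by definition of the Hamiltonian vector field, the last term is $\de\de f = 0$, so $L_{X_f}\sigma = i(X_f)\de\sigma$, and the stated equivalence is immediate.

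For part \ref{item:31}, I would first observe that $\mathrm{C}^{\infty}_{\mathrm{str}}(M)$ is an $\R$-subspace of $\mathrm{C}^{\infty}(M)$: non-degeneracy of $\sigma$ makes $f \mapsto X_f$ linear, whence $L_{X_{af+bg}}\sigma = a\,L_{X_f}\sigma + b\,L_{X_g}\sigma$ vanishes when $f,g \in \mathrm{C}^{\infty}_{\mathrm{str}}(M)$. To prove closedness under $\{.,.\}_\sigma$, take $f,g \in \mathrm{C}^{\infty}_{\mathrm{str}}(M)$ and specialise equation \eqref{eq:3} to $(\pi_\sigma,\de\sigma)$:
\[
  [X_f,X_g] = X_{\{f,g\}_\sigma} + \pi_\sigma^{\#}\big(\de\sigma(X_f,X_g,-)\big).
\]
By part \ref{item:30}, $i(X_f)\de\sigma = 0$, so $\de\sigma(X_f,X_g,-) = i(X_g)\,i(X_f)\,\de\sigma = 0$ and hence $X_{\{f,g\}_\sigma} = [X_f,X_g]$. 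Since the Lie derivative along a commutator is the commutator of Lie derivatives, $L_{X_{\{f,g\}_\sigma}}\sigma = L_{X_f}\big(L_{X_g}\sigma\big) - L_{X_g}\big(L_{X_f}\sigma\big) = 0$, so $\{f,g\}_\sigma \in \mathrm{C}^{\infty}_{\mathrm{str}}(M)$. Finally, the restricted bracket is bilinear and skew-symmetric, and the Jacobi identity holds on $\mathrm{C}^{\infty}_{\mathrm{str}}(M)$: by Observation \ref{rk:twisted_poisson} applied to $(\pi_\sigma,\de\sigma)$, the Jacobiator of $f_1,f_2,f_3$ equals $\de\sigma(X_{f_1},X_{f_2},X_{f_3}) = i(X_{f_1})\de\sigma$ evaluated on $X_{f_2},X_{f_3}$, which vanishes for strongly Hamiltonian $f_i$. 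Thus $(\mathrm{C}^{\infty}_{\mathrm{str}}(M),\{.,.\}_\sigma)$ is a Lie algebra.

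The whole argument is routine; there is no genuine obstacle. The only point needing a little care is the sign/normalisation bookkeeping in the identity $[X_f,X_g] = X_{\{f,g\}_\sigma}$, since this is precisely where the strongly Hamiltonian hypothesis enters, through the vanishing of the correction term $\pi_\sigma^{\#}\big(\de\sigma(X_f,X_g,-)\big)$ in \eqref{eq:3}; once this identity is secured, both closedness and the Jacobi identity drop out of the fact that $L_{[X,Y]} = [L_X,L_Y]$ on $\sigma$ and of Observation \ref{rk:twisted_poisson}.
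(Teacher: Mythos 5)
Your argument is correct and is exactly the reasoning the paper intends (it states this Observation without proof): part \ref{item:30} is immediate from Cartan's formula and $\de(i(X_f)\sigma)=\de\de f=0$, and part \ref{item:31} follows from equation \eqref{eq:3} and Observation \ref{rk:twisted_poisson} applied to the twisted Poisson structure $(\pi_\sigma,\de\sigma)$, since the strongly Hamiltonian hypothesis kills both the correction term $\pi_\sigma^{\#}(\de\sigma(X_f,X_g,-))$ and the Jacobiator $\de\sigma(X_{f_1},X_{f_2},X_{f_3})$. No gaps; the sign conventions you flag are harmless because every identity you invoke is quoted directly from the paper.
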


The notion of strongly Hamiltonian vector fields is central to the definition of non-commutative Hamiltonian integrability in the almost symplectic category. In what follows, the notion of integrability given by Definition \ref{defn:ncias} generalises the work of Nehoro\v sev (cf. \cite{nek}), but it can easily be adapted to generalise integrability in the sense of Mi\v s\v cenko and Fomenko (cf. \cite{mishchenko_fomenko}). In fact, all results stated and proved below hold for both types of generalisations. 

\begin{defn}\label{defn:ncias}
  Let $\asm$ be a $2n$-dimensional almost symplectic manifold. A \emph{non-commutative integrable Hamiltonian system} without singularities (NCIHS for short) on $\asm$ is a map
  $$ F = (f_1,\ldots,f_{2n-k}) : \asm \to \R^{2n-k}, $$
  \noindent
  where $1\leq k \leq n$, which satisfies the following properties:
  \begin{enumerate}[label=(NC\arabic*), ref=(NC\arabic*)]
  \item \label{item:32} for all $j=1,\ldots,k$, $f_j \in \mathrm{C}^{\infty}_{\mathrm{str}}(M)$;
  \item \label{item:33} for all $j=1,\ldots,k$, $l=1,\ldots, 2n-k$, $\{f_j,f_l\}_{\sigma}=0$;
  \item \label{item:34} $F$ is a submersion, \textit{i.e.} $\de f_1 \wedge \ldots \wedge \de f_{2n-k} \neq 0$;
  \item \label{item:37} The fibres of $F$ are compact and connected.
  \end{enumerate}
\end{defn}

\begin{remark}\label{rk:ncias}
  \mbox{}
  \begin{enumerate}[label=\arabic*), ref=\arabic*]
  \item \label{item:35} When $\asm$ is a symplectic manifold, condition \ref{item:32} is automatically satisfied; in this case, Definition \ref{defn:ncias} recovers the notion of non-commutative integrable Hamiltonian systems without singularities with compact and connected fibres in the sense of \cite{nek} (cf. \cite{fasso} for an exhaustive introduction to the local theory of these systems);
  \item \label{item:6} Conditions \ref{item:34} and \ref{item:37} can be weakened. Doing so to the former allows for singular points, \textit{i.e.} points on $M$ where the integrals $f_1,\ldots, f_{2n-k}$ are not functionally independent. Generally this is done by controlling how `large' the singular set is, \textit{e.g.} the subset of $M$ consisting of regular points of $F$ may be stipulated to be dense. Condition \ref{item:37} can be relaxed by replacing the assumption on the fibres being compact with them being \emph{complete}, \textit{i.e.}  for $i=1,\ldots,k$, the Hamiltonian vector fields $X_{f_i}$ are complete. On the other hand, the condition on connectedness is very hard to dispense with in general (cf. \cite{pelayo_ratiu_vu_ngoc});
  \item \label{item:36} Condition \ref{item:32} is very strong and, in general, is not satisfied by nonholonomic systems. However, geometrically, this is not a novel hypothesis, since, as mentioned in \cite{fasso_sansonetto}, it is encountered in the context of reduction of almost-Dirac manifolds (cf. \cite{blankenstein_ratiu}). Moreover, as non-commutative integrable Hamiltonian systems are a local model for isotropic realisations of Poisson manifolds (cf. \cite{daz_delz}), these systems are local models for twisted isotropic realisations of twisted Poisson manifolds (cf. Definition \ref{defn:symplectic_realisations_almost_poisson}).
  \end{enumerate}
\end{remark}

For NCIHS on almost symplectic manifolds as defined above, a version of the Liouville-Arnol'd theorem can be proved.

\begin{theorem}[\cite{fasso_sansonetto}]\label{thm:lma_almost}
  Let $F : \asm \to \R^{2n-k}$ be a NCIHS on an almost symplectic manifold $\asm$. Then
  \begin{enumerate}[label=\roman*), ref=\roman*]
  \item \label{item:39} The fibres of $F$ are diffeomorphic to $\mathbb{T}^k$ and can be endowed with smoothly varying flat, torsion-free, complete connections;
  \item \label{item:41} There exist local action-angle variables, \textit{i.e.} for all $x \in F(M)$, there exists an open neighbourhood $U \subset F(M)$ containing $x$ and a diffeomorphism 
    $$ (\mathbf{a},\mathbf{b},\boldsymbol{\alpha}) : F^{-1}(U) \to V \times W \times \mathbb{T}^k, $$
    \noindent
    where $V \subset \R^k, W \subset \R^{2n-2k}$ are open sets, such that $\sigma|_{F^{-1}(U)}$ takes the form
    $$ \sum\limits_{l=1}^k \de a^l \wedge \de \alpha^l + \frac{1}{2}\sum\limits_{l,r=1}^k A_{lr} \de a^l \wedge \de a^r + \sum\limits_{l=1}^k\sum\limits_{u=1}^{2n-2k}B_{lu} \de a^l \wedge \de b^u + \frac{1}{2} \sum\limits_{u,v=1}^{2n-2k} C_{uv} \de b^u \wedge \de b^v, $$
    \noindent
    the matrices $A = A^{-T}, B, C=C^{-T}$ depend smoothly on $(a,b)$ and, if $k \neq n$, $C$ is everywhere non-singular. Coordinates $\mathbf{a}$ are called \emph{actions}, while $\boldsymbol{\alpha}$ are known as \emph{angles}.
  \end{enumerate}
\end{theorem}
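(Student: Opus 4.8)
The plan is to reduce this to the classical Liouville--Arnol'd picture by first exploiting the strongly Hamiltonian condition \ref{item:32} to produce a genuine symplectic structure along the fibres, and then run a foliated version of the standard argument. First I would fix a regular value and work over a neighbourhood $U$ of $x \in F(M)$; by \ref{item:34} the map $F$ is a submersion, so $F^{-1}(U)$ is foliated by the level sets, and the commuting relations \ref{item:33} show that the Hamiltonian vector fields $X_{f_1},\ldots,X_{f_k}$ are tangent to the fibres, pairwise commuting (here \ref{item:32} is essential: by Observation~\ref{rk:equivalent_cartan}\ref{item:31} and equation~\eqref{eq:3}, $[X_{f_i},X_{f_j}] = X_{\{f_i,f_j\}_\sigma} + \pi_\sigma^{\#}(\de\sigma(X_{f_i},X_{f_j},-))$, and $i(X_{f_i})\de\sigma = 0$ kills the anomalous term, so the bracket vanishes), and linearly independent at each point. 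Assuming completeness (which follows from compactness \ref{item:37}), these $k$ commuting complete vector fields generate a locally free $\R^k$-action on each fibre; since the fibres are compact and connected by \ref{item:37}, each is a quotient $\R^k/\Lambda_x$ for a lattice $\Lambda_x$ of full rank, hence diffeomorphic to $\mathbb{T}^k$, and the flat connection in \ref{item:39} is the one for which the $X_{f_i}$ are parallel; smoothness of the period lattice in $x$ gives the smooth variation. This proves \ref{item:39}.

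For \ref{item:41} the plan is: (a) choose $k$ functions among the $f_i$ whose Hamiltonian flows give the torus action, complete them to a full set of coordinates transverse to the orbits using the remaining $f_i$'s together with angle-type variables, and then (b) correct the naive ``angle'' functions to genuine angles by the standard action-integral construction, now carried out leaf-wise. The key new feature relative to the Poisson case of \cite{daz_delz} is that $\sigma$ is only almost symplectic, so I would use Proposition~\ref{prop:almost_symplectic_pullback} (the intrinsic reproof of local action-angle coordinates referenced in the introduction) to pull $\sigma$ back to a normal form on $V\times W\times\mathbb{T}^k$: because $X_{f_1},\ldots,X_{f_k}$ are strongly Hamiltonian and tangent to the fibres, $i(X_{f_i})\de\sigma = 0$, which forces $\de\sigma$ to have no component involving the angle directions, and this is precisely what makes the angle directions appear with constant (i.e. $\de a\wedge\de\alpha$) coefficients in the displayed formula while the anomalous $\de\sigma$ only contributes to the $A$, $B$, $C$ blocks which are allowed to depend on $(a,b)$. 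The actions $\mathbf{a}$ are obtained as the usual integrals $\oint_{\gamma_l}\lambda$ of a local primitive of $\sigma$ over the $l$-th generator of $H_1$ of the fibre (or, more invariantly, via the period lattice $\Lambda$ from part \ref{item:39}); one checks $X_{a^l}$ again generates the torus action, and the skew-symmetry conditions $A = A^{-T}$, $C = C^{-T}$ together with non-degeneracy of $\sigma$ (forcing $C$ invertible when $k\neq n$) are then linear-algebra bookkeeping on the matrix of $\sigma$ in the chosen frame.

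The main obstacle I expect is step (b): controlling the angle variables so that the mixed terms $\de a^l\wedge\de\alpha^l$ come out with \emph{exactly} the identity coefficient and no $\de\alpha\wedge\de\alpha$ or $\de b\wedge\de\alpha$ terms survive. In the symplectic case this is automatic from $\de\sigma = 0$; here one must show that the strongly Hamiltonian hypothesis \ref{item:32} is exactly strong enough to kill all angle-dependence of $\de\sigma$ and all $\de\sigma$-components transverse-to-angle $\wedge$ angle, so that $\sigma$ restricted to the angle directions is ``as symplectic as possible''. Concretely, writing $\sigma$ in the frame dual to $(\de a, \de b, X_{\alpha^l})$ and using $i(X_{f_i})\de\sigma = 0$ repeatedly (equivalently, that the $1$-forms $\de f_i$ are closed and $\sigma$-dual to the torus-generating fields), one shows the $(\alpha,\alpha)$- and $(\alpha,b)$-blocks of $\sigma$ are constant along each fibre and can be normalised away by a fibrewise-affine change of the angle coordinates — this is the technical heart, and it is where \cite{fasso_sansonetto} does the work that Proposition~\ref{prop:almost_symplectic_pullback} reproves intrinsically; I would simply invoke that proposition rather than redo the computation. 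Everything else (compactness $\Rightarrow$ completeness, lattice of full rank, smooth dependence on parameters) is a routine parametrised version of the classical argument.
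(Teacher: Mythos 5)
The paper does not actually prove Theorem \ref{thm:lma_almost}: it is recalled from \cite{fasso_sansonetto}, and the intrinsic re-derivation the paper has in mind is essentially the route you sketch. Your part (\ref{item:39}) argument is the standard one and is sound: condition \ref{item:32} together with equation \eqref{eq:3} kills the anomalous term in $[X_{f_i},X_{f_j}]$, and the commuting, complete, fibre-tangent frame gives the torus fibres and the flat connection, exactly as in Observation \ref{rk:in_local_coordinates} and Theorem \ref{thm:smooth_structure}; your part (\ref{item:41}) via Proposition \ref{prop:almost_symplectic_pullback} applied to the associated twisted isotropic realisation is also the paper's intended mechanism. Two corrections of detail. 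First, your primary recipe for the actions, $\oint_{\gamma_l}\lambda$ for a local primitive $\lambda$ of $\sigma$, is not available in this setting: $\de\sigma = F^*\phi$ need not vanish, so $\sigma$ admits no primitive even on $F^{-1}(U)$; the correct construction is the one in your parenthesis, namely taking local potentials of the closed $1$-forms spanning the period net (Corollary \ref{cor:period_net_closed} and the proof of Proposition \ref{prop:tias_equiv_period_net}). With that choice the "technical heart" you worry about dissolves, since $\psi_s^*\sigma = \omega_0 + \mathrm{pr}^*\circ s^*\sigma$ has no $\de\alpha$-terms beyond $\sum_l \de a^l\wedge\de\alpha^l$, so no fibrewise-affine renormalisation of the angles is needed. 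Second, invoking Proposition \ref{prop:almost_symplectic_pullback} presupposes that $F:\asm\to\tpm$ is a twisted isotropic realisation, i.e.\ Lemmata \ref{lemma:quasi_poisson_base} and \ref{lemma:twisted_base} and the basicness of $\de\sigma$; these are proved in the paper independently of Theorem \ref{thm:lma_almost} (so there is no circularity), but your plan should state this dependence explicitly rather than leave it implicit.
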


\subsection{Relation to twisted Poisson manifolds}\label{sec:relat-twist-poiss}
Associated to a NCIHS $F: \asm \to \R^{2n-k}$ on $\asm$ is a commutative diagram
\begin{equation*}
    \xymatrix{ \asm \ar[r]^-{F} \ar[dr]_-{F'=(f_1,\ldots,f_k)} & \R^{2n-k} \ar[d]^-{\mathrm{pr}_{\R^k}} \\
    & \R^k,}
\end{equation*}
\noindent
where $\mathrm{pr}_{\R^k}: \R^{2n-k} \to \R^k$ denotes projection onto the first $k$ components. Properties \ref{item:33} and \ref{item:34} imply that, for all $m \in M$, the Hamiltonian vector fields $X_{f_1}(m),\ldots,X_{f_k}(m)$ span $\ker D_mF$, while $X_{f_1}(m),\ldots,X_{f_{2n-k}}(m)$ span $\ker D_{m}F'$. Moreover the regular foliations defined by the integrable distributions 
\begin{equation*}
  D_1 = \mathrm{span} \langle X_{f_1},\ldots, X_{f_k} \rangle \qquad \text{and} \qquad D_2 = \mathrm{span} \langle X_{f_1},\ldots,X_{f_{2n-k}} \rangle
\end{equation*}
\noindent
are $\sigma$-orthogonal to one another. Observe that the fact that $\sigma$ is non-degenerate is very important for these assertions. Noticing that $\sigma$ vanishes when restricted to the fibres of $F$ by property \ref{item:33}, it follows that these fibres are \emph{isotropic}, while the fibres of $F'$ are \emph{coisotropic}, where the terms isotropic/coisotropic have the same meaning as in the symplectic context. Thus a NCIHS endows $\asm$ with an isotropic/coisotropic pair of foliations just as in the symplectic case (cf. \cite{daz_delz,fasso_sansonetto,fasso}). \\

Henceforth, assume the following unless otherwise stated:

\begin{enumerate}[label=(NC5), ref=(NC5)]
\item \label{item:40} $P = F(M)$ is a smooth manifold.
\end{enumerate}

Under hypotheses \ref{item:32} -- \ref{item:40}, a NCIHS $F: \asm \to P$ induces an almost Poisson structure on $P$, which makes $F$ into an almost Poisson morphism (cf. \cite{fasso_sansonetto} for a different proof using action-angle variables). 

\begin{lemma}\label{lemma:quasi_poisson_base}
  Let $F: \asm \to P$ be a NCIHS. There exists a unique almost Poisson structure $\{.,.\}_P$ on $P$ which makes $F$ into an almost Poisson morphism, \textit{i.e.} for all $h_1, h_2 \in \mathrm{C}^{\infty}(P)$,
  $$ \{F^* h_1, F^*h_2 \}_{\sigma} = F^* \{h_1,h_2\}_P. $$
\end{lemma}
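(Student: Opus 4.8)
The plan is to construct $\{.,.\}_P$ pointwise by pushing forward functions and their brackets, and to verify that the result is well-defined precisely because $F$ is a surjective submersion with connected fibres along which the Hamiltonian flows of the $f_j$ act transitively. First I would fix $h_1, h_2 \in \mathrm{C}^\infty(P)$ and define a candidate function on $P$ by declaring its value at a point $p \in P$ to be $\{F^*h_1, F^*h_2\}_\sigma(m)$ for any $m \in F^{-1}(p)$. To see this is independent of the choice of $m$, note that $F^{-1}(p)$ is connected, so it suffices to show that $\{F^*h_1, F^*h_2\}_\sigma$ is constant along the fibres of $F$; equivalently, that $X_{f_i}\bigl(\{F^*h_1,F^*h_2\}_\sigma\bigr) = 0$ for $i = 1,\dots,k$, since the $X_{f_i}$ span $\ker DF$ at every point. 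This is where property \ref{item:32} enters in an essential way: because $f_i \in \mathrm{C}^\infty_{\mathrm{str}}(M)$, the flow of $X_{f_i}$ preserves $\sigma$, hence preserves the almost Poisson bracket $\{.,.\}_\sigma$, so $X_{f_i}\{g_1, g_2\}_\sigma = \{X_{f_i} g_1, g_2\}_\sigma + \{g_1, X_{f_i} g_2\}_\sigma$ for all $g_1, g_2$. Applying this with $g_a = F^*h_a$ and using that $X_{f_i}(F^*h_a) = \de(F^*h_a)(X_{f_i}) = \de h_a(DF(X_{f_i})) = 0$ (as $X_{f_i} \in \ker DF$) gives the desired vanishing. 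Thus $\{h_1,h_2\}_P$ is a well-defined function on $P$; smoothness follows because $F$ admits local sections (being a submersion) and $\{h_1,h_2\}_P$ pulled back by such a section equals $\{F^*h_1,F^*h_2\}_\sigma$ composed with that section.

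Next I would check that $\{.,.\}_P$ is genuinely an almost Poisson bracket: bilinearity and skew-symmetry are immediate from the corresponding properties of $\{.,.\}_\sigma$, and the Leibniz rule follows by applying $F^*$ (a ring homomorphism) to a product $h_2 h_3$ and using the Leibniz rule for $\{.,.\}_\sigma$ together with injectivity of $F^*$ on functions (which holds since $F$ is surjective). The morphism identity $\{F^*h_1, F^*h_2\}_\sigma = F^*\{h_1,h_2\}_P$ holds by the very definition of $\{h_1,h_2\}_P$. Uniqueness is then clear: any almost Poisson structure $\{.,.\}_P'$ making $F$ a morphism must satisfy $F^*\{h_1,h_2\}_P' = \{F^*h_1,F^*h_2\}_\sigma = F^*\{h_1,h_2\}_P$ for all $h_1,h_2$, and injectivity of $F^*$ forces $\{.,.\}_P' = \{.,.\}_P$.

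The main obstacle is the well-definedness step — specifically, making sure that \ref{item:32} (rather than the weaker conditions on the remaining $f_j$) is what guarantees invariance of the bracket along the fibres, and that the fibres of $F$ (not $F'$) are exactly the leaves whose tangent spaces are spanned by $X_{f_1},\dots,X_{f_k}$. The excerpt already records both facts (that $X_{f_1}(m),\dots,X_{f_k}(m)$ span $\ker D_m F$, and that strongly Hamiltonian vector fields preserve $\sigma$), so the argument is short once these are invoked correctly; the only mild subtlety is passing from "constant along each connected fibre" to "descends to a smooth function on $P$", which uses \ref{item:37} (connectedness) and \ref{item:40} (that $P$ is a manifold) together with the local-section argument for smoothness. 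No appeal to action-angle coordinates or to the twisted Poisson structure is needed at this stage.
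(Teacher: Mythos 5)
Your proposal is correct and follows the same overall strategy as the paper's proof: define $\{h_1,h_2\}_P$ by showing $\{F^*h_1,F^*h_2\}_{\sigma}$ is basic, reduce basicness to the vanishing of $X_{f_j}\{F^*h_1,F^*h_2\}_{\sigma}$ for $j=1,\ldots,k$ (using that these vector fields span $\ker DF$ and the fibres are connected), and get uniqueness from injectivity of $F^*$. The only place you diverge is in how that vanishing is established. The paper stays inside the bracket formalism: it writes $X_{f_j}\{F^*h_1,F^*h_2\}_{\sigma}=\{\{F^*h_1,F^*h_2\}_{\sigma},f_j\}_{\sigma}$, invokes the $\de\sigma$-twisted Jacobi identity, kills the right-hand side $\de\sigma(X_{F^*h_1},X_{F^*h_2},X_{f_j})$ using $i(X_{f_j})\de\sigma=0$ (the Cartan-formula reformulation of \ref{item:32}, cf.\ Observation \ref{rk:equivalent_cartan}), and then kills the remaining terms using $\{F^*h_l,f_j\}_{\sigma}=0$. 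You instead use the Lie-derivative form of \ref{item:32} directly: $L_{X_{f_j}}\sigma=0$ implies $[X_{f_j},X_g]=X_{X_{f_j}g}$, hence $X_{f_j}$ is a derivation of $\{.,.\}_{\sigma}$, and the claim follows since $X_{f_j}(F^*h_a)=0$. Both arguments use exactly the strong Hamiltonianity of $f_1,\ldots,f_k$ and nothing more; yours is slightly more conceptual (it makes transparent that the fibre directions are infinitesimal automorphisms of $\sigma$), while the paper's version has the merit of exhibiting explicitly where the twisted Jacobi identity enters, a computation it reuses in Lemma \ref{lemma:twisted_base}. Your remarks on smoothness via local sections and on uniqueness are fine.
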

\begin{proof}
  It suffices to show that if $h_1, h_2 \in \mathrm{C}^{\infty}(P)$, then $\{F^*h_1,F^*h_2\}_{\sigma}$ is a basic function and, thus, it is equal to $F^* g_{12}$, for some $g_{12} \in \mathrm{C}^{\infty}(P)$. If this holds, then, for $h_1,h_2 \in \mathrm{C}^{\infty}(P)$, set
  \begin{equation}
    \label{eq:31}
    \{h_1,h_2\}_P :=g_{12};
  \end{equation}
  \noindent
  skew-symmetry, bilinearity and the Leibniz identity follow from the fact that $\{.,.\}_{\sigma}$ is an almost Poisson bracket. Note that if equation \eqref{eq:31} defines an almost Poisson bracket on $P$, it defines the unique such bracket making $F$ into an almost Poisson morphism. \\

Let $V \subset \mathrm{T} M \to M$ be the subbundle of `vertical' vector fields, \textit{i.e.} for all $m \in M$, $V_m =\ker D_mF$. Let $Y \in \Gamma(V)$ be a vertical vector field, \textit{i.e.} a smooth section of $V \to M$. The result is proved if, for all $h_1,h_2 \in \mathrm{C}^{\infty}(P)$, 
\begin{equation*}
  Y \{F^*h_1,F^*h_2\}_{\sigma} = 0.
\end{equation*}
\noindent
Fix such a vector field $Y$. Note that the Hamiltonian vector fields $X_{f_1},\ldots,X_{f_k}$ are vertical and, for all $m \in M$, form a frame of $V_m$ by properties \ref{item:33} and \ref{item:34}. In particular, for $j=1,\ldots,k$, there exist smooth functions $P_j:M \to \R$ such that, for all $m \in M$,
$$ Y(m) = \sum\limits_{j=1}^k P_j(m) X_{f_j}(m). $$
\noindent
Therefore, the result follows if it is shown that, for all $j=1,\ldots,k$, 
\begin{equation}
  \label{eq:35}
  X_{f_j} \{F^*h_1,F^*h_2\}_{\sigma} = 0.
\end{equation}
\noindent
Fix $j$ and note that
\begin{equation}
  \label{eq:32}
    X_{f_j} \{F^*h_1,F^*h_2\}_{\sigma} =  \sigma(X_{\{F^*h_1,F^*h_2\}_{\sigma}},X_{f_j}) =\{\{F^*h_1,F^*h_2\}_{\sigma},f_j\}_{\sigma}.
\end{equation}
\noindent
However, by definition of $\{.,.\}_{\sigma}$, 
\begin{equation}
  \label{eq:33}
  \{\{F^*h_1,F^*h_2\}_{\sigma},f_j\}_{\sigma} + \text{ c.p. } = \de \sigma (X_{F^*h_1},X_{F^*h_2},X_{f_j}).
\end{equation}
\noindent
The right hand side of equation \eqref{eq:33} vanishes, as the vector field $X_{f_j}$ is strongly Hamiltonian by property \ref{item:32}. Therefore, 
\begin{equation}
  \label{eq:34}
  \begin{split}
    X_{f_j} \{F^*h_1,F^*h_2\}_{\sigma} & = \{\{F^*h_1,F^*h_2\}_{\sigma},f_j\}_{\sigma} \\
  & = \{F^*h_1,\{F^*h_2,f_j\}_{\sigma}\}_{\sigma} + \{F^*h_2,\{f_j,F^*h_1\}_{\sigma}\}_{\sigma}. 
  \end{split}
\end{equation}
\noindent
Note that for $l=1,2$, 
$$ \{F^*h_l,f_j\}_{\sigma} = - X_{f_j}(F^*h_l) = 0, $$
\noindent
since $X_{f_j} \in \Gamma(V)$. Therefore, the right hand side of equation \eqref{eq:34} vanishes, proving that equation \eqref{eq:35} holds. Since $j$ is arbitrary, this completes the proof of the lemma.
\end{proof}

In fact, more structure on the almost Poisson structure $\{.,.\}_P$ can be inferred from the underlying NCIHS $F: \asm \to P$. Since 
$$ \ker D_{m}F = \mathrm{span} \langle X_{f_1}(m),\ldots,X_{f_k}(m) \rangle $$
\noindent
for all $m \in M$ and $X_{f_1},\ldots,X_{f_k}$ are strongly Hamiltonian by property \ref{item:32}, it follows that the 3-form $\de \sigma$ is \emph{basic}, \textit{i.e.} there exists a 3-form $\phi \in \Omega^3(P)$ such that
\begin{equation}
  \label{eq:36}
  \de \sigma = F^* \phi.
\end{equation}
\noindent
Note that equation \eqref{eq:36} implies that $\de \phi = 0$, since $\de \sigma$ is exact and $F$ is a surjective submersion. As $\{.,.\}_{\sigma}$ is a $\de \sigma$-twisted Poisson structure, it is natural to ask whether $\{.,.\}_P$ is $\phi$-twisted; this is the content of the next lemma, which, to the best of our knowledge, appears here for the first time. 

\begin{lemma}\label{lemma:twisted_base}
  The bracket $\{.,.\}_P$ is $\phi$-twisted Poisson.
\end{lemma}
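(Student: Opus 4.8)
The plan is to verify the twisted Poisson identity on $P$ in its bracket form, namely
$$ \{\{h_1,h_2\}_P,h_3\}_P + \text{ c.p. } = \phi(X_{h_1}^P,X_{h_2}^P,X_{h_3}^P) $$
for all $h_1,h_2,h_3 \in \mathrm{C}^\infty(P)$, where $X_{h_i}^P$ denotes the Hamiltonian vector field of $h_i$ with respect to $\{.,.\}_P$, and then to note that this bracket identity is equivalent to equation \eqref{eq:1} for the pair $(\pi_P,\phi)$ (cf. Observation \ref{rk:twisted_poisson}). Since $F$ is a surjective submersion, $F^*$ is injective on functions, so it suffices to prove the pulled-back identity on $M$. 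The natural route is to push everything through $F^*$ using the almost Poisson morphism property of Lemma \ref{lemma:quasi_poisson_base} together with the fact that $(M,\sigma)$ is itself $\de\sigma$-twisted Poisson, and then use $\de\sigma = F^*\phi$ from \eqref{eq:36}.

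The key steps, in order, are as follows. First, I would record that $F^*\{h_i,h_j\}_P = \{F^*h_i,F^*h_j\}_\sigma$ by Lemma \ref{lemma:quasi_poisson_base}, and apply this twice to obtain
$$ F^*\left(\{\{h_1,h_2\}_P,h_3\}_P + \text{ c.p. }\right) = \{\{F^*h_1,F^*h_2\}_\sigma,F^*h_3\}_\sigma + \text{ c.p. } $$
Second, since $\{.,.\}_\sigma$ is $\de\sigma$-twisted Poisson (Observation \ref{rk:almost_symplectic_almost_complex}), the right-hand side equals $\de\sigma(X_{F^*h_1},X_{F^*h_2},X_{F^*h_3})$, where $X_{F^*h_i}$ is the $\sigma$-Hamiltonian vector field. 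Third, using $\de\sigma = F^*\phi$ this becomes $(F^*\phi)(X_{F^*h_1},X_{F^*h_2},X_{F^*h_3})$. Fourth — and this is the crux — I must relate $X_{F^*h_i}$ to $X_{h_i}^P$: the point is that $F$ intertwines the Hamiltonian vector fields, i.e. $DF \circ X_{F^*h_i} = X_{h_i}^P \circ F$, which is exactly the statement that $F$ is an almost Poisson morphism together with the fact that $F$ is a submersion (one checks $\de(F^*h_i) = i(X)F^*\sigma$-type relations push forward correctly; concretely, for any $g\in\mathrm{C}^\infty(P)$, $X_{F^*h_i}(F^*g) = \{F^*h_i,F^*g\}_\sigma = F^*\{h_i,g\}_P = F^*(X_{h_i}^P g)$, which says precisely that $X_{F^*h_i}$ is $F$-related to $X_{h_i}^P$). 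Given this, $(F^*\phi)(X_{F^*h_1},X_{F^*h_2},X_{F^*h_3}) = F^*\left(\phi(X_{h_1}^P,X_{h_2}^P,X_{h_3}^P)\right)$ by naturality of pullback of forms under $F$-related vector fields. Combining, $F^*$ of both sides of the desired identity agree, and injectivity of $F^*$ finishes the proof.

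I expect the main obstacle to be the fourth step, the $F$-relatedness of Hamiltonian vector fields. It is not quite formal because Hamiltonian vector fields here are defined via the non-degenerate form $\sigma$ on $M$ but via the (possibly degenerate, lower-rank) bivector $\pi_P$ on $P$, so one cannot simply invoke a symplectomorphism-style argument; the cleanest way is the computation above showing $X_{F^*h_i}(F^*g) = F^*(X_{h_i}^P g)$ for all $g$, which determines the $F$-projection of $X_{F^*h_i}$ since $F$ is a submersion and basic functions separate directions transverse to the fibres — I should also remark that $X_{F^*h_i}$ need not be vertical-plus-basic in any canonical splitting, but that is irrelevant since only its $DF$-image enters $\phi(X_{h_1}^P,X_{h_2}^P,X_{h_3}^P)$ after pullback, and $\phi$ is a form on $P$ so only the projected vectors matter. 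A secondary, purely bookkeeping point is to make sure the equivalence between the Schouten-bracket form \eqref{eq:1} and the Jacobiator form of the identity is invoked in the same direction as in Observation \ref{rk:twisted_poisson}; once the Jacobiator identity is established on $P$, \eqref{eq:1} for $(\pi_P,\phi)$ follows, so $\{.,.\}_P$ is $\phi$-twisted Poisson as claimed.
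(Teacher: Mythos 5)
Your proposal is correct and follows essentially the same route as the paper's proof: pull back the Jacobiator identity via the almost Poisson morphism property of Lemma \ref{lemma:quasi_poisson_base}, use the $\de\sigma$-twisted identity on $M$ together with $\de\sigma = F^*\phi$, invoke the $F$-relatedness $D_mF(X_{F^*h_j}) = X_{h_j}(F(m))$, and conclude by injectivity of $F^*$ for the surjective submersion $F$. The only difference is cosmetic: you prove the $F$-relatedness explicitly via $X_{F^*h_j}(F^*g) = F^*(X_{h_j}g)$, whereas the paper cites it as the standard fact for (almost) Poisson morphisms.
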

\begin{proof}
  The result is proved if, for all $h_1,h_2,h_3 \in \mathrm{C}^{\infty}(P)$, the following equality holds
  \begin{equation}
    \label{eq:39}
    \{\{h_1,h_2\}_P,h_3\}_P + \text{ c.p. } = \phi(X_{h_1},X_{h_2},X_{h_3}),
  \end{equation}
  \noindent
  where, for $j=1,2,3$, $X_{h_j}$ denotes the Hamiltonian vector field of $h_j$ with respect to the almost Poisson bracket $\{.,.\}_P$. Fix $h_1,h_2,h_3$ as above and note that, by definition,
  \begin{equation}
    \label{eq:37}
    \{\{F^*h_1,F^*h_2\}_{\sigma},F^*h_3\}_{\sigma} + \text{ c.p. } = \de \sigma (X_{F^*h_1},X_{F^*h_2},X_{F^*h_3}).
  \end{equation}
  \noindent
  Recall that $\de \sigma = F^* \phi$, so that, for all $m\in M$, the right hand side of equation \eqref{eq:37} becomes
  \begin{equation*}
    \begin{split}
      \de \sigma_m (X_{F^*h_1}(m),X_{F^*h_2}(m) &,X_{F^*h_3}(m)) = (F^* \phi)_m (X_{F^*h_1}(m),X_{F^*h_2}(m),X_{F^*h_3}(m)) \\
      &=\phi_{F(m)}(D_mF (X_{F^*h_1}), D_mF(X_{F^*h_2}),D_mF (X_{F^*h_3})).
    \end{split}
  \end{equation*}
  \noindent
  Just as for Poisson morphisms, the fact that $F$ is an almost Poisson morphism implies that for all $j=1,2,3$ and for $m \in M$,
  $$ D_m F(X_{F^*h_j}) = X_{h_j} (F(m)) $$
  \noindent
  (cf. \cite{marsden_ratiu}). In particular, it follows that
  \begin{equation*}
    \phi_{F(m)}(D_mF (X_{F^*h_1}), D_mF(X_{F^*h_2}),D_mF (X_{F^*h_3})) = F^*(\phi(X_{h_1},X_{h_2},X_{h_3}))(m).
  \end{equation*}
  \noindent
  Therefore the right hand side of equation \eqref{eq:37} equals $F^*(\phi(X_{h_1},X_{h_2},X_{h_3}))$. On the other hand, the surjective submersion $F: (M,\{.,.\}_{\sigma}) \to (P,\{.,.\}_P)$ is an almost Poisson morphism, so that, for all $j,l$, 
  $$ \{F^*h_j,F^*h_l\}_{\sigma} = F^*\{h_j,h_l\}_P; $$
  \noindent
  applying the above identity several times to the left hand side of equation \eqref{eq:37}, obtain that
  \begin{equation*}
    \{\{F^*h_1,F^*h_2\}_{\sigma},F^*h_3\}_{\sigma} + \text{ c.p. } = F^*(\{\{h_1,h_2\}_P,h_3\}_P + \text{ c.p.}).
  \end{equation*}
  \noindent
  Thus,
  \begin{equation*}
    F^*(\{\{h_1,h_2\}_P,h_3\}_P + \text{ c.p.}) = F^*(\phi(X_{h_1},X_{h_2},X_{h_3}));
  \end{equation*}
  \noindent
  since $F$ is a surjective submersion, it follows that equation \eqref{eq:39} holds. This completes the proof of the lemma.
\end{proof}

\begin{remark}\label{rk:twisted_base_coordinates}
  An alternative, equivalent proof of the above result can be obtained by using action-angle variables as in Proposition 6 of \cite{fasso_sansonetto} and by noticing that condition \ref{item:32} implies that $\de \sigma$ is a basic form.
\end{remark}

\section{Twisted isotropic realisations}\label{sec:twist-isotr-real}
In this section, twisted isotropic realisations (with compact and connected fibres) of twisted Poisson manifolds are introduced (cf. Section \ref{sec:defin-basic-prop}) and their topological (in fact, smooth) invariants are constructed, thus yielding a smooth classification of these objects (cf. Section \ref{sec:smooth-constructions}).

\subsection{Definition and basic properties}\label{sec:defin-basic-prop}
\begin{defn}[\cite{bursztyn_crainic_weinstein_zhu}]\label{defn:symplectic_realisations_almost_poisson}
  A \emph{twisted isotropic realisation} (with compact fibres) of a twisted Poisson manifold $\tpm$ is a surjective submersion $p: \asm \to \tpm$ from an almost symplectic manifold onto $P$ which satisfies the following properties:
  \begin{enumerate}[label=(IR\arabic*), ref=(IR\arabic*)]
  \item \label{item:13} $p$ is an almost Poisson morphism, \textit{i.e.} for all $h_1, h_2 \in \mathrm{C}^{\infty}(P)$,
    $$ \{p^*h_1,p^*h_2\}_{\sigma} = p^*\{h_1,h_2\}, $$
    \noindent
    where $\{.,.\}_{\sigma}, \{.,.\}$ are the brackets on $\mathrm{C}^{\infty}(M), \mathrm{C}^{\infty}(P)$ induced by $\sigma$ and $\pi$ respectively;
  \item \label{item:42} $ p^*\phi = \de \sigma; $
  \item \label{item:5} for all $x \in P$, $p^{-1}(x)$ is an isotropic submanifold of $\asm$, \textit{i.e.} $\sigma|_{p^{-1}(x)} \equiv 0$;
  \item \label{item:9} The fibres of $F$ are compact and connected.
  \end{enumerate}
\end{defn}

\begin{obs}\label{obs:relation_ncihs_twisted_isotropic}
  Lemmata \ref{lemma:quasi_poisson_base} and \ref{lemma:twisted_base} of Section \ref{sec:relat-twist-poiss} prove how properties \ref{item:32} -- \ref{item:40} of a NCIHS $F: \asm \to P \subset \R^{2n-k}$ imply that $F : \asm \to \tpm$ is a twisted isotropic realisation, where $\pi$ is the bivector field associated to the almost Poisson bracket $\{.,.\}_P$ on $P$ constructed in Lemma \ref{lemma:quasi_poisson_base}. This generalises the correspondence between NCIHS on symplectic manifolds and isotropic realisations of Poisson manifolds to the almost symplectic category.
\end{obs}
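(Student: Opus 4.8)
The statement is, in effect, a packaging of the two preceding lemmata together with the bookkeeping of Definition \ref{defn:symplectic_realisations_almost_poisson}, so the plan is to check one at a time that $F \colon \asm \to \tpm$ satisfies each requirement there, where $\pi$ is the bivector field of the bracket $\{.,.\}_P$ supplied by Lemma \ref{lemma:quasi_poisson_base} and $\phi$ is the closed $3$-form determined by equation \eqref{eq:36}. First I would record the ambient data: $F$ is a surjective submersion onto a smooth manifold, being a submersion by \ref{item:34}, surjective onto $P = F(M)$ by construction, and $P$ smooth by \ref{item:40}; and $\tpm$ is genuinely a twisted Poisson manifold, since $\de\sigma$ is basic for $F$ — here \ref{item:32} is essential, as it gives $i(X_{f_j})\de\sigma = 0$ and hence $L_{X_{f_j}}\de\sigma = 0$, while $\ker D_mF = \mathrm{span}\langle X_{f_1}(m),\dots,X_{f_k}(m)\rangle$ by \ref{item:33} and \ref{item:34} — so that $\phi$ with $F^*\phi = \de\sigma$ exists, is closed because $F$ is a surjective submersion and $\de\sigma$ is closed, and makes $(\pi,\phi)$ a twisted Poisson structure by Lemma \ref{lemma:twisted_base}.

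Given this, the four conditions of Definition \ref{defn:symplectic_realisations_almost_poisson} are immediate. Condition \ref{item:13} is exactly Lemma \ref{lemma:quasi_poisson_base}. Condition \ref{item:42} is the defining relation \eqref{eq:36} for $\phi$. Condition \ref{item:9} is hypothesis \ref{item:37}. For condition \ref{item:5}, fix $x \in P$ and $m \in F^{-1}(x)$; then $\mathrm{T}_m F^{-1}(x) = \ker D_m F$ is spanned by $X_{f_1}(m),\dots,X_{f_k}(m)$, so $\sigma|_{F^{-1}(x)}$ is determined by the values $\sigma(X_{f_i},X_{f_j}) = \pm\{f_i,f_j\}_{\sigma}$ for $1 \le i,j \le k$, and these vanish by \ref{item:33}; hence $\sigma|_{F^{-1}(x)} \equiv 0$ and the fibre is isotropic.

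I do not expect any serious obstacle: all the geometric content has already been extracted in Lemmata \ref{lemma:quasi_poisson_base} and \ref{lemma:twisted_base}. The one point warranting care is the descent of $\de\sigma$ on $M$ to the closed $3$-form $\phi$ on $P$, which rests on hypothesis \ref{item:32}: strong Hamiltonianity of $f_1,\dots,f_k$ is precisely what makes $\de\sigma$ basic for $F$, and without it neither the twisted Poisson structure on $P$ nor the assertion that $F$ is a \emph{twisted} isotropic realisation would be available.
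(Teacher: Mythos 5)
Your proposal is correct and follows the paper's own route: the Observation is precisely the assembly of Lemma \ref{lemma:quasi_poisson_base} (condition \ref{item:13}), the basicness of $\de\sigma$ via \ref{item:32} giving equation \eqref{eq:36} (condition \ref{item:42}) together with Lemma \ref{lemma:twisted_base}, the isotropy of the fibres from \ref{item:33} (condition \ref{item:5}), and hypothesis \ref{item:37} (condition \ref{item:9}). Your explicit verification of \ref{item:5} via $\sigma(X_{f_i},X_{f_j})=\pm\{f_i,f_j\}_{\sigma}=0$ is just a slightly more detailed rendering of the remark already made in Section \ref{sec:relat-twist-poiss}.
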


\begin{remark}\label{example:poisson}
  \mbox{}
  \begin{enumerate}[label=\arabic*), ref=\arabic*]
  \item \label{item:2} Given a twisted symplectic realisation $p: \asm \to \tpm$, suppose that $\asm$ is a symplectic manifold. Condition \ref{item:42} implies that $p^*\phi = 0$; since $p$ is a surjective submersion, $\phi = 0$, which implies that the bivector field $\pi$ defines an honest Poisson structure on $P$. This recovers the notion of isotropic realisations of Poisson manifolds (cf. \cite{daz_delz});
  \item \label{item:1} In general, property \ref{item:9} is not included in the definition of (twisted) isotropic realisations of (twisted) Poisson manifolds. However, most classification results in the literature on isotropic realisations of Poisson manifolds assume connectedness and completeness of the fibres (cf. Remark \ref{rk:ncias}.\ref{item:6} for an analogous consideration regarding NCIHS).
  \end{enumerate}
\end{remark}

\begin{example}\label{exm:regular_twisted_poisson_admit_wtir}
  Let $\pi$ be a bivector on $P$ whose characteristic foliation $\mathcal{F}$ is integrable in the sense of Frobenius. Example \ref{ex:almost_symplectic_twisted_poisson} guarantees the existence of a globally defined 2-form $\tau \in \Omega^2(P)$ such that $\de \tau$ is a twisting form for $\pi$. Let $\mathrm{pr}:\nu^* \mathcal{F} \to P$ denote the conormal bundle to $\mathcal{F}$ and let $\jmath : \nu^* \mathcal{F} \hookrightarrow \cotan P$ denote the natural inclusion. Consider the 2-form
  $$ \sigma_0 = \jmath^* \Omega_{\mathrm{can}} + \mathrm{pr}^* \tau, $$
  \noindent
  where $\Omega_{\mathrm{can}}$ denotes the canonical symplectic form on $\cotan P$. Then
  $$\mathrm{pr} : (\nu^* \mathcal{F},\sigma_0) \to (P,\pi,\de \tau) $$
  \noindent
  is a twisted isotropic realisation with non-compact fibres.
\end{example}

The following proposition proves that only regular twisted Poisson manifolds admit twisted isotropic realisations, generalising the analogous statement in the Poisson category (cf. Proposition 8.10 in \cite{vaisman}). Henceforth, given a twisted isotropic realisation $\ir$, set $\dim M = 2n$ and $\dim P = 2n -k$ unless otherwise stated.

\begin{proposition}\label{prop:isotropic_implies_regular}
  Let $\ir$ be a twisted isotropic realisation of a twisted Poisson manifold $\tpm$. Then $\tpm$ is necessarily regular with corank $k$.
\end{proposition}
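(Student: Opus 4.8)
The plan is to show that the characteristic distribution $\mathrm{im}\,\pi^{\#}$ has constant rank equal to $k = \dim M - \dim P$ by pulling everything back along the surjective submersion $p$ and using non-degeneracy of $\sigma$. First I would fix $x \in P$ and $m \in p^{-1}(x)$, and analyse the linear algebra of $D_m p : \mathrm{T}_m M \to \mathrm{T}_x P$ together with its transpose $(D_m p)^* : \mathrm{T}^*_x P \to \mathrm{T}^*_m M$. The key point is the compatibility relation between $\pi^{\#}$ and the (invertible) map $\sigma^{\flat}: \mathrm{T}_m M \to \mathrm{T}^*_m M$ induced by $\sigma$: since $p$ is an almost Poisson morphism (property \ref{item:13}), one has $D_m p \circ \pi_\sigma^{\#} \circ (D_m p)^* = \pi^{\#}_x$ as maps $\mathrm{T}^*_x P \to \mathrm{T}_x P$, where $\pi_\sigma$ is the bivector of $\sigma$; this is the pointwise form of the identity $\{p^* h_1, p^* h_2\}_\sigma = p^*\{h_1,h_2\}$, and I would verify it by evaluating on differentials of functions. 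Because $\pi_\sigma^{\#} = (\sigma^{\flat})^{-1}$ is an isomorphism, the rank of $\pi^{\#}_x$ is controlled entirely by the rank of $(D_m p)^*$, i.e.\ by $\dim \mathrm{im}(D_m p)^* = \dim \mathrm{T}^*_x P = \dim P$, \emph{modulo} the kernel picked up from $(\sigma^\flat)^{-1}$ landing in $\ker D_m p$.

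The next step is to pin down that correction term using the isotropy hypothesis \ref{item:5}. Isotropy says $\sigma$ vanishes on $\ker D_m p = \mathrm{T}_m(p^{-1}(x))$, equivalently $\sigma^{\flat}(\ker D_m p) \subseteq (\ker D_m p)^{\circ} = \mathrm{im}\,(D_m p)^*$ (the annihilator), and by a dimension count — $\dim \ker D_m p = 2n - (2n-k) = k$ and $\dim \mathrm{im}(D_m p)^* = 2n-k$ while $\dim(\ker D_m p)^\circ = 2n - k$ — one gets $\sigma^{\flat}(\ker D_m p) \subseteq \mathrm{im}(D_m p)^*$. Dually, $\pi_\sigma^{\#}\big(\mathrm{im}(D_m p)^*\big) \supseteq \ker D_m p$, and in fact a careful count shows $\pi_\sigma^{\#}\big(\mathrm{im}(D_m p)^*\big) \cap \ker D_m p = \ker D_m p$, i.e.\ all of it: the isotropic subspace is exactly the part of $\pi_\sigma^{\#}(\mathrm{im}(D_m p)^*)$ that $D_m p$ kills. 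Therefore $D_m p$ maps $\pi_\sigma^{\#}(\mathrm{im}(D_m p)^*)$ onto $\mathrm{im}\,\pi^{\#}_x$ with kernel exactly $\ker D_m p$ of dimension $k$, and since $\pi_\sigma^\#$ is an isomorphism sending the $(2n-k)$-dimensional space $\mathrm{im}(D_m p)^*$ to a $(2n-k)$-dimensional space, we conclude
\[
\mathrm{rank}\,\pi^{\#}_x = (2n-k) - k = 2n - 2k \quad\text{— wait, this needs rechecking;}
\]
the correct bookkeeping is $\mathrm{rank}\,\pi^{\#}_x = \dim \mathrm{T}_x P - (\text{corank}) $, and I would instead argue directly that $\ker \pi^{\#}_x = \{\alpha \in \mathrm{T}^*_x P : \pi_\sigma^{\#}((D_m p)^*\alpha) \in \ker D_m p\}$ has dimension exactly $k$, using that $(D_m p)^*$ is injective and that the isotropic fibre has dimension $k$. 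Hence $\mathrm{rank}\,\pi^{\#}_x = (2n-k) - k$ is independent of $x$; since this holds at every point, $\tpm$ is regular, and the corank is $k$ as claimed.

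I expect the main obstacle to be the careful identification of $\ker \pi^{\#}_x$ — in particular, proving that the coisotropic counterpart of the fibre (the image $D_m p$ of the $\sigma$-orthogonal to $\ker D_m p$) has the expected dimension, and that no "extra" kernel appears beyond the contribution forced by the $k$-dimensional isotropic fibre. This amounts to an orthogonality argument for the pair of subspaces $\ker D_m p \subseteq (\ker D_m p)^{\perp_\sigma}$ entirely analogous to the symplectic discussion already given in Section \ref{sec:relat-twist-poiss} (the isotropic/coisotropic pair of foliations), and the non-degeneracy of $\sigma$ is what makes the dimension count rigid. Once that linear-algebra lemma is in place, regularity is immediate because the rank formula produced is a constant depending only on $n$ and $k$, and surjectivity of $p$ guarantees every point of $P$ is hit.
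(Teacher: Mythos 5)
Your argument is correct and is essentially the paper's own proof in dual form: the pointwise identity $D_mp\circ\pi_{\sigma}^{\#}\circ(D_mp)^{*}=\pi^{\#}_x$ coming from \ref{item:13}, the non-degeneracy of $\sigma$, and the isotropy inclusion $\sigma^{\flat}(\ker D_mp)\subseteq (\ker D_mp)^{\circ}=\mathrm{im}\,(D_mp)^{*}$ from \ref{item:5} are exactly the ingredients the paper uses (there phrased via the Hamiltonian vector fields $X_{p^{*}x^{j}}$ spanning the $\sigma$-orthogonal of $\ker D_mp$), and your kernel count $\dim\ker\pi^{\#}_x=k$ gives rank $2n-2k$ at every point, with surjectivity of $p$ yielding regularity. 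Note only that your opening claim that the rank equals $k=\dim M-\dim P$ is a slip (that is the corank), and the displayed formula $(2n-k)-k=2n-2k$ that you flagged for rechecking was in fact already correct.
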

\begin{proof}
  It suffices to show that for all $x_0 \in P$, $\mathrm{rk} \; \pi^{\#}(x_0) = 2n-2k$. To this end, it suffices to prove that if $x^1,\ldots,x^{2n-k}$ are local coordinates near $x_0 \in P$, the Hamiltonian vector fields $X_{x^1},\ldots,X_{x^{2n-k}}$ span a subspace of $\mathrm{T}_{x_0}P$ of dimension $2n-2k$. Fix $x_0 \in P$. Since $p$ is surjective, there exists $m \in M$ such that $p(m)=x_0$. Property \ref{item:13} implies that, for any $h \in \mathrm{C}^{\infty}(P)$,
  $$ (D_m p) X_{p^*h} = X_h (p(m)); $$
  \noindent
  thus it suffices to prove that 
  $$ (D_m p) X_{p^*x^1},\ldots, (D_m p) X_{p^* x^{2n-k}} $$
  \noindent
  span a subspace of $\mathrm{T}_{x_0}P$ of dimension $2n-2k$. Note that, for all $j$, $X_{p^*x^j}(m) \in (\ker D_mp)^{\sigma}$, since $p^*x^j$ is constant along the fibres of $p$. If
  \begin{equation}
    \label{eq:16}
    \text{span}\langle X_{p^*x^1}(m),\ldots,X_{p^*x^{2n-k}}(m) \rangle = (\ker D_mp)^{\sigma}
  \end{equation}
  \noindent
  the result follows, since
  $$ (D_m p)\;(\text{span} \langle X_{p^*x^1}(m),\ldots,X_{p^*x^{2n-k}}(m) \rangle) =  \pi^{\#}(x_0)(\cotan_{x_0} P) $$
  \noindent
  and $\ker D_mp \subset (\ker D_mp)^{\sigma}$ by property \ref{item:5}. Thus it remains to show that equation \eqref{eq:16} holds and, in order to achieve this, it suffices to prove that 
  $$X_{p^*x^1}(m),\ldots,X_{p^*x^{2n-k}}(m)$$
  \noindent
are linearly independent. Suppose that, for each $j=1,\ldots,2n-k$, there exists $\lambda_j \in \R$, such that
  $$ \sum\limits_{j=1}^{2n-k}\lambda_j X_{p^* x^j}(m) = 0, $$
  \noindent
  then, by definition,
  $$ 0 = i\Bigg(\sum\limits_{j=1}^{2n-k}\lambda_j X_{p^* x^j}(m)\Bigg) \sigma = \sum\limits_{j=1}^{2n-k} \lambda_j i(X_{p^* x^j}(m)) \sigma = \sum\limits_{j=1}^{2n-k} \lambda_j (p^*\de x^j)(m). $$
  \noindent
  Since $p$ is a submersion, the above equality implies that
  $$ \sum\limits_{j=1}^{2n-k} \lambda_j \de x^j(x_0) = 0, $$
  \noindent
  which can hold if and only if $\lambda_j=0$ for all $j$, since $x^1,\ldots,x^{2n-k}$ define local coordinates near $x_0$. This concludes the proof of the proposition.
\end{proof}

\begin{remark}\label{rk:ncihs_properties}
  Let $F: \asm \to P \subset \R^{2n-k}$ be a NCIHS satisfying hypotheses \ref{item:32} -- \ref{item:40} and let $F: \asm \to \tpm$ denote the associated twisted isotropic realisation (cf. Observation \ref{obs:relation_ncihs_twisted_isotropic}). By Proposition \ref{prop:isotropic_implies_regular}, $(P,\pi,\phi)$ is regular, which implies that its almost symplectic foliation $\mathcal{F}$ is integrable in the sense of Frobenius; moreover, the rank of $\{.,.\}_P$ equals $2n-2k$. This provides an alternative proof of Proposition 6 in \cite{fasso_sansonetto}.
\end{remark}

\subsection{Smooth classification}\label{sec:smooth-constructions}
In analogy with isotropic realisations of Poisson manifolds, it is possible to classify twisted isotropic realisations of a given twisted Poisson manifold up to fibrewise diffeomorphism. In fact, the constructions in the Poisson case work almost \textit{verbatim} in the twisted Poisson context; in what follows, proofs are omitted (they can be found in \cite{crainic_fernandes_poisson,daz_delz,mackenzie,vaisman}) unless they involve new considerations depending on the structure of twisted isotropic realisations. The underlying idea is to reduce the problem to the classification of some principal groupoid bundles (cf. \cite{moerdijk_mrcun,rossi} for exhaustive reviews); this observation was explained to us by Rui Loja Fernandes. Moreover, while all proofs are intrinsic in nature, most statements are formulated also in terms of local coordinates, so as to bridge the gap between the geometrical and mechanical approaches to the subject. \\

Throughout this section, fix a twisted isotropic realisation $\ir$ and let $\pi_{\sigma}$ be the $\de \sigma$-twisted Poisson bivector on $M$ induced by $\sigma$. Let $\mathcal{F}$ denote the almost symplectic foliation of $\tpm$ and recall that $\mathrm{pr}:\nu^*\mathcal{F} \to P$ inherits the structure of a Lie algebroid with zero anchor and trivial Lie bracket (cf. Property \ref{item:12}). The idea behind this section is to prove that $\ir$ is a principal groupoid bundle for the Lie groupoid integrating $\mathrm{pr}:\nu^*\mathcal{F} \to P$.

\begin{lemma}\label{lemma:lie_algebroid_action}
  The map
  \begin{equation*}
    \begin{split}
      A: \Gamma(\nu^*\mathcal{F}) &\to \Gamma(\mathrm{T}M) \\
      \alpha &\mapsto \pi_{\sigma}^{\#}(p^*\alpha).
    \end{split}
  \end{equation*}
  \noindent
  is an infinitesimal Lie algebroid action of $\mathrm{pr}: \nu^* \mathcal{F} \to P$ along the twisted isotropic realisation $\ir$.
\end{lemma}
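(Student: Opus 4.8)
The plan is to verify the two defining conditions of an infinitesimal Lie algebroid action of $\mathrm{pr}:\nu^*\mathcal{F}\to P$ along $\ir$ (in the sense of, e.g., \cite{mackenzie}): namely that $A$ is $\mathrm{C}^\infty(P)$-linear along $p$ and compatible with anchors, and that $A$ is a Lie algebra homomorphism from $(\Gamma(\nu^*\mathcal{F}),[.,.]_\phi|_{\nu^*\mathcal{F}})$ — which by Property \ref{item:12} is the trivial (abelian, zero-anchor) bracket — to $(\Gamma(\mathrm{T}M),[.,.])$. Since the Lie bracket on $\nu^*\mathcal{F}\to P$ vanishes, the homomorphism condition reduces to showing that $[A(\alpha),A(\beta)]=0$ for all $\alpha,\beta\in\Gamma(\nu^*\mathcal{F})$, and the zero-anchor condition forces us to check that each $A(\alpha)$ is $p$-vertical. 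The $\mathrm{C}^\infty(P)$-linearity along $p$ is immediate from the $\mathrm{C}^\infty(M)$-linearity of $\pi_\sigma^\#$ together with $p^*(f\alpha)=(p^*f)(p^*\alpha)$.

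First I would show that $A(\alpha)$ is tangent to the fibres of $p$, i.e. $A(\alpha)\in\Gamma(V)$ where $V=\ker Dp$. Fix $m\in M$ and $h\in\mathrm{C}^\infty(P)$; we must check $(D_mp)(A(\alpha)(m))=0$, equivalently $A(\alpha)(F^*h)=0$ for all $h$, equivalently $\sigma(A(\alpha),X_{p^*h})=0$ pointwise. Now $A(\alpha)=\pi_\sigma^\#(p^*\alpha)$, so $\sigma(A(\alpha),X_{p^*h})=\langle p^*\alpha, X_{p^*h}\rangle$ up to sign; since $\alpha\in\Gamma(\nu^*\mathcal{F})$ annihilates $\mathrm{im}\,\pi^\#$, and $(D_mp)X_{p^*h}=X_h(p(m))\in\mathrm{im}\,\pi^\#(p(m))$ by property \ref{item:13} (as used in the proof of Proposition \ref{prop:isotropic_implies_regular}), we get $\langle\alpha_{p(m)},(D_mp)X_{p^*h}\rangle=0$, hence $\langle p^*\alpha,X_{p^*h}\rangle(m)=0$. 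This shows $A(\alpha)$ is vertical, matching the zero anchor of $\nu^*\mathcal{F}\to P$.

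The main obstacle is the bracket identity $[A(\alpha),A(\beta)]=0$. Here I would use equation \eqref{eq:3} applied to the $\de\sigma$-twisted Poisson structure on $M$: locally writing $\alpha=\sum_i g_i\,\de c_i$, $\beta=\sum_j h_j\,\de d_j$ where $c_i,d_j$ are (locally defined) Casimirs of $\tpm$ — so that $p^*c_i,p^*d_j$ satisfy $\{p^*c_i,\cdot\}_\sigma=0$ and thus their $\sigma$-Hamiltonian vector fields are $A(\de c_i)=X_{p^*c_i}$ — I reduce to controlling $[X_{p^*c_i},X_{p^*d_j}]$. By \eqref{eq:3} this equals $X_{\{p^*c_i,p^*d_j\}_\sigma}+\pi_\sigma^\#\big(\de\sigma(X_{p^*c_i},X_{p^*d_j},-)\big)$. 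The first term vanishes because $p^*c_i$ is a Casimir pulled back along the almost Poisson morphism $p$, so $\{p^*c_i,p^*d_j\}_\sigma=p^*\{c_i,d_j\}=0$. For the second term I use $\de\sigma=p^*\phi$ (property \ref{item:42}) and the fact that $X_{p^*c_i}=\pi_\sigma^\#(p^*\de c_i)$ with $\de c_i$ conormal: then $(D_mp)X_{p^*c_i}=\pi^\#(\de c_i)=0$ since $c_i$ is a Casimir, so $\de\sigma_m(X_{p^*c_i},X_{p^*d_j},-)=(p^*\phi)_m(X_{p^*c_i},X_{p^*d_j},-)=\phi_{p(m)}((D_mp)X_{p^*c_i},\ldots)=0$. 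Hence $[X_{p^*c_i},X_{p^*d_j}]=0$, and expanding the general $[A(\alpha),A(\beta)]$ via the Leibniz rule for the Lie bracket of vector fields — using that $A$ is $\mathrm{C}^\infty(P)$-linear along $p$ and that the functions $A(\de c_i)(g)=\pi_\sigma^\#(p^*\de c_i)(g)$ appearing in the cross terms are again basic, indeed zero on the relevant arguments because $X_{p^*c_i}$ is $p$-vertical — one concludes $[A(\alpha),A(\beta)]=0$. Care is needed to glue the local Casimir computations into a global statement, but this follows since $A$ is tensorial in the "coefficient" directions and the bracket identity is local.
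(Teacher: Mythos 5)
Your proof is correct, and its overall skeleton is the same as the paper's: both verify the four conditions of Mackenzie's Definition 4.1.1 ($\mathrm{C}^{\infty}(P)$-linearity along $p$, compatibility with the zero anchor, i.e.\ verticality, and commutativity of the image, the Lie bracket on $\nu^*\mathcal{F}$ being trivial by Property \ref{item:12}). The difference lies in how the key step $[A(\alpha),A(\beta)]=0$ is carried out. The paper argues intrinsically for arbitrary conormal sections: it computes $i([\pi^{\#}_{\sigma}(p^*\alpha),\pi^{\#}_{\sigma}(p^*\beta)])\sigma$ using the bracket identity for the $\de\sigma$-twisted structure, the vanishing of $\pi(\alpha,\beta)$, property \ref{item:42} and the relation $Dp\,(\pi^{\#}_{\sigma}(p^*\gamma))=\pi^{\#}(\gamma)$, and then concludes by non-degeneracy of $\sigma$. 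You instead reduce to a local frame of $\nu^*\mathcal{F}$ by differentials of Casimirs (Property \ref{item:12}, available since $\tpm$ is regular by Proposition \ref{prop:isotropic_implies_regular}), apply equation \eqref{eq:3} to the pulled-back Casimirs, kill the twisting term via $\de\sigma=p^*\phi$ and $\pi^{\#}(\de c_i)=0$, and dispose of the Leibniz cross-terms by verticality; locality of the bracket then globalises the statement, as you note. The two executions rest on the same ingredients (\ref{item:13}, \ref{item:42}, and $\pi^{\#}|_{\nu^*\mathcal{F}}=0$), so what your frame-based route buys is mainly concreteness at the price of the auxiliary choice of local Casimirs, while the paper's computation is choice-free and works verbatim for non-exact sections. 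Your verticality argument (pairing with the $X_{p^*h}$ via $\sigma$ and the morphism property) is also fine, and is a mild variant of the standard argument the paper imports from the Poisson case, where property \ref{item:5} enters through the identification of $\nu^*\mathcal{F}$ with $\ker\pi^{\#}$ guaranteed by Proposition \ref{prop:isotropic_implies_regular}.
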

\begin{proof}
  The above statement is equivalent to proving the following four properties (cf. Definition 4.1.1 in \cite{mackenzie}):
  \begin{enumerate}[label=\arabic*), ref=\arabic*]
  \item \label{item:16} for all $\alpha, \beta \in \Gamma(\nu^*\mathcal{F})$, $\pi^{\#}_{\sigma}(p^*(\alpha + \beta)) = \pi^{\#}_{\sigma}(p^*\alpha) + \pi^{\#}_{\sigma}(p^*\beta)$;
  \item \label{item:17} for all $h \in \mathrm{C}^{\infty}(P)$, for all $\alpha \in \Gamma(\nu^*\mathcal{F})$, $\pi^{\#}_{\sigma}(p^*(h\alpha)) = (p^*h)\pi^{\#}_{\sigma}(p^*\alpha)$;
  \item \label{item:18} for all $\alpha \in \Gamma(\nu^*\mathcal{F})$, $\pi^{\#}_{\sigma}(p^*\alpha) \in \ker D p$;
  \item \label{item:19} for all $\alpha, \beta \in \Gamma(\nu^*\mathcal{F})$, $[\pi^{\#}_{\sigma}(p^*\alpha), \pi^{\#}_{\sigma}(p^*\beta)]=0$.
  \end{enumerate}
  Properties \ref{item:16}, \ref{item:17} and \ref{item:18} can be proved as in the case of isotropic realisations of Poisson manifolds; observe that property \ref{item:5} is crucial to the proof of \ref{item:18}. Fix $\alpha, \beta \in \Gamma(\nu^*\mathcal{F})$. Properties \ref{item:13} and \ref{item:42} imply that
  \begin{equation*}
    \begin{split}
      i([\pi^{\#}_{\sigma}(p^*\alpha), \pi^{\#}_{\sigma}(p^* \beta)])\sigma &= i(\pi^{\#}_{\sigma}(\de \pi_{\sigma}(p^*\alpha,p^*\beta)))\sigma - \de \sigma(\pi^{\#}_{\sigma}(p^*\alpha),\pi^{\#}_{\sigma}(p^* \beta), -) \\
      &= i(\pi^{\#}_{\sigma}(p^*(\de \underbrace{\pi(\alpha,\beta)}_{=0})))\sigma - (p^*\phi)(\pi^{\#}_{\sigma}(p^*\alpha),\pi^{\#}_{\sigma}(p^* \beta), -) \\
      &= - \phi (Dp (\pi^{\#}_{\sigma}(p^*\alpha)),Dp (\pi^{\#}_{\sigma}(p^* \beta)),Dp (-)) \\
      &= - \phi (\pi^{\#}(\alpha),\pi^{\#}(\beta),Dp ( -)) = 0,
    \end{split}
  \end{equation*}
  \noindent
  where the last equality uses that, for any 1-form $\gamma$ on $P$, $Dp (\pi^{\#}_{\sigma}(p^*\gamma)) = \pi^{\#}(\gamma)$, which is a consequence of property \ref{item:13}. Since $\sigma$ is non-degenerate, the above equation implies that $[\pi^{\#}_{\sigma}(p^*\alpha), \pi^{\#}_{\sigma}(p^* \beta)]=0$; as $\alpha, \beta$ are arbitrary, property \ref{item:19} is proved. This concludes the proof of the lemma.
\end{proof}

\begin{remark}\label{rk:why_the_above_proof_works}
  Non-degeneracy of $\sigma$ also implies that for each $x_0 \in P$ and for all $m \in p^{-1}(x_0)$, $A(\nu^*_{x_0} \mathcal{F}) = \ker D_m p$.
\end{remark}

\begin{remark}\label{rk:complete_realisations_integrability}
  A surjective submersion $p:\asm \to \tpm$ with connected fibres satisfying properties \ref{item:13} and \ref{item:42} is called a \emph{symplectic realisation} of $\tpm$ in \cite{bursztyn_crainic_weinstein_zhu}. Such a realisation enjoys an infinitesimal action of $\cotan P \to P$ with Lie algebroid structure defined by $(\pi,\phi)$ as in Section \ref{sec:twist-poiss-manif}; this action is defined as in Lemma \ref{lemma:lie_algebroid_action} and it maps a  1-form $\alpha$ on $P$ to $\pi^{\#}_{\sigma}(p^*\alpha)$. A symplectic realisation $\ir$ is \emph{complete} if for all compactly supported 1-forms $\alpha$ on $P$, the flow of $\pi^{\#}_{\sigma}(p^*\alpha)$ is complete. Twisted isotropic realisations in the sense of Definition \ref{defn:symplectic_realisations_almost_poisson} are complete symplectic realisations. As shown in \cite{bursztyn_crainic_weinstein_zhu,crainic_fernandes_poisson}, existence of a complete symplectic realisation of a twisted Poisson manifold $\tpm$ is equivalent to integrability of its Lie algebroid $\cotan P \to P$ with the structure defined by equation \eqref{eq:2}. As an example, let $\pi$ be a bivector on $P$ whose characteristic distribution is Frobenius integrable (so that it is necessarily of constant rank); \cite{balseiro_garcia_naranjo} implies that there exists an exact form $\de \tau$ on $P$ which is a twisting form for $\pi$ (cf. Property \ref{item:11}). Example \ref{exm:regular_twisted_poisson_admit_wtir} shows that the total space of the conormal bundle $ \nu^* \mathcal{F}$ can be endowed with an almost symplectic form $\omega_0$ which makes
  $$ \mathrm{pr}: (\nu^* \mathcal{F}, \omega_0) \to (P,\pi,\de \tau)$$
  \noindent
  into a twisted isotropic realisation of $(P,\pi,\de \tau)$ with non-compact fibres. It can be checked that it is complete in the above sense. Therefore, $(P,\pi,\de \tau)$ is integrable in the sense of \cite{bursztyn_crainic_weinstein_zhu}. This proves that to any bivector $\pi$ whose characteristic distribution is Frobenius integrable can be associated an exact twisting form whose resulting twisted Poisson structure is integrable. For instance, consider $P = S^2 \times \R$; the 2-form
  $$ \tau = (1+t^2) \omega_{S^2}, $$
  \noindent
  where $t$ is a global coordinate on $\R$ and $\omega_{S^2}$ denotes the standard symplectic form on $S^2$, defines a Poisson structure on $P$ whose symplectic foliation $\mathcal{F}$ is given by projection onto the second factor $\mathrm{pr}: P \to \R$. Let $\pi$ denote the resulting bivector. It is well-known (cf. \cite{crainic_fernandes_poisson,weinstein}) that $(P,\pi)$ is not integrable as a Poisson manifold; the above argument shows that $(P,\pi,\de \tau)$  is an integrable twisted Poisson manifold. 
\end{remark}

Property \ref{item:9} ensures that the action defined in Lemma \ref{lemma:lie_algebroid_action} can be integrated to an action of the Lie groupoid integrating the Lie algebroid $\mathrm{pr}: \nu^* \mathcal{F} \to P$, which is nothing but $\mathrm{pr}:\nu^*\mathcal{F} \to P$ with fibrewise addition of 1-forms (this is, in fact, a bundle of abelian, simply connected Lie groups). For each $\alpha \in \Gamma(\nu^*\mathcal{F})$, denote by $\varphi^t_{\alpha}$ the flow of the vector field $\pi_{\sigma}^{\#}(p^*\alpha)$. The smooth Lie groupoid action integrating the infinitesimal action of Lemma \ref{lemma:lie_algebroid_action} is defined by
\begin{equation}
  \label{eq:20}
  \begin{split}
    \mu : \nu^*\mathcal{F} \tensor[_{\mathrm{pr}}]{\times}{_{\pi}} M &\to M \\
    (\alpha, m) &\mapsto \varphi^1_{\alpha}(m),
  \end{split}
\end{equation}
\noindent
where
$$ \nu^*\mathcal{F} \tensor[_{\mathrm{pr}}]{\times}{_{\pi}} M :=\{(\alpha,m) \in \nu^*\mathcal{F} \times M \,:\, \mathrm{pr}(\alpha) = p(m) \,\} $$
\noindent
(cf. Section 5.3 of \cite{moerdijk_mrcun} for generalities on Lie groupoid actions). A way to describe the action of equation \eqref{eq:20} is that, for all $x_0 \in P$, $\nu^*_{x_0} \mathcal{F}$ acts smoothly on $p^{-1}(x_0)$ and the action varies smoothly with $x_0$. In light of Remark \ref{rk:why_the_above_proof_works}, for all $x_0 \in P$, the action of $\nu^*_{x_0} \mathcal{F}$ on $p^{-1}(x_0)$ is transitive. Moreover, since $\nu^*_{x_0} \mathcal{F}$ is abelian, the isotropy subgroups of the action at $m, m' \in p^{-1}(x_0)$ can be \emph{canonically} identified. For each $x_0 \in P$, define
$$ \Lambda_{x_0} :=\{ \alpha_{x_0} \in \nu^*_{x_0} \mathcal{F} :\, \exists m \in p^{-1}(x_0) \text{ s.t. } \varphi^1_{\alpha_{x_0}}(m) = m \}. $$
\noindent 
Note that $p^{-1}(x_0) \cong \nu_{x_0}^*\mathcal{F}/\Lambda_{x_0}$; since $p^{-1}(x_0)$ is compact and $\dim p^{-1}(x_0) = \dim \nu^*_{x_0}\mathcal{F}$, $\Lambda_{x_0}$ is a cocompact discrete subgroup of $\nu_{x_0}^* \mathcal{F}$. Therefore, $p^{-1}(x_0) \cong \mathbb{T}^k$ and $\Lambda_{x_{0}} \cong \Z^k$.

\begin{defn}\label{defn:period_net}
  The subset
  $$ \Lambda : = \coprod_{x_0 \in P} \Lambda_{x_0} \subset \nu^* \mathcal{F} $$
  \noindent
  is called the \emph{period net} associated to the twisted isotropic realisation $\ir$.
\end{defn}

The following theorem gives a local smooth characterisation of twisted isotropic realisations, providing local smooth normal forms, and can be proved using the same arguments as in the Poisson case (cf. Theorem 8.15 in \cite{vaisman}).

\begin{theorem}\label{thm:smooth_structure}
  \mbox{}
  \begin{enumerate}[label=\arabic*), ref=\arabic*]
  \item \label{item:7} The period net $\Lambda \subset \nu^*\mathcal{F}$ is a closed embedded submanifold;
  \item \label{item:10} The composite $\Lambda \hookrightarrow \nu^*\mathcal{F} \to P$ is a fibre bundle whose structure group can be reduced to $\mathrm{GL}(k;\Z)$;
  \item \label{item:14} The quotient $\nu^*\mathcal{F}/\Lambda$ is a smooth manifold which inherits a projection $\mathrm{pr}: \nu^*\mathcal{F}/\Lambda \to P$ which makes it into a bundle of abelian Lie groups (and, hence, a groupoid);
  \item \label{item:15} A choice of locally defined section $s: U \subset P \to M$ of $\ir$ induces a diffeomorphism $ \psi_s: \nu^*\mathcal{F}/\Lambda|_U \to p^{-1}(U) $ which makes the following diagram commutative
    \begin{equation}
      \label{eq:4}
      \xymatrix{\nu^*\mathcal{F}/\Lambda|_U \ar[rr]^-{\psi_s} \ar[dr]_-{\mathrm{pr}} & & p^{-1}(U) \ar[dl]^-{p} \\
        & U. &}
    \end{equation}
    \noindent
    In fact, $\ir$ is a principal $\nu^* \mathcal{F}/\Lambda \to P$-bundle over $P$ and the diffeomorphism $\psi_s$ of equation \eqref{eq:4} is a local trivialisation for $\ir$ (cf. Section 5.7 of \cite{moerdijk_mrcun} and Section 2 of \cite{rossi} for a definition).
  \end{enumerate}
\end{theorem}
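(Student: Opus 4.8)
The plan is to run the argument for isotropic realisations of Poisson manifolds (cf. Theorem 8.15 in \cite{vaisman}) essentially \emph{verbatim}; the only place where the almost symplectic form enters is in producing a local normal form for the Lie groupoid action $\mu$ of equation \eqref{eq:20}, and this has already been prepared in Lemma \ref{lemma:lie_algebroid_action} and Remark \ref{rk:why_the_above_proof_works}. First I would fix $x_0 \in P$ and a local section $s : U \to M$ of $\ir$ on a connected and simply connected neighbourhood $U$ of $x_0$ (such a section exists because $p$ is a surjective submersion), and consider the smooth map
$$ \Phi_s : \nu^*\mathcal{F}|_U \longrightarrow p^{-1}(U), \qquad \Phi_s(\alpha) := \mu\big(\alpha, s(\mathrm{pr}(\alpha))\big) = \varphi^1_{\alpha}\big(s(\mathrm{pr}(\alpha))\big), $$
which satisfies $p \circ \Phi_s = \mathrm{pr}$. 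The crucial claim is that $\Phi_s$ is a surjective local diffeomorphism. Restricted to a fibre $\nu^*_y\mathcal{F}$, $\Phi_s$ is the orbit map of the transitive action of $\nu^*_y\mathcal{F}$ on $p^{-1}(y)$; by equivariance its differential has constant rank, which by Remark \ref{rk:why_the_above_proof_works} equals $\dim A(\nu^*_y\mathcal{F}) = \dim\ker D_{s(y)}p = k$, so this orbit map is a submersion onto $p^{-1}(y)$, and since $\Phi_s$ covers $\mathrm{id}_U$ it follows that $\Phi_s$ is a submersion. As $\dim \nu^*\mathcal{F} = (2n-k)+k = 2n = \dim M$ by Proposition \ref{prop:isotropic_implies_regular}, $\Phi_s$ is in fact a local diffeomorphism, and a short computation with the groupoid action identities and the definition of $\Lambda$ shows that $\Phi_s(\alpha) = \Phi_s(\beta)$ if and only if $\mathrm{pr}(\alpha) = \mathrm{pr}(\beta) =: y$ and $\alpha - \beta \in \Lambda_y$.

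From here the remaining assertions follow by soft arguments. The image $s(U)$ is a closed embedded submanifold of $p^{-1}(U)$, being the equaliser of $\mathrm{id}$ and $s\circ p$; since $\Phi_s$ is a local diffeomorphism with $p\circ\Phi_s = \mathrm{pr}$, the preimage $\Phi_s^{-1}(s(U))$ is a closed embedded submanifold of $\nu^*\mathcal{F}|_U$, and $\Phi_s^{-1}(s(U)) = \Lambda|_U$ because $\varphi^1_{\alpha}(s(y)) = s(y)$ precisely when $\alpha \in \Lambda_y$. As this description is intrinsic the local pieces glue, which proves part \ref{item:7}. Next, the fibrewise translation action of $\Lambda$ on $\nu^*\mathcal{F}$ is smooth, free (each $\Lambda_y$ is torsion-free) and proper (each $\Lambda_y$ is discrete and closed in the vector space $\nu^*_y\mathcal{F}$, and $\Lambda$ is closed and embedded), so by the quotient manifold theorem $\nu^*\mathcal{F}/\Lambda$ is a smooth manifold and fibrewise addition descends to make $\mathrm{pr} : \nu^*\mathcal{F}/\Lambda \to P$ a bundle of abelian Lie groups with fibre $\mathbb{T}^k$; this gives part \ref{item:14}. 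Because $\nu^*_y\mathcal{F}$ is abelian and acts transitively on $p^{-1}(y)$, every element of $\Lambda_y$ fixes \emph{every} point of $p^{-1}(y)$, so $\mu$ descends to a fibrewise free and transitive action of $\nu^*\mathcal{F}/\Lambda$ on $M$ over $P$, which exhibits $\ir$ as a principal $\nu^*\mathcal{F}/\Lambda$-bundle; moreover $\Phi_s$ descends to a $\nu^*\mathcal{F}/\Lambda$-equivariant diffeomorphism $\psi_s : \nu^*\mathcal{F}/\Lambda|_U \to p^{-1}(U)$ covering $\mathrm{id}_U$ and making diagram \eqref{eq:4} commute, which is part \ref{item:15}.

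For part \ref{item:10} I would argue exactly as in the Poisson case (cf. Theorem 8.15 in \cite{vaisman}): over a sufficiently small simply connected $U$ one produces smooth sections $\lambda_1,\dots,\lambda_k : U \to \Lambda$ which at each $y$ form a $\Z$-basis of the cocompact rank-$k$ lattice $\Lambda_y \subset \nu^*_y\mathcal{F}$ (a local period frame), using that $\mathrm{pr}|_\Lambda$ is a local diffeomorphism; such a frame trivialises $\Lambda|_U$ as a bundle of abelian groups with fibre $\Z^k$, and on overlaps the transition functions are continuous, hence locally constant, maps into $\mathrm{Aut}(\Z^k) = \mathrm{GL}(k;\Z)$, so $\Lambda \hookrightarrow \nu^*\mathcal{F} \to P$ is a fibre bundle with structure group $\mathrm{GL}(k;\Z)$.

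I expect the main obstacle to be the crucial claim of the first paragraph, namely that $\Phi_s$ is a local diffeomorphism — equivalently, that the orbit maps $\nu^*_y\mathcal{F} \to p^{-1}(y)$ are covering maps depending smoothly on $y$. This is where non-degeneracy of $\sigma$ is genuinely used, via Remark \ref{rk:why_the_above_proof_works}, together with the dimension count of Proposition \ref{prop:isotropic_implies_regular}. After that everything is general theory of bundles of Lie groups and of principal groupoid bundles (cf. \cite{moerdijk_mrcun,rossi}); in particular the twisting form $\phi$ plays no further role, which is why the proof runs as in the Poisson case.
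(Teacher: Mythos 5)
Your proposal is correct and follows essentially the same route the paper intends: the paper omits the proof, deferring to the Poisson-case argument (Theorem 8.15 in \cite{vaisman} and the implicit-function-theorem construction of period frames alluded to in Observation \ref{rk:in_local_coordinates}), and your argument is a faithful reconstruction of exactly that — the orbit map $\Phi_s$ of the groupoid action \eqref{eq:20} as a local diffeomorphism via Remark \ref{rk:why_the_above_proof_works} and the dimension count of Proposition \ref{prop:isotropic_implies_regular}, identification of $\Lambda|_U$ with $\Phi_s^{-1}(s(U))$, and local period frames giving the $\mathrm{GL}(k;\Z)$ reduction and the principal $\nu^*\mathcal{F}/\Lambda$-bundle structure. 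The only point stated more briefly than it deserves is that a frame of sections through a $\Z$-basis of $\Lambda_{x_0}$ remains a $\Z$-basis of $\Lambda_y$ for nearby $y$ (this uses closedness of $\Lambda$ together with uniqueness of local sheets of $\mathrm{pr}|_{\Lambda}$), but this is the standard lemma from the cited Poisson-case proof.
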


\begin{obs} \label{rk:in_local_coordinates}
  The results obtained thus far can also be expressed in terms of local coordinates on $P$. Fix $x_0 \in P$ and let $x^1,\ldots,x^k,x^{k+1},\ldots,x^{2n-k}$ be local coordinates defined near $x_0$ which are adapted to the almost symplectic foliation $\mathcal{F}$ of $\tpm$, so that $x^1,\ldots,x^k$ are local Casimirs. For each $m \in p^{-1}(x_0)$, consider $X_{p^* x^1}(m),\ldots,X_{p^*x^k}(m)$; by Lemma \ref{lemma:lie_algebroid_action}, the $\R$-span of $X_{p^* x^1}(m),\ldots,X_{p^*x^k}(m)$ is an abelian Lie algebra and it equals $T_m p^{-1}(x_0)$. Fix $m_0 \in p^{-1}(x_0)$ and define an $\R^k$-action on $p^{-1}(x_0)$ by
  $$ (t_1,\ldots, t_k) \mapsto \varphi^{t_1}_{x^1} \circ \ldots \circ \varphi^{t_k}_{x^k} (m_0), $$
  \noindent
  where $\varphi^t_{x^j}$ is the flow of $X_{p^*x^j}$ for each $j=1,\ldots,k$. This action is transitive and its isotropy group does not depend on the choice of $m_0$; the isotropy group is precisely $\Lambda_{x_0}$ as defined above. Therefore, $p^{-1}(x_0) \cong \mathbb{T}^k$. Using the implicit function theorem as in \cite{dui} and upon a choice of local section of $\ir$, it can be proved that the isotropy groups $\Lambda_{x_0}$ vary smoothly with $x_0$, thus showing that the identification of the fibres of $p$ with $\mathbb{T}^k$ depends smoothly on the basepoint. This is precisely the content of Theorem \ref{thm:smooth_structure}.
\end{obs}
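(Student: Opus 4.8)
The plan is to follow the classical argument for isotropic realisations of Poisson manifolds (cf. Theorem~8.15 in \cite{vaisman}): everything is controlled by the Lie groupoid action \eqref{eq:20}, which is available precisely because property \ref{item:9} makes each vertical vector field $\pi_{\sigma}^{\#}(p^*\alpha)$ complete on the compact fibres of $p$. Fix $x_0 \in P$ and a local section $s : U \to M$ of $\ir$ with $x_0 \in U$, and set
$$ \Psi_s : \nu^*\mathcal{F}|_U \to p^{-1}(U), \qquad \alpha \mapsto \varphi^1_{\alpha}(s(p(\alpha))), $$
where $\varphi^t_{\alpha}$ denotes the flow of $\pi_{\sigma}^{\#}(p^*\alpha)$. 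First I would check that $\Psi_s$ is a local diffeomorphism: it covers $\mathrm{id}_U$ and preserves fibres (the generating vector fields are vertical), on each fibre $\nu^*_x\mathcal{F} \to p^{-1}(x)$ its differential is surjective by Remark \ref{rk:why_the_above_proof_works}, and together with $Ds$ this forces $D\Psi_s$ to be an isomorphism by equality of dimensions $\dim \nu^*\mathcal{F}|_U = \dim p^{-1}(U) = 2n$; this is the implicit-function-theorem argument sketched in Observation \ref{rk:in_local_coordinates} and carried out in \cite{dui}. Since $\alpha \mapsto \pi_{\sigma}^{\#}(p^*\alpha)$ is additive with pairwise commuting images (cf. the proof of Lemma \ref{lemma:lie_algebroid_action}), the assignment $\alpha \mapsto \varphi^1_{\alpha}$ is a homomorphism of the fibrewise abelian groups; transitivity of the action on $p^{-1}(x)$ then gives $\varphi^1_{\lambda} = \mathrm{id}_{p^{-1}(x)}$ for every $\lambda \in \Lambda_x$, whence $\Lambda|_U = \Psi_s^{-1}(s(U))$. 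As $s(U)$ is an embedded submanifold of dimension $\dim P$ and $\Psi_s$ is a local diffeomorphism, $\Lambda|_U$ is an embedded submanifold; since $x_0$ was arbitrary, $\Lambda \subset \nu^*\mathcal{F}$ is an embedded submanifold, and it is closed by continuous dependence of $\varphi^1_{\alpha}$ on $\alpha$. This proves \ref{item:7}.

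For \ref{item:10}, the projection $\Lambda|_U \to U$ is \'etale, since the fibres $\Lambda_x$ are $0$-dimensional; shrinking $U$, a $\Z$-basis of $\Lambda_{x_0}$ extends to smooth local sections $\lambda_1,\dots,\lambda_k : U \to \Lambda$ which, by closedness of $\Lambda$, remain a $\Z$-basis of $\Lambda_x$ for all $x \in U$, trivialising $\Lambda|_U \cong U \times \Z^k$; the transition maps preserve the lattice structure and hence take values in $\mathrm{GL}(k;\Z)$. Part \ref{item:14} is then formal: $\Lambda$ acts freely and properly discontinuously on $\nu^*\mathcal{F}$ by fibrewise translation, so $\nu^*\mathcal{F}/\Lambda$ is a smooth manifold, the projection descends to $\mathrm{pr} : \nu^*\mathcal{F}/\Lambda \to P$, and fibrewise addition descends, making it into a bundle of tori $\mathbb{T}^k$ and hence a groupoid.

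Finally, for \ref{item:15} the map $\Psi_s$ descends to $\psi_s : \nu^*\mathcal{F}/\Lambda|_U \to p^{-1}(U)$, well defined because $\varphi^1_{\lambda}$ acts as the identity on $p^{-1}(x)$ for $\lambda \in \Lambda_x$; it is a diffeomorphism, being a fibrewise diffeomorphism over $\mathrm{id}_U$ assembled from the local diffeomorphism $\Psi_s$, and diagram \eqref{eq:4} commutes by construction. The action \eqref{eq:20} descends to a free action of $\nu^*\mathcal{F}/\Lambda \to P$ on $M$ that is transitive on the fibres of $p$, and for two sections $s,s'$ the unique $g$ with $s' = g \cdot s$ is the transition cocycle valued in the structure groupoid; hence $\ir$ is a principal $\nu^*\mathcal{F}/\Lambda \to P$-bundle. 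The twisting form intervenes only in Lemma \ref{lemma:lie_algebroid_action}, through the compatibility condition $p^*\phi = \de \sigma$ needed there to make the vector fields $\pi_{\sigma}^{\#}(p^*\alpha)$ commute; once that is in hand no new phenomena arise relative to the Poisson case, and I expect the only genuinely technical points to be the \'etale-continuation argument proving \ref{item:10} and the bookkeeping of the principal-bundle structure in \ref{item:15}.
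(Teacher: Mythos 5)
Your argument is correct and is essentially the route the paper itself takes: the map $\Psi_s(\alpha)=\varphi^1_{\alpha}(s(p(\alpha)))$ built from a local section, the abelian fibrewise action, and the identification $\Lambda|_U=\Psi_s^{-1}(s(U))$ are exactly the Duistermaat--Dazord--Delzant--Vaisman argument (Theorem 8.15 in \cite{vaisman}) that the paper invokes for Theorem \ref{thm:smooth_structure} and sketches in Observation \ref{rk:in_local_coordinates}. The only cosmetic difference is that the Observation phrases the same action as an $\R^k$-action via the flows of $X_{p^*x^j}$ for local Casimir coordinates, which is your intrinsic action read in the frame $\de x^1,\ldots,\de x^k$ of $\nu^*\mathcal{F}$, and the lattice-basis continuation step you flag is precisely the implicit-function-theorem point the paper defers to \cite{dui}.
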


Theorem \ref{thm:smooth_structure} reduces the smooth classification of twisted isotropic realisations of $\tpm$ with a given period net $\Lambda \to P$ to that of the classification of principal $\nu^* \mathcal{F}/\Lambda \to P$-bundles over $P$. The isomorphism classes of these bundles are in 1-1 correspondence with elements of $\mathrm{H}^1(P;\mathcal{C}^{\infty}(\nu^*\mathcal{F}/\Lambda))$, where $\mathcal{C}^{\infty}(\nu^*\mathcal{F}/\Lambda)$ denotes the sheaf of sections of $\mathrm{pr}: \nu^*\mathcal{F}/\Lambda \to P$ (cf. Section 3.5 of \cite{rossi}). One direction of this correspondence can be seen as follows. Let $\mathcal{U} =\{U_j\}$ be a good open cover of $P$, \textit{i.e.} all finite intersections of elements in $\mathcal{U}$ are contractible. Let $s_j:U_j \to M$ be locally defined sections of $\ir$ and let
$$ \psi_{s_j} : \nu^*\mathcal{F}/\Lambda|_{U_j} \to p^{-1}(U_j) $$
\noindent
denote the local trivialisation given by Theorem \ref{thm:smooth_structure}. Then, for all $j,l$ such that $U_j \cap U_l \neq \emptyset$, the diffeomorphism 
$$ \psi_{s_j}^{-1} \circ \psi_{s_l} : \nu^*\mathcal{F}/\Lambda|_{U_j \cap U_l} \to \nu^*\mathcal{F}/\Lambda|_{U_j \cap U_l} $$
\noindent
is defined by translations along the unique section $\kappa_{jl} + \Lambda : U_j \cap U_l \to \nu^*\mathcal{F}/\Lambda|_{U_j \cap U_l}$ defined by
\begin{equation}
  \label{eq:7}
  \mu(\kappa_{jl},s_l(x)) = s_j(x)
\end{equation}
\noindent
for all $x \in U_j \cap U_l$, where $\mu$ is the action of equation \eqref{eq:20}. Note that the section $\kappa_{jl} : U_j \cap U_l \to \nu^*\mathcal{F}|_{U_j \cap U_l}$ is not uniquely defined, but $\kappa_{jl} + \Lambda$ is, since $\Lambda \to P$ is the isotropy subgroupoid of the action $\mu$. By construction, $\{\kappa_{jl} + \Lambda\}$ form a \v Cech 1-cocycle whose cohomology class in $\mathrm{H}^1(P;\mathcal{C}^{\infty}(\nu^*\mathcal{F}/\Lambda))$ depends only on the isomorphism class of $\ir$. The other direction of the correspondence is explained and used in the proof of Theorem \ref{thm:main}. \\

The short exact sequence of groupoids over $P$ 
$$ \Lambda \hookrightarrow \nu^* \mathcal{F} \to \nu^* \mathcal{F}/\Lambda $$
\noindent 
induces a short exact sequence of sheaves
$$ 0 \to \mathcal{P}_{\Lambda} \to \mathcal{C}^{\infty}(\nu^*\mathcal{F}) \to \mathcal{C}^{\infty}(\nu^*\mathcal{F}/\Lambda) \to 0, $$
\noindent
where $\mathcal{P}_{\Lambda}$ and $\mathcal{C}^{\infty}(\nu^*\mathcal{F})$ denote the sheaves of sections of $\Lambda \to P$ and $\nu^* \mathcal{F} \to P$ respectively. Since $\mathcal{C}^{\infty}(\nu^*\mathcal{F})$ is a fine sheaf (cf. \cite{dui}), the connecting homomorphism
\begin{equation}
  \label{eq:6}
  \delta : \mathrm{H}^1(P;\mathcal{C}^{\infty}(\nu^*\mathcal{F}/\Lambda)) \to \mathrm{H}^2(P;\mathcal{P}_{\Lambda})
\end{equation}
\noindent
(in the long exact sequence in cohomology induced by the above short exact sequence of sheaves) is an isomorphism. Denote the cohomology class corresponding to the isomorphism class of $\ir $ by $\kappa \in \mathrm{H}^1(P;\mathcal{C}^{\infty}(\nu^*\mathcal{F}/\Lambda))$.

\begin{defn}\label{defn:cc}
  The cohomology class $c =\delta(\kappa)$ is called the \emph{Chern class} of the twisted isotropic realisation $\ir$.
\end{defn}

In summary, a twisted isotropic realisation $\ir$ is classified (up to smooth isomorphism) by 
\begin{enumerate}[label=\arabic*), ref=\arabic*]
\item \label{item:23} its period net $\Lambda \subset \nu^*\mathcal{F}$;
\item \label{item:24} its Chern class $c \in \mathrm{H}^2(P;\mathcal{P}_{\Lambda})$, which represents the obstruction to the existence of a section (cf. \cite{daz_delz,dui}).
\end{enumerate}

\begin{remark}\label{rk:standard_understanding_period_net}
  The period net $\Lambda \to P$ associated to a twisted isotropic realisation $\ir$ is a fibre bundle with fibre $\Z^k$ and structure group $\mathrm{GL}(k;\Z)$ and, as such, its isomorphism class is represented by (the conjugacy class of) a representation $\rho: \pi_1(P;x_0) \to \mathrm{GL}(k;\Z)$, for some basepoint $x_0 \in P$. The homomorphism $\rho$ is known in the integrable systems literature as the \emph{Hamiltonian monodromy} of the associated completely integrable Hamiltonian system and is an important invariant of the system, both mathematically and dynamically (cf. \cite{delos,dui}).
\end{remark}

\subsection{Almost symplectic constructions}\label{sec:almost-sympl-constr}
Theorem \ref{thm:smooth_structure} establishes the existence of local smooth trivialisations of twisted isotropic realisations of twisted Poisson manifolds upon a choice of local section (cf. diagram \eqref{eq:4}). In fact, the above statement can be refined to construct local action-angle coordinates in the sense of \cite{fasso_sansonetto} just as in the Poisson context (cf. Proposition \ref{prop:almost_symplectic_pullback}). In order to construct these local almost symplectic normal forms, it is important to notice that the action $\mu$ of $\mathrm{pr}:\nu^*\mathcal{F} \to P$ on $M$ along $\ir$ of equation \eqref{eq:20} is not by diffeomorphisms which preserve $\sigma$. 

\begin{proposition}[cf. Proposition 8.16 in \cite{vaisman}] \label{prop:action_not_almost_symplecto}
  Let $\alpha: U \subset P \to \nu^* \mathcal{F}$ be a locally defined 1-form. If $\varphi^t_{\alpha}$ denotes the flow of $\pi^{\#}_{\sigma}(p^*\alpha)$, then
  $$ (\varphi^1_{\alpha})^* \sigma - \sigma = p^* \de \alpha. $$
\end{proposition}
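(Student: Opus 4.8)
The plan is to compute the Lie derivative of $\sigma$ along the vector field $X := \pi^{\#}_{\sigma}(p^*\alpha)$ and integrate it. By Cartan's formula, $L_X \sigma = \de(i_X \sigma) + i_X \de \sigma$. The first term is immediate: since $X = \pi^{\#}_{\sigma}(p^*\alpha)$, the defining property of $\pi_{\sigma}$ relative to $\sigma$ (i.e. $i(\pi^{\#}_{\sigma}(\beta))\sigma = \beta$ for any $1$-form $\beta$, which follows from $i(X_f)\sigma = \de f$ extended $\mathrm{C}^\infty$-linearly) gives $i_X \sigma = p^*\alpha$, hence $\de(i_X \sigma) = p^*\de\alpha$. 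So the whole point is to show the second term $i_X \de \sigma$ vanishes.

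First I would use property \ref{item:42}, $\de \sigma = p^*\phi$, to rewrite $i_X \de \sigma = i_X p^*\phi = (p^*\phi)(X, -, -)$. Evaluating on two vector fields $Y, Z$ on $M$ gives $\phi(D p(X), Dp(Y), Dp(Z))$ at the relevant points. Now $X = \pi^{\#}_{\sigma}(p^*\alpha)$ with $\alpha$ a section of $\nu^*\mathcal{F}$, so by the remark following Lemma \ref{lemma:lie_algebroid_action} (that $Dp(\pi^{\#}_{\sigma}(p^*\gamma)) = \pi^{\#}(\gamma)$ for any $1$-form $\gamma$ on $P$, a consequence of property \ref{item:13}), we have $Dp(X) = \pi^{\#}(\alpha)$. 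But $\alpha \in \Gamma(\nu^*\mathcal{F}) = \Gamma(\ker \pi^{\#})$ by Property \ref{item:12} of the regular twisted Poisson manifold $\tpm$ (which is regular by Proposition \ref{prop:isotropic_implies_regular}), so $\pi^{\#}(\alpha) = 0$. Therefore $Dp(X) = 0$, the $3$-form expression vanishes identically, and $i_X \de\sigma = 0$. This yields $L_X \sigma = p^*\de\alpha$.

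Finally I would integrate. The identity $L_X \sigma = p^*\de\alpha$ holds pointwise, and $p^*\de\alpha$ is independent of the flow parameter, so $\frac{\de}{\de t}(\varphi^t_\alpha)^*\sigma = (\varphi^t_\alpha)^*(L_X\sigma) = (\varphi^t_\alpha)^*(p^*\de\alpha)$. Since $p \circ \varphi^t_\alpha = p$ (the flow of $X$ is vertical, as $X \in \ker Dp$ by property \ref{item:18} of Lemma \ref{lemma:lie_algebroid_action}), we get $(\varphi^t_\alpha)^*(p^*\de\alpha) = p^*\de\alpha$, so the derivative is the constant form $p^*\de\alpha$. Integrating from $0$ to $1$ gives $(\varphi^1_\alpha)^*\sigma - \sigma = p^*\de\alpha$, as claimed. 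One should note the flow $\varphi^1_\alpha$ is everywhere defined by property \ref{item:9} (compactness of fibres, as in the discussion preceding Definition \ref{defn:period_net}), so there is no issue with completeness.

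I do not expect a serious obstacle here; the only point requiring slight care is keeping straight the two uses of $\pi^{\#}$ (the one $Dp(X) = \pi^{\#}(\alpha)$ landing in $\mathrm{T}P$, versus $i_X\sigma = p^*\alpha$ on $M$), and invoking the regularity of $\tpm$ to identify $\nu^*\mathcal{F}$ with $\ker\pi^{\#}$ so that $\pi^{\#}(\alpha)=0$. The computation is otherwise a direct application of Cartan's formula together with properties \ref{item:13}, \ref{item:42}, and the infinitesimal action structure of Lemma \ref{lemma:lie_algebroid_action}.
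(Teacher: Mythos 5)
Your proof is correct and follows essentially the same route as the paper's: integrate $L_X\sigma$ via Cartan's formula, kill the term $i_X\de\sigma = i_X p^*\phi$ using that $X=\pi^{\#}_{\sigma}(p^*\alpha)$ is vertical, and use $p\circ\varphi^t_{\alpha}=p$ to make the integrand constant. The only cosmetic difference is that you re-derive verticality from $Dp(X)=\pi^{\#}(\alpha)=0$ (via $\nu^*\mathcal{F}=\ker\pi^{\#}$), whereas the paper simply cites point \ref{item:18} of Lemma \ref{lemma:lie_algebroid_action}, which encodes the same fact.
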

\begin{proof}
  Note that
  \begin{equation*} 
    \begin{split}
      (\varphi^1_{\alpha})^* \sigma - \sigma &=\int\limits_{0}^{1}\frac{\de}{\de t} (\varphi^t_{\alpha})^* \sigma \de t = \int\limits_{0}^{1} (\varphi^t_{\alpha})^*(L_{\pi^{\#}_{\sigma}(p^*\alpha)} \sigma) \de t \\
      &= \int\limits_{0}^{1} (\varphi^t_{\alpha})^*(i(\pi^{\#}_{\sigma}(p^*\alpha))\de \sigma + \de (i(\pi^{\#}_{\sigma}(p^*\alpha))\sigma)) \de t \\
      &= \int\limits_{0}^{1} (\varphi^t_{\alpha})^*(\underbrace{i(\pi^{\#}_{\sigma}(p^*\alpha))(p^*\phi)}_{=0}) + \de (i(\pi^{\#}_{\sigma}(p^*\alpha))\sigma)) \de t \\
      &=\int\limits_{0}^{1} (\varphi^t_{\alpha})^* \de(p^* \alpha) \de t  = \int\limits_{0}^{1} (p \circ \varphi^t_{\alpha})^* \de \alpha \de t \\
      &= \int\limits_{0}^{1} p^* \de \alpha \de t = p^* \de \alpha,
    \end{split}
  \end{equation*}
  \noindent
  where $i(\pi^{\#}_{\sigma}(p^*\alpha))(p^*\phi) = 0$ as $\pi^{\#}_{\sigma}(p^*\alpha) \in \ker D p$ (cf. Lemma \ref{lemma:lie_algebroid_action}), which also implies that $p \circ \varphi^t_{\alpha} = p$ for all $t \in \R$. This concludes the proof.
\end{proof}

\begin{obs}\label{rk:used_full_strength_assumption}
  Note that Proposition \ref{prop:action_not_almost_symplecto} is the first place where the full strength of condition \ref{item:42} is used. In terms of a NCIHS $F: \asm \to \R^{2n-k}$ on almost symplectic manifolds, the above result depends crucially on property \ref{item:32}, namely that the functions whose Hamiltonian vector fields are tangent to the fibres of $F$ are strongly Hamiltonian (cf. \cite{fasso_sansonetto}).
\end{obs}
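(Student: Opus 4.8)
The plan is to write the difference $(\varphi^1_\alpha)^*\sigma - \sigma$ as the integral over $t \in [0,1]$ of $\tfrac{\de}{\de t}(\varphi^t_\alpha)^*\sigma$, to recognise the integrand as the pullback by $\varphi^t_\alpha$ of the Lie derivative $L_X\sigma$ of $\sigma$ along the generator $X := \pi^{\#}_{\sigma}(p^*\alpha)$, and then to compute $L_X\sigma$ explicitly via Cartan's formula. Concretely, I would first record the two identities $(\varphi^1_\alpha)^*\sigma - \sigma = \int_0^1 (\varphi^t_\alpha)^*(L_X\sigma)\,\de t$ and $L_X\sigma = i(X)\de\sigma + \de(i(X)\sigma)$, which reduce the problem to controlling each of the two summands.

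Next I would dispose of the two summands. For $i(X)\de\sigma$, property \ref{item:42} identifies $\de\sigma$ with the basic form $p^*\phi$, and Lemma \ref{lemma:lie_algebroid_action} guarantees that $X = \pi^{\#}_{\sigma}(p^*\alpha)$ is tangent to the fibres of $p$, i.e.\ $X \in \ker Dp$; contracting a pulled-back form with a vector killed by $Dp$ gives zero, so $i(X)\de\sigma = 0$. For $\de(i(X)\sigma)$, the defining property $i(X_f)\sigma = \de f$ of Hamiltonian vector fields extends by $\mathrm{C}^{\infty}$-linearity to $i(\pi^{\#}_{\sigma}(\beta))\sigma = \beta$ for every $1$-form $\beta$ on $M$; taking $\beta = p^*\alpha$ gives $i(X)\sigma = p^*\alpha$, hence $L_X\sigma = \de(p^*\alpha) = p^*(\de\alpha)$.

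To finish, I would use once more that $X \in \ker Dp$: its flow preserves the fibres of $p$, so $p \circ \varphi^t_\alpha = p$ for all $t$, and therefore $(\varphi^t_\alpha)^*(p^*\de\alpha) = (p\circ\varphi^t_\alpha)^*\de\alpha = p^*\de\alpha$ does not depend on $t$. The integral then collapses to $(\varphi^1_\alpha)^*\sigma - \sigma = \int_0^1 p^*\de\alpha\,\de t = p^*\de\alpha$, as claimed.

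There is no real obstacle here — the computation is short — but the one substantive input, and the step I would flag as load-bearing, is the vanishing $i(X)\de\sigma = 0$: it is precisely where hypothesis \ref{item:42} enters (equivalently, in the language of NCIHS, the strong-Hamiltonicity requirement \ref{item:32} that renders $\de\sigma$ basic), and without it the clean identity of the proposition fails. Everything else is routine manipulation of flows, pullbacks, and Cartan's formula.
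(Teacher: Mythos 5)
Your argument coincides with the paper's own proof of Proposition \ref{prop:action_not_almost_symplecto}: the identical chain of steps (writing $(\varphi^1_{\alpha})^*\sigma-\sigma$ as the integral of $(\varphi^t_{\alpha})^*L_{X}\sigma$, Cartan's formula, the vanishing $i(X)\de\sigma=i(X)p^*\phi=0$ because $X=\pi^{\#}_{\sigma}(p^*\alpha)\in\ker Dp$, the identity $i(X)\sigma=p^*\alpha$, and $p\circ\varphi^t_{\alpha}=p$ collapsing the integral). You also single out precisely the point the observation is making, namely that the step $i(X)\de\sigma=0$ is where condition \ref{item:42} (equivalently, the strong-Hamiltonicity hypothesis \ref{item:32} rendering $\de\sigma$ basic) is used in full strength.
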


\begin{corollary}\label{cor:period_net_closed}
  Any locally defined section $\alpha : U \subset P \to \Lambda$ is a closed 1-form.
\end{corollary}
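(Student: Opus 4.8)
The plan is to obtain this as an immediate consequence of Proposition \ref{prop:action_not_almost_symplecto}. The first step is to show that the time-one flow $\varphi^1_{\alpha}$ of $\pi^{\#}_{\sigma}(p^*\alpha)$ is the identity on $p^{-1}(U)$. By definition of the period net, for each $x_0 \in U$ the covector $\alpha(x_0)$ lies in $\Lambda_{x_0}$, so there exists $m \in p^{-1}(x_0)$ with $\varphi^1_{\alpha(x_0)}(m) = m$. Since $\nu^*_{x_0}\mathcal{F}$ acts transitively on $p^{-1}(x_0)$ (cf. Remark \ref{rk:why_the_above_proof_works}) and is abelian, the isotropy subgroup of this action is the same at every point of $p^{-1}(x_0)$; hence $\varphi^1_{\alpha(x_0)}$ fixes all of $p^{-1}(x_0)$. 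As $x_0 \in U$ is arbitrary and the flow $\varphi^t_{\alpha}$ preserves the fibres of $p$, this yields $\varphi^1_{\alpha} = \mathrm{id}_{p^{-1}(U)}$. (One should also remark that $\varphi^1_{\alpha}$ is genuinely defined on all of $p^{-1}(U)$, since it agrees with the global groupoid action $\mu$ of \eqref{eq:20} evaluated along the section $\alpha$.)

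The second step is to feed this into Proposition \ref{prop:action_not_almost_symplecto}: since $(\varphi^1_{\alpha})^*\sigma = \sigma$, the proposition gives $p^*\de\alpha = 0$ on $p^{-1}(U)$. Finally, because $p$ is a surjective submersion, the pullback $p^*$ is injective on differential forms, whence $\de\alpha = 0$; that is, $\alpha$ is a closed $1$-form.

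I do not expect a genuine obstacle here: the statement is essentially a repackaging of Proposition \ref{prop:action_not_almost_symplecto} together with the defining property of $\Lambda$. The only point requiring a word of justification is the passage from ``$\alpha(x_0)$ fixes one point of $p^{-1}(x_0)$'' to ``$\alpha(x_0)$ fixes the whole fibre'', which is exactly the transitivity-and-commutativity argument already invoked to see that $\Lambda_{x_0}$ is independent of basepoint.
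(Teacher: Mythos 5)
Your proposal is correct and follows essentially the same route as the paper: identify $\Lambda$ as the isotropy subgroupoid of the action $\mu$, so $\varphi^1_{\alpha}=\mathrm{id}$, then apply Proposition \ref{prop:action_not_almost_symplecto} and use that $p$ is a surjective submersion to conclude $\de\alpha=0$. The only difference is that you spell out the transitivity-and-commutativity argument showing the isotropy is fibrewise constant, which the paper treats as already established in its discussion of the action preceding Definition \ref{defn:period_net}.
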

\begin{proof}
  By definition, $\Lambda$ is the isotropy subgroupoid of the action $\mu$ of $\mathrm{pr}: \nu^*\mathcal{F} \to P$ on $M$ along $\ir$. Therefore, for any locally defined section $\alpha : U \to \Lambda$, have that $\varphi^1_{\alpha} = \mathrm{id}$, which implies that $(\varphi^1_{\alpha})^*\sigma = \sigma$. Proposition \ref{prop:action_not_almost_symplecto} implies that $p^* \de \alpha = 0$. Since $p$ is a surjective submersion, this implies that $\de \alpha = 0$ as required.
\end{proof}

Proposition \ref{prop:action_not_almost_symplecto} can be used to understand how the almost symplectic form $\sigma$ on the total space of a twisted isotropic realisation $\ir$ pulls back under the trivialisations of diagram \eqref{eq:4}. This is the content of the next Proposition, which, in terms of a NCIHS $F: \asm \to \R^{2n-k}$, provides the existence of local action-angle coordinates in the sense of \cite{fasso_sansonetto}. Its proof is omitted as it can be adapted from that of Proposition 8.18 in \cite{vaisman}.

\begin{proposition}\label{prop:almost_symplectic_pullback}
  Let $\ir$ be a twisted isotropic realisation, let $\mathcal{F}$ denote the regular almost symplectic foliation of $\tpm$, and let $\Lambda$ denote the associated period net. For any locally defined section $s: U \subset P \to M$, 
  \begin{equation*}
    \psi_s^* \sigma = \omega_0 + \mathrm{pr}^* \circ s^* \sigma,
  \end{equation*}
  \noindent
  where $\jmath: \nu^*\mathcal{F} \to \cotan P$, $q : \nu^*\mathcal{F} \to \nu^*\mathcal{F}/\Lambda$ and $\mathrm{pr}:\nu^*\mathcal{F}/\Lambda \to P$ denote the natural inclusion, quotient and projection respectively, and $\omega_0$ is the closed 2-form on $\nu^*\mathcal{F}/\Lambda$ satisfying $q^*\omega_0 = \jmath^*\Omega_{\mathrm{can}}$.
\end{proposition}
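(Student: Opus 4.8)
The plan is to lift the identity to the total space of $\nu^*\mathcal{F}$ itself. Since $q:\nu^*\mathcal{F}|_U\to\nu^*\mathcal{F}/\Lambda|_U$ is a surjective local diffeomorphism with $\mathrm{pr}\circ q=\mathrm{pr}$ and $q^*\omega_0=\jmath^*\Omega_{\mathrm{can}}$, pulling back along $q$ shows that the claimed formula is equivalent to
\[ \widetilde{\psi}_s^*\sigma=\jmath^*\Omega_{\mathrm{can}}+\mathrm{pr}^*s^*\sigma\qquad\text{on }\nu^*\mathcal{F}|_U, \]
where $\widetilde{\psi}_s:=\psi_s\circ q:\nu^*\mathcal{F}|_U\to p^{-1}(U)$ is given explicitly by $\widetilde{\psi}_s(\alpha)=\mu(\alpha,s(\mathrm{pr}(\alpha)))=\varphi^1_\alpha(s(\mathrm{pr}(\alpha)))$; this is the orbit map describing the trivialisation $\psi_s$ of diagram \eqref{eq:4}, and it descends to the quotient precisely because $\Lambda$ is the isotropy subgroupoid of $\mu$. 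Set $\eta:=\widetilde{\psi}_s^*\sigma-\jmath^*\Omega_{\mathrm{can}}-\mathrm{pr}^*s^*\sigma$; the goal is $\eta=0$, and since $q$ is a surjective submersion this will give the proposition.

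First I would establish two structural properties of $\eta$. (i) $\eta$ vanishes on the fibres of $\mathrm{pr}:\nu^*\mathcal{F}|_U\to U$: the flows $\varphi^t_\alpha$ preserve the fibres of $p$ (Lemma \ref{lemma:lie_algebroid_action}), so $\widetilde{\psi}_s$ maps $\nu^*_x\mathcal{F}$ into $p^{-1}(x)$ and hence $\widetilde{\psi}_s^*\sigma$ restricts on $\nu^*_x\mathcal{F}$ to the pullback of $\sigma|_{p^{-1}(x)}=0$ (property \ref{item:5}); $\mathrm{pr}^*s^*\sigma$ obviously vanishes there; and $\jmath^*\Omega_{\mathrm{can}}$ vanishes on $\nu^*_x\mathcal{F}\subset\cotan_xP$ because the fibres of $\cotan P$ are Lagrangian. (ii) $\eta$ is invariant under the fibrewise translation $T_\beta:\nu^*\mathcal{F}|_U\to\nu^*\mathcal{F}|_U$, $\alpha\mapsto\alpha+\beta(\mathrm{pr}(\alpha))$, for every local section $\beta$ of $\nu^*\mathcal{F}$. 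For the term $\widetilde{\psi}_s^*\sigma$ this uses that $\mu$ is a groupoid action and the fibres of $\nu^*\mathcal{F}$ are abelian groups, giving $\widetilde{\psi}_s\circ T_\beta=\varphi^1_\beta\circ\widetilde{\psi}_s$ with $\varphi^1_\beta$ now the time-one flow of $\pi^{\#}_{\sigma}(p^*\beta)$ on $p^{-1}(U)$; Proposition \ref{prop:action_not_almost_symplecto} together with $p\circ\widetilde{\psi}_s=\mathrm{pr}$ then yields $T_\beta^*\widetilde{\psi}_s^*\sigma=\widetilde{\psi}_s^*\sigma+\mathrm{pr}^*\de\beta$. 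For $\jmath^*\Omega_{\mathrm{can}}$ one uses the standard transformation of the tautological $1$-form on $\cotan P$ under fibrewise translation by a $1$-form, which gives $T_\beta^*(\jmath^*\Omega_{\mathrm{can}})=\jmath^*\Omega_{\mathrm{can}}+\mathrm{pr}^*\de\beta$ once the sign conventions are fixed consistently with Proposition \ref{prop:action_not_almost_symplecto} and Example \ref{exm:regular_twisted_poisson_admit_wtir}; and $\mathrm{pr}^*s^*\sigma$ is $T_\beta$-invariant since $\mathrm{pr}\circ T_\beta=\mathrm{pr}$. The two $\mathrm{pr}^*\de\beta$ corrections cancel in $\eta$, so $T_\beta^*\eta=\eta$.

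Then I would conclude as follows. Working in a local vector-bundle trivialisation of $\nu^*\mathcal{F}$ with base coordinates $x^a$ and fibre coordinates $y^i$, property (i) says $\eta$ has no $\de y^i\wedge\de y^j$ component; invariance under constant translations forces the remaining coefficients to be independent of the $y^i$; and invariance under arbitrary sections $\beta$ then forces the $\de x^a\wedge\de y^i$ component to vanish. Hence $\eta=\mathrm{pr}^*\eta_0$ for a globally defined $2$-form $\eta_0$ on $U$. Pulling back along the zero section $0_U:U\to\nu^*\mathcal{F}|_U$ and using $\widetilde{\psi}_s\circ 0_U=s$ (since $\varphi^1_0=\mathrm{id}$), that $\jmath\circ 0_U$ is the zero section of $\cotan P$ on which $\Omega_{\mathrm{can}}$ vanishes, and $\mathrm{pr}\circ 0_U=\mathrm{id}_U$, one obtains $\eta_0=0_U^*\eta=s^*\sigma-0-s^*\sigma=0$, so $\eta=0$, which gives the result after applying $q^*$ injectivity on forms.

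The step I expect to be the main obstacle is (ii): keeping the bookkeeping straight between the fibrewise groupoid action $\mu$ and the flows of the genuinely $M$-defined vector fields $\pi^{\#}_{\sigma}(p^*\beta)$ to which Proposition \ref{prop:action_not_almost_symplecto} applies, and fixing the sign and orientation conventions so that the two $\mathrm{pr}^*\de\beta$ terms actually cancel in $\eta$ rather than reinforce one another. Completeness of the relevant flows for time one is not an issue, as it is guaranteed by property \ref{item:9}: compactness of the fibres of $p$ is exactly what allows the infinitesimal action of Lemma \ref{lemma:lie_algebroid_action} to integrate to the globally defined groupoid action $\mu$ of \eqref{eq:20}.
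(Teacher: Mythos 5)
Your strategy is sound and is essentially the one behind the proof that the paper leaves to the reader (it defers to Proposition 8.18 in \cite{vaisman}): lift everything to $\nu^*\mathcal{F}|_U$ via the orbit map $\widetilde{\psi}_s(\alpha)=\mu(\alpha,s(\mathrm{pr}(\alpha)))$, compare the two sides through their behaviour under fibrewise translations, and normalise along the zero section. Your step (i) is correct, your step (ii) is correct as well, and the sign worry you flag is not an issue: with the convention used in the paper (in the proof of Theorem \ref{thm:main}, $\alpha^*\Omega_{\mathrm{can}}=\de\alpha$ for a $1$-form $\alpha$) one indeed has $T_\beta^*(\jmath^*\Omega_{\mathrm{can}})=\jmath^*\Omega_{\mathrm{can}}+\mathrm{pr}^*\de\beta$, matching Proposition \ref{prop:action_not_almost_symplecto}, so the two corrections cancel in $\eta$.

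There is, however, one step that does not close in general: the claim that translation invariance kills the mixed $\de x^a\wedge\de y^i$ part of $\eta$. Writing $\eta=\frac12A_{ab}(x)\,\de x^a\wedge\de x^b+B_{ai}(x)\,\de x^a\wedge\de y^i$ (after your steps (i) and the constant-translation argument), invariance under $T_\beta$ gives $\sum_i\bigl(B_{ai}\partial_b\beta_i-B_{bi}\partial_a\beta_i\bigr)=0$ for all $\beta$, which forces $B=0$ only when there are at least two base directions, \textit{i.e.} $\dim P=2n-k\geq 2$. In the admissible case $n=k=1$ (a circle-fibred almost symplectic surface over a one-dimensional base) this condition is vacuous, and the zero-section evaluation does not detect the mixed term either, so your argument proves nothing about $B$ there. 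The repair, which is also the mechanism of the cited proof and removes the dimension restriction uniformly, is to compute contractions with vertical vectors directly: the vertical vector at $\alpha\in\nu^*_x\mathcal{F}$ determined by $\gamma\in\nu^*_x\mathcal{F}$ is sent by $D\widetilde{\psi}_s$ to $\pi^{\#}_{\sigma}(p^*\gamma)$ at $\widetilde{\psi}_s(\alpha)$ (use $\varphi^1_{t\gamma}=\varphi^t_{\gamma}$ and commutativity of the flows, cf. equation \eqref{eq:20}), so $i(V_\gamma)\widetilde{\psi}_s^*\sigma=\mathrm{pr}^*\gamma$ because $i(\pi^{\#}_{\sigma}(p^*\gamma))\sigma=p^*\gamma$ and $p\circ\widetilde{\psi}_s=\mathrm{pr}$; on the other side $i(V_\gamma)\jmath^*\Omega_{\mathrm{can}}=\mathrm{pr}^*\gamma$ as well, while $i(V_\gamma)\mathrm{pr}^*s^*\sigma=0$. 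Hence $i(V_\gamma)\eta=0$ for every vertical vector in every dimension; this subsumes your step (i), eliminates the problematic coordinate argument for $B$, and after that the constant-translation invariance and the zero-section evaluation finish the proof exactly as you wrote.
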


\begin{remark}\label{rk:existence_omega_quotient}
  The 2-form $\omega_0$ on $\nu^*\mathcal{F}/\Lambda$ is induced by $\jmath^* \Omega_{\mathrm{can}}$, since the action by translations of $\Lambda \to P$ (as a groupoid) on $\nu^*\mathcal{F}$ along $\mathrm{pr}: \nu^* \mathcal{F} \to P$ preserves $\jmath^* \Omega_{\mathrm{can}}$, as $\Lambda$ is locally spanned by closed 1-forms. 
\end{remark}

\section{Constructing twisted isotropic realisations using transversal integral affine geometry}\label{sec:some-title-including}
While Section \ref{sec:twist-isotr-real} solves the classification problem for twisted isotropic realisations of twisted Poisson manifolds, this section gives a criterion that allows to \emph{construct} all twisted isotropic realisations over a fixed twisted Poisson manifold $\tpm$. At the core of the construction problem is the \emph{transversal integral affine geometry} of a twisted isotropic realisation $\ir$ (cf. Section \ref{sec:affine-geometry}), which arises because of the associated period net $\Lambda \to P$. Section \ref{sec:constr-isotr-real} contains the main result of this note, which gives a cohomological obstruction to constructing twisted isotropic realisations (cf. Theorem \ref{thm:main}), which generalises the main result of \cite{daz_delz}.

\subsection{Transversal integral affine geometry of twisted isotropic realisations}\label{sec:affine-geometry}
A consequence of Corollary \ref{cor:period_net_closed} is that the almost symplectic foliation $\mathcal{F}$ of $(\pi,\phi)$ admits a transversally integral affine structure.

\begin{defn} (\cite{goldman_hirsch_levitt})\label{defn:tias}
  Let $B$ be a smooth manifold with a regular foliation $\mathcal{F}'$ of corank $d\geq 1$. A \emph{transversally integral affine structure} on $\mathcal{F}'$ is a smooth atlas $\mathcal{A} = \{(U_j,r_j)\}$ of submersions $r_j : U_j \subset P \to \R^{d}$ which locally define $\mathcal{F}'$ such that, for all $j,l$ such that $U_j \cap U_j \neq \emptyset$ and for each component $C \subset U_j \cap U_j$, there exists a transformation
  $$ (A_{jl,C},\mathbf{c}_{jl,C}) \in \mathrm{Aff}^{\Z}(\R^d):= \mathrm{GL}(d;\Z) \ltimes \R^d $$
  \noindent
  which satisfies $(A_{jl,C},\mathbf{c}_{jl,C}) \circ r_l = r_j$ on $C$.
\end{defn}

The following proposition provides an equivalent definition of transversal integral affine structures in terms of discrete subbundles of conormal bundles.

\begin{proposition}\label{prop:tias_equiv_period_net}
  With the notation of Definition \ref{defn:tias}, a regular foliation $\mathcal{F}'$ admits a transversally integral affine structure $\mathcal{A}$ if and only if there exists a subset $\Lambda \subset \nu^*\mathcal{F}'$ satisfying the following properties:
  \begin{enumerate}[label=(PN\arabic*), ref=(PN\arabic*)]
  \item \label{item:3} $\Lambda$ is a closed, embedded submanifold of $\nu^*\mathcal{F}'$;
  \item \label{item:20} the composite $\Lambda \hookrightarrow \nu^* \mathcal{F}' \to P$ is a fibre bundle whose structure group can be reduced to $\mathrm{GL}(d;\Z)$;
  \item \label{item:25} any locally defined section $\alpha : U \subset B \to \Lambda$ is a closed 1-form.
  \end{enumerate}
\end{proposition}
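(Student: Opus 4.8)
The plan is to prove the equivalence by constructing, from each structure, the other one, and checking the correspondence is mutually inverse up to the obvious notion of equivalence. The key observation tying the two sides together is that a submersion $r : U \subset B \to \R^d$ defining $\mathcal{F}'$ locally produces $d$ pointwise-linearly-independent closed $1$-forms $\de r^1, \ldots, \de r^d$, all of which annihilate $T\mathcal{F}'$, hence give a local frame of $\nu^*\mathcal{F}'$ by closed sections; conversely a local frame of $\nu^*\mathcal{F}'$ by closed $1$-forms can be integrated to such a submersion on a contractible open set via the Poincar\'e lemma.

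First I would prove the forward direction: assume $\mathcal{F}'$ carries a transversally integral affine atlas $\mathcal{A} = \{(U_j, r_j)\}$. On each $U_j$ define $\Lambda|_{U_j} \subset \nu^*\mathcal{F}'|_{U_j}$ to be the $\Z$-span of $\de r_j^1, \ldots, \de r_j^d$, i.e. $\Lambda_x = \{\sum_i n_i \,\de r_j^i(x) : n_i \in \Z\}$ for $x \in U_j$. One must check this is independent of the chart: on a component $C \subset U_j \cap U_l$ the transition is $r_j = A_{jl,C} r_l + \mathbf{c}_{jl,C}$ with $A_{jl,C} \in \mathrm{GL}(d;\Z)$, so $\de r_j = A_{jl,C}\,\de r_l$, and since $A_{jl,C}$ is an integer matrix with integer inverse, the $\Z$-spans of $\{\de r_j^i\}$ and $\{\de r_l^i\}$ coincide. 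This glued subset $\Lambda$ is then: (PN1) closed and embedded, because locally it is $\coprod_{n \in \Z^d}$ of the graphs of the closed $1$-forms $\sum_i n_i \,\de r_j^i$, which are disjoint embedded submanifolds closing up to a closed subset since the $\de r_j^i$ are linearly independent at each point; (PN2) a fibre bundle with fibre $\Z^d$ and transition functions the $A_{jl,C} \in \mathrm{GL}(d;\Z)$, by the computation just made; (PN3) locally spanned by closed $1$-forms, hence every local section is an integer combination of the $\de r_j^i$ on each component of its domain, thus closed.

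For the converse, suppose $\Lambda \subset \nu^*\mathcal{F}'$ satisfies (PN1)--(PN3). Cover $B$ by open sets $U_j$ that are small enough to be contractible, trivialise $\mathcal{F}'$, and on which $\Lambda|_{U_j}$ admits (by (PN2)) a frame of sections $\lambda_j^1, \ldots, \lambda_j^d : U_j \to \Lambda$. By (PN3) each $\lambda_j^i$ is a closed $1$-form on $U_j$ annihilating $T\mathcal{F}'$; by the Poincar\'e lemma on the contractible $U_j$ write $\lambda_j^i = \de r_j^i$, and set $r_j = (r_j^1, \ldots, r_j^d) : U_j \to \R^d$. Since the $\lambda_j^i$ are pointwise linearly independent and annihilate $T\mathcal{F}'$, the map $r_j$ is a submersion whose fibres are the plaques of $\mathcal{F}'$, so $r_j$ locally defines $\mathcal{F}'$. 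On a connected overlap component $C$, both $\{\de r_j^i\}$ and $\{\de r_l^i\}$ frame $\Lambda|_C$ over $\Z$, so there is a locally constant $A_{jl,C} \in \mathrm{GL}(d;\Z)$ with $\de r_j = A_{jl,C}\,\de r_l$ on $C$, and integrating gives $r_j = A_{jl,C} r_l + \mathbf{c}_{jl,C}$ for some constant $\mathbf{c}_{jl,C} \in \R^d$; thus $(A_{jl,C},\mathbf{c}_{jl,C}) \in \mathrm{Aff}^{\Z}(\R^d)$ realises the transition, and $\mathcal{A} = \{(U_j, r_j)\}$ is a transversally integral affine atlas. Finally I would remark that the two constructions are mutually inverse in the appropriate sense (a change of frame of $\Lambda|_{U_j}$, resp. a change of integration constant, only changes the atlas by an $\mathrm{Aff}^{\Z}(\R^d)$-equivalence), which closes the loop.

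The main obstacle I anticipate is verifying (PN1) cleanly --- namely that the locally-described union of graphs genuinely glues to a \emph{closed embedded} submanifold of the total space $\nu^*\mathcal{F}'$ rather than merely an immersed or locally closed one. The point is that discreteness and cocompactness of the lattice fibres, together with local linear independence of the framing forms, prevents the leaves $n \mapsto \sum_i n_i\,\de r_j^i$ from accumulating; one has to argue this is preserved under the $\mathrm{GL}(d;\Z)$ transitions and is a local condition, so it suffices to check it in a single chart, where it is elementary. Everything else --- the Poincar\'e-lemma integration, the bookkeeping of transition cocycles landing in $\mathrm{GL}(d;\Z)\ltimes\R^d$, and the submersion property --- is routine once this is in hand.
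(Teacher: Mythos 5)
Your proposal is correct and follows essentially the same route as the paper: in one direction you take the $\Z$-span of the differentials $\de r_j^i$ (which is exactly the paper's pull-back $r_j^*\Lambda_{\R^d}$ of the standard lattice) and check it glues via the $\mathrm{GL}(d;\Z)$ transitions, and in the other you frame $\Lambda$ locally by closed $1$-forms, integrate them on a contractible cover by the Poincar\'e lemma, and verify the transitions land in $\mathrm{Aff}^{\Z}(\R^d)$. The only difference is that you flag and argue the closed-embedded property (PN1) more explicitly than the paper's sketch, which simply asserts it locally.
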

\begin{proof}[Sketch of proof]
  Suppose that $\mathcal{A} = \{(U_j,r_j)\}$ is a transversally integral affine structure on $\mathcal{F}'$. Let $\Lambda_{\R^d} \subset \cotan \R^d$ be the subset defined by
  $$ \Lambda_{\R^d} := \{ (\mathbf{x},\mathbf{p}) \in \cotan \R^d \; : \; \mathbf{p} \in \Z\langle \de x^1,\ldots, \de x^d \rangle \}. $$
  \noindent
  Note that $\Lambda_{\R^d}$ satisfies properties \ref{item:3}, \ref{item:20} and \ref{item:25}, where the foliation on $\R^d$ has a unique leaf. For each $j$, define 
  $$\Lambda_{j} := r^*_j \Lambda_{\R^d} \subset r^* \cotan \R^d = \nu^*\mathcal{F}'|_{U_j};$$
  \noindent
  the condition that $\mathcal{A}$ defines a transversal integral affine structure implies that $\Lambda_{j}|_{U_l} = \Lambda_{l}|_{U_j}$. Therefore the locally defined $\Lambda_j$ patch together to define a subset $\Lambda \subset \nu^* \mathcal{F}'$ which satisfies properties \ref{item:3}, \ref{item:20} and \ref{item:25}, as the $\Lambda_j$ do locally. \\
  
  Conversely, let $\Lambda \subset \nu^* \mathcal{F}'$ satisfy the above properties. Choose a good open cover $\mathcal{U}=\{U_j\}$ of $B$ and, for each $j$, a frame $\alpha^1_j,\ldots,\alpha^d_j$ of $\Lambda|_{U_j} \to U_j$. Fix $j$; property \ref{item:25} implies that, for all $u = 1,\ldots, d$, $\de \alpha^u_j = 0$. Since $U_j$ is contractible, there exist functions $a^u_{j} : U_j \to \R$, for $u=1,\ldots,d$ such that $\de a^u_j = \alpha^u_j$. The map
  $$ r_j :=(a^1_j,\ldots,a^d_j): U_j \to \R^d $$
  \noindent
  is a submersion which locally defines $\mathcal{F}'$. Property \ref{item:20} implies that, for each $j,l$ such that $U_j \cap U_l \neq \emptyset$, there exist $ (A_{jl},\mathbf{c}_{jl}) \in \mathrm{Aff}^{\Z}(\R^d)$ which satisfy $(A_{jl},\mathbf{c}_{jl}) \circ r_l = r_j$, thus proving that $\mathcal{A}=\{(U_j,r_j)\}$ is a transversally integral affine structure.
\end{proof}

\begin{corollary}\label{cor:tias_twisted_ir}
  A twisted isotropic realisation $\ir$ induces a transversal integral affine structure $\mathcal{A}$ on the almost symplectic foliation $\mathcal{F}$ of $\tpm$.
\end{corollary}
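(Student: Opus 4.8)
The plan is to apply Proposition \ref{prop:tias_equiv_period_net} with $B = P$ and $\mathcal{F}' = \mathcal{F}$, the almost symplectic foliation of $\tpm$, using the period net $\Lambda \subset \nu^*\mathcal{F}$ of Definition \ref{defn:period_net} as the candidate discrete subbundle. Thus it suffices to verify that $\Lambda$ satisfies the three properties \ref{item:3}, \ref{item:20} and \ref{item:25} of that proposition. First I would note that $\tpm$ is regular with corank $k$ by Proposition \ref{prop:isotropic_implies_regular}, so $\mathcal{F}$ is indeed a regular foliation of corank $k \geq 1$ and $\nu^*\mathcal{F} \to P$ is a genuine vector bundle in which it makes sense to speak of $\Lambda$ as a subset; the transversal integral affine structure produced will then have $d = k$.

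Next I would check the three properties in turn, each of which has already been established earlier in the excerpt. Property \ref{item:3}, that $\Lambda$ is a closed embedded submanifold of $\nu^*\mathcal{F}$, is exactly part \ref{item:7} of Theorem \ref{thm:smooth_structure}. Property \ref{item:20}, that the composite $\Lambda \hookrightarrow \nu^*\mathcal{F} \to P$ is a fibre bundle with structure group reducible to $\mathrm{GL}(k;\Z)$, is part \ref{item:10} of the same theorem. Property \ref{item:25}, that every locally defined section $\alpha : U \subset P \to \Lambda$ is a closed $1$-form, is precisely the content of Corollary \ref{cor:period_net_closed}, which in turn rests on Proposition \ref{prop:action_not_almost_symplecto} and hence ultimately on condition \ref{item:42}. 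Having verified all three, Proposition \ref{prop:tias_equiv_period_net} immediately yields a transversally integral affine structure $\mathcal{A}$ on $\mathcal{F}$, completing the proof.

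There is essentially no hard part here: the corollary is a direct bookkeeping consequence of Theorem \ref{thm:smooth_structure} and Corollary \ref{cor:period_net_closed} together with the equivalence of Proposition \ref{prop:tias_equiv_period_net}. The only point worth spelling out is the identification of the corank $d$ in Definition \ref{defn:tias} with the corank $k$ of the foliation $\mathcal{F}$, which follows from Proposition \ref{prop:isotropic_implies_regular}, and the observation that the atlas $\mathcal{A}$ obtained from the "conversely" direction of Proposition \ref{prop:tias_equiv_period_net} is built from local primitives of a frame of sections of $\Lambda$. If anything deserves a remark, it is that the resulting affine structure depends only on the period net and not on further choices, so that isomorphic twisted isotropic realisations induce the same transversal integral affine structure — this is what makes the correspondence between $\mathcal{A}$ and $\Lambda \to P$ well posed.
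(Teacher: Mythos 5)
Your proposal is correct and is essentially the paper's own proof: the paper simply applies Proposition \ref{prop:tias_equiv_period_net} to the period net $\Lambda \to P$ of $\ir$, with the three hypotheses \ref{item:3}, \ref{item:20}, \ref{item:25} supplied exactly as you indicate by Theorem \ref{thm:smooth_structure} and Corollary \ref{cor:period_net_closed}. Your additional remarks on the corank identification and on well-posedness are fine but not needed.
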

\begin{proof}
  This follows from Proposition \ref{prop:tias_equiv_period_net} applied to the period net $\Lambda \to P$ associated to $\ir$.
\end{proof}

\begin{obs}\label{obs:actions_tiac}
  In terms of a NCIHS $F: \asm \to P \subset \R^{2n-k}$ satisfying hypotheses \ref{item:32} -- \ref{item:40}, the existence of a transversal integral affine structure on the almost symplectic foliation $\mathcal{F}$ of $\tpm$ can be seen as the existence of \emph{action} coordinates on $P$ which are transversal to $\mathcal{F}$ and change as in part (ii) of Proposition 4 in \cite{fasso_sansonetto} (cf. Theorem \ref{thm:lma_almost}).
\end{obs}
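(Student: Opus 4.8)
The plan is to make explicit the dictionary between the intrinsic period net $\Lambda \to P$ and the mechanical notion of action coordinates, so that the transversal integral affine structure produced by Corollary \ref{cor:tias_twisted_ir} is recognised as precisely an atlas of actions transversal to $\mathcal{F}$. Recall from the proof of Proposition \ref{prop:tias_equiv_period_net} that the submersions $r_j = (a^1_j,\ldots,a^k_j) : U_j \to \R^k$ defining this structure are primitives of a chosen frame $\alpha^1_j,\ldots,\alpha^k_j$ of $\Lambda|_{U_j}$, the latter being closed by Corollary \ref{cor:period_net_closed}. Thus each $r_j$ is a submersion locally defining $\mathcal{F}$ whose component differentials generate the lattice $\Lambda_{x_0} \subset \nu^*_{x_0}\mathcal{F}$ over every $x_0 \in U_j$, and I would begin by recording these two facts: transversality to $\mathcal{F}$ and the lattice-generating property.

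First I would identify the local primitives $a^u_j$ with the actions $\mathbf{a}$ of Theorem \ref{thm:lma_almost}. Under the correspondence of Observation \ref{obs:relation_ncihs_twisted_isotropic}, $\Lambda_{x_0}$ is generated by those $\alpha \in \nu^*_{x_0}\mathcal{F}$ whose Hamiltonian flow $\varphi^1_{\alpha}$ fixes the fibre $p^{-1}(x_0) \cong \mathbb{T}^k$, that is, by the periods of the commuting torus action generated by $X_{p^*x^1},\ldots,X_{p^*x^k}$ (cf. Observation \ref{rk:in_local_coordinates}). Inspecting the normal form in part ii) of Theorem \ref{thm:lma_almost}, one sees that the coefficient of $\de a^l \wedge \de \alpha^l$ equals $1$, so that the Hamiltonian vector field $X_{p^*a^l}$ is the $l$-th angle generator; its time-one flow therefore closes up the fibre, which exhibits each $\de a^l$, viewed as a $1$-form on $P$, as an element of $\Lambda_{x_0}$. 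Since the $\de a^l$ also form a frame of $\nu^*\mathcal{F}$, they generate $\Lambda$, and so the geometrically defined primitives agree with a choice of actions up to the ambiguity in selecting a lattice frame and a primitive.

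With the two notions of action identified, the transformation law is formal. Over a connected component of an overlap $U_j \cap U_l$, two frames of $\Lambda$ differ by a locally constant matrix in $\mathrm{GL}(k;\Z)$ by property \ref{item:20}, while two primitives of the same closed frame differ by a constant vector in $\R^k$; together these give a transition in $\mathrm{GL}(k;\Z) \ltimes \R^k$, which is the change-of-actions rule recorded in part (ii) of Proposition 4 of \cite{fasso_sansonetto}. The hard part is the dynamical identification of the previous paragraph: it is a genuine translation between the groupoid-theoretic description of $\Lambda$ developed here and the coordinate-based Liouville-Arnol'd construction of \cite{fasso_sansonetto}, requiring one to match the flows $\varphi^t_{\alpha}$ generating $\Lambda$ with the fibrewise torus action defining the actions. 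Once this correspondence is in place, transversality to $\mathcal{F}$ and the affine transition behaviour are immediate.
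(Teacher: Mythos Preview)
The paper does not supply a proof for this observation; it is stated as an interpretive remark linking the intrinsic period-net construction (Corollary \ref{cor:tias_twisted_ir} and Proposition \ref{prop:tias_equiv_period_net}) to the coordinate-based action variables of \cite{fasso_sansonetto}, and the reader is left to unpack the dictionary. Your proposal does exactly that unpacking, and correctly so: you identify the primitives $a^u_j$ of a local frame of $\Lambda$ with the mechanical actions by reading off from the normal form of Theorem \ref{thm:lma_almost} that $X_{p^*a^l}$ generates the $l$-th angle translation, hence $\de a^l$ lies in the period lattice; and you then recover the $\mathrm{GL}(k;\Z)\ltimes\R^k$ transition law from the lattice-frame and primitive ambiguities. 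This is the intended content of the observation, made explicit. The only thing one might add for completeness is that the $a^l$ are indeed Casimirs of $\{.,.\}_P$ (so that $\de a^l \in \nu^*\mathcal{F}$), which follows from the normal form since the $a$-coordinates label distinct almost symplectic leaves; but this is implicit in your phrase ``transversal to $\mathcal{F}$''.
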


Henceforth, identify a transversally integral affine structure with its associated \emph{period net} $\Lambda \to P$ constructed as in Proposition \ref{prop:tias_equiv_period_net}. Fix a twisted isotropic realisation $\ir$ and let $\Lambda \to P$ denote its period net. Corollary \ref{cor:tias_twisted_ir} can be used to infer a relation between the Chern class of $\ir$ and the characteristic class of $\tpm$ (cf. Definition \ref{defn:characteristic_class}) which generalises the results in \cite{daz_delz}. Since $\Lambda \to P$ is locally spanned by closed 1-forms, there is a homomorphism of sheaves
\begin{equation*}
  \begin{split}
    \bar{\de} : \mathcal{C}^{\infty}(\nu^*\mathcal{F}/\Lambda) &\to \mathcal{Z}^2(\nu^*\mathcal{F}) \\
    \alpha + \Lambda &\mapsto \de \alpha, 
  \end{split}
\end{equation*}
\noindent
where $\mathcal{Z}^2(\nu^* \mathcal{F})$ denotes the sheaf of closed 2-forms on $P$ which vanish along the leaves of $\mathcal{F}$. The homomorphism $\bar{\de}$ defined above induces a homomorphism of cohomology groups
\begin{equation}
  \label{eq:5}
  \mathcal{D}_{\Lambda} : \mathrm{H}^1(P;\mathcal{C}^{\infty}(\nu^*\mathcal{F}/\Lambda)) \cong \mathrm{H}^2(P;\mathcal{P}_{\Lambda}) \to \mathrm{H}^1(P;\mathcal{Z}^2(\nu^*\mathcal{F})) \cong \mathrm{H}^3_{\mathrm{rel}}(P;\mathcal{F}),
\end{equation}
\noindent
where the last isomorphism is well-known in foliation theory (cf. \cite{el_kacimi_alaoui,vaisman}). The homomorphism of equation \eqref{eq:5} is called the \emph{Dazord-Delzant} homomorphism (cf. \cite{daz_delz}); it can be defined for any manifold $B$ endowed with a regular foliation $\mathcal{F}'$ which is equipped with a transversally integral affine structure.

\begin{proposition}\label{prop:necessary_main_result}
  The Chern class $c \in \mathrm{H}^2(P;\mathcal{P}_{\Lambda})$ of a twisted isotropic realisation $\ir$ satisfies
  $$ \mathcal{D}_{\Lambda}(c) = \xi_{\phi},$$
  \noindent
  where $\xi_{\phi}$ is the characteristic class of $\tpm$.
\end{proposition}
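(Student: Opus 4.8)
The plan is to unwind the definitions of both sides of the claimed equality and to exhibit, from a good open cover, \v{C}ech representatives that agree up to a coboundary. Fix a good open cover $\mathcal{U} = \{U_j\}$ of $P$ together with local sections $s_j : U_j \to M$ of $\ir$. As recalled after Definition \ref{defn:cc}, the transition data $\kappa_{jl} + \Lambda$ defined by $\mu(\kappa_{jl}, s_l(x)) = s_j(x)$ represent the class $\kappa \in \mathrm{H}^1(P;\mathcal{C}^{\infty}(\nu^*\mathcal{F}/\Lambda))$, and the Chern class is $c = \delta(\kappa)$. Applying the Dazord--Delzant homomorphism $\mathcal{D}_{\Lambda}$, which is induced by the sheaf map $\bar{\de} : \alpha + \Lambda \mapsto \de\alpha$, the class $\mathcal{D}_{\Lambda}(c)$ is represented in $\mathrm{H}^1(P;\mathcal{Z}^2(\nu^*\mathcal{F}))$ by the cocycle $\{\de \kappa_{jl}\}$; under the standard isomorphism $\mathrm{H}^1(P;\mathcal{Z}^2(\nu^*\mathcal{F})) \cong \mathrm{H}^3_{\mathrm{rel}}(P;\mathcal{F})$ this corresponds, after a \v{C}ech--de Rham argument, to a global relative $3$-form. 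The first main step is therefore to produce that global $3$-form explicitly.

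To do this I would use Proposition \ref{prop:action_not_almost_symplecto}: since $\mu(\kappa_{jl}, s_l(x)) = s_j(x)$, composing the sections with the flow gives $\varphi^1_{\kappa_{jl}} \circ s_l = s_j$ on overlaps, hence $s_j^*\sigma - s_l^*\sigma = (\varphi^1_{\kappa_{jl}} \circ s_l)^*\sigma - s_l^*\sigma = s_l^*\big((\varphi^1_{\kappa_{jl}})^*\sigma - \sigma\big) = s_l^*\, p^*\de\kappa_{jl} = \de\kappa_{jl}$, using $p \circ s_l = \mathrm{id}$. Thus the locally defined $2$-forms $s_j^*\sigma \in \Omega^2(U_j)$ differ on overlaps precisely by the \v{C}ech cocycle $\{\de\kappa_{jl}\}$ representing $\mathcal{D}_{\Lambda}(c)$. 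This identifies $\mathcal{D}_{\Lambda}(c)$, via the \v{C}ech--de Rham isomorphism, with the class $[\de(s_j^*\sigma)]$ — a global $3$-form since $\de(s_j^*\sigma) = s_j^*\de\sigma = s_j^*\, p^*\phi = \phi|_{U_j}$ patch to $\phi$; but this needs care because we want a class in $\mathrm{H}^3_{\mathrm{rel}}$, not ordinary $\mathrm{H}^3_{\mathrm{dR}}$, so the patched form should be $\de\tau - \phi$ for an appropriate global $2$-form $\tau$ as in Property \ref{item:11}, not $\phi$ itself.

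The second main step is to match this with $\xi_{\phi} = [\de\tau - \phi] \in \mathrm{H}^3_{\mathrm{rel}}(P;\mathcal{F})$. The observation is that the $2$-forms $s_j^*\sigma$, while not globally defined, restrict on each leaf of $\mathcal{F}$ to the leafwise almost symplectic form induced by $\pi$ — this is because $p$ is an almost Poisson morphism (property \ref{item:13}) and $s_j$ is a section, so $s_j$ intertwines the leafwise structures. Hence any global $2$-form $\tau$ agreeing leafwise with $\pi$ (Property \ref{item:11}) differs from $s_j^*\sigma$ by a $2$-form vanishing on $\mathcal{F}$, i.e. an element of $\Omega^2_{\mathrm{rel}}(U_j;\mathcal{F})$. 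Running the \v{C}ech--de Rham zig-zag with $\{\kappa_{jl}\}$ in degree $(1,1)$, $\{s_j^*\sigma - \tau|_{U_j}\} \in \Omega^2_{\mathrm{rel}}$ in degree $(0,2)$, and $\de\tau - \phi \in \Omega^3_{\mathrm{rel}}$ in degree $(0,3)$, one checks the cocycle condition $\de\kappa_{jl} = (s_j^*\sigma - \tau) - (s_l^*\sigma - \tau)$ on overlaps (which is exactly the computation above, since $\de\kappa_{jl} = s_j^*\sigma - s_l^*\sigma$) and $\de(s_j^*\sigma - \tau) = \phi - \de\tau$ on each $U_j$. This exhibits $\{\de\kappa_{jl}\}$ and $-(\de\tau - \phi)$ (or $\de\tau - \phi$, up to a sign one fixes by convention) as representing the same class, giving $\mathcal{D}_{\Lambda}(c) = \xi_{\phi}$.

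The main obstacle I anticipate is bookkeeping with the two isomorphisms in \eqref{eq:5} — the connecting homomorphism $\delta$ identifying $\mathrm{H}^1(P;\mathcal{C}^{\infty}(\nu^*\mathcal{F}/\Lambda))$ with $\mathrm{H}^2(P;\mathcal{P}_{\Lambda})$ and the foliation-theoretic isomorphism $\mathrm{H}^1(P;\mathcal{Z}^2(\nu^*\mathcal{F})) \cong \mathrm{H}^3_{\mathrm{rel}}(P;\mathcal{F})$ — and making sure signs and the choice of $\tau$ are consistent throughout, so that $\mathcal{D}_{\Lambda}$ really sends the representative $\{\de\kappa_{jl}\}$ to the class of $\de\tau - \phi$ and not its negative. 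The geometric content, by contrast, is entirely contained in the one-line computation $s_j^*\sigma - s_l^*\sigma = \de\kappa_{jl}$, which is an immediate consequence of Proposition \ref{prop:action_not_almost_symplecto}; everything else is diagram-chasing in the \v{C}ech--de Rham double complex, as in \cite{daz_delz,vaisman}.
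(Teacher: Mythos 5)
Your proposal is correct and follows essentially the same route as the paper's proof: the key identity $s_j^*\sigma - s_l^*\sigma = \de\kappa_{jl}$ obtained from Proposition \ref{prop:action_not_almost_symplecto}, the use of properties \ref{item:13} and \ref{item:42} to see that the $s_j^*\sigma$ extend the leafwise forms and satisfy $\de(s_j^*\sigma)=\phi|_{U_j}$, and the \v{C}ech--de Rham identification of $\{\de\kappa_{jl}\}$ with $[\de\tau-\phi]=\xi_{\phi}$. The only (harmless) difference is one of order: you fix a global $2$-form $\tau$ from Property \ref{item:11} and take $s_j^*\sigma-\tau|_{U_j}$ as the relative primitives in the zig-zag, whereas the paper first produces local relative primitives $\lambda_j$ via fineness of the sheaf of relative $2$-forms and then reassembles the global extension $\tau'=s_j^*\sigma-\lambda_j$.
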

\begin{proof}
  The proof follows the same lines as that of the analogous statement in the case of isotropic realisations of Poisson manifolds (cf. \cite{daz_delz} and Proposition 8.24 in \cite{vaisman}). Let $\mathcal{U} =\{U_j\}$ be a good open cover of $P$, choose sections $s_j:U_j \to p^{-1}(U_j) $ of $\ir$, and let 
  $$ \kappa_{jl} + \Lambda : U_j \cap U_l \to \nu^* \mathcal{F}/\Lambda|_{U_j \cap U_l} $$
  \noindent
  be the \v Cech cocycle defined as in equation \eqref{eq:7}; its cohomology class 
  $$\kappa \in \mathrm{H}^1(P;\mathcal{C}^{\infty}(\nu^*\mathcal{F}/\Lambda))$$
  \noindent
  is the unique such that $\delta(\kappa)=c$ under the isomorphism of equation \eqref{eq:6}. Proposition \ref{prop:action_not_almost_symplecto} implies that 
  $$ (\varphi^1_{\kappa_{jl}})^* \sigma - \sigma = p^* \de \kappa_{jl} $$
  \noindent
  for any $\kappa_{jl} :  U_j \cap U_l \to \nu^* \mathcal{F}|_{U_j \cap U_l}$ which projects to $\kappa_{jl} + \Lambda$. Applying $s^*_l$ to both sides of the above equation, obtain that 
  $$ s_j^* \sigma - s^*_l \sigma = \de \kappa_{jl}, $$
  \noindent
  since $\varphi^1_{\kappa_{jl}} \circ s_l = s_j$ by definition of $\kappa_{jl}$, and $p \circ s_l = \mathrm{id}$. Since $\kappa_{jl}$ vanishes on the leaves of $\mathcal{F}$, $\de \kappa_{jl}$ is a closed 2-form vanishing on the leaves of $\mathcal{F}$. Thus $\{s_j^* \sigma - s^*_l \sigma\}$ defines a cohomology class in $\mathrm{H}^1(P;\mathcal{Z}^2(\nu^*\mathcal{F}))$. Since the cohomology class of $\{\de \kappa_{jl}\}$ equals $\mathcal{D}_{\Lambda}(c)$ by definition, it remains to prove that the cohomology class defined by $\{s_j^* \sigma - s^*_l \sigma\}$ equals $\xi_{\phi}$. Observe that the sheaf of 2-forms vanishing on $\mathcal{F}$ is fine (cf. \cite{vaisman}), which implies that there exist 2-forms $\lambda_j$ defined on $U_j$ vanishing on $\mathcal{F}$ such that 
  $$ s_j^* \sigma - s^*_l \sigma = \lambda_j - \lambda_l. $$
  \noindent
  Note that the above equation implies that the locally defined forms $s_j^*\sigma - \lambda_j$ patch together to yield a globally defined 2-form $\tau'$. The claim is that $\tau'$ extends the almost symplectic forms on the leaves of $\mathcal{F}$. For, on each $U_j$, the form $s_j^*\sigma -\lambda_j$ restricts to the leaves of $\mathcal{F}$ to $s_j^*\sigma|_{\mathcal{F}}$ by definition of $\lambda_j$; moreover property \ref{item:13} implies that $s_j^*\sigma|_{\mathcal{F}}$ equals the almost symplectic form on the leaves of $\mathcal{F}$, which proves the claim. Observe that, for each $j$,
  $$ s^*_j \de \sigma = s^*_j p^* \phi = \phi|_{U_j}, $$
  \noindent 
  where the first equality follows from property \ref{item:42}. Thus $\de \tau' = \phi - \theta$, where $\theta$ is a closed 3-form vanishing on the leaves of $\mathcal{F}$ whose cohomology class in $\mathrm{H}^3_{\mathrm{rel}}(P;\mathcal{F}) \cong \mathrm{H}^1(P;\mathcal{Z}^2(\nu^*\mathcal{F}))$ is given by the \v Cech cocycle $\{\lambda_l - \lambda_j = - \de \kappa_{jl}\}$. Therefore the cohomology class of $- \theta$ equals $[\de \tau' - \phi]$. Since $\tau'$ is a 2-form on $P$ extending the almost symplectic forms on the leaves of $\mathcal{F}$, $[\de \tau' - \phi] = \xi_{\phi}$ and the result follows.
\end{proof}

\subsection{Constructing twisted isotropic realisations}\label{sec:constr-isotr-real}
Proposition \ref{prop:isotropic_implies_regular} and Corollary \ref{cor:tias_twisted_ir} give necessary conditions for a twisted Poisson manifold to admit a twisted isotropic realisation, namely that it is regular and that its almost symplectic foliation admits a transversal integral affine structure. Therefore, given a regular twisted Poisson manifold $\tpm$ admitting a transversal integral affine structure $\Lambda \to P$ on its almost symplectic foliation $\mathcal{F}$, the construction problem of twisted isotropic realisations consists of two steps:

\begin{enumerate}[label= \textbf{Step} \arabic*, ref= \arabic*] 
\item \label{item:21} classify, up to isomorphism, all transversally integral affine structures on $\mathcal{F}$;
\item \label{item:22} for a fixed transversal integral affine structure $\Lambda \to P$, determine which elements of 
  $$\mathrm{H}^2(P;\mathcal{P}_{\Lambda}) \cong \mathrm{H}^1(P;\mathcal{C}^{\infty}(\nu^*\mathcal{F}/\Lambda))$$
  \noindent
  are the Chern class of some twisted isotropic realisation of $\tpm$ whose associated period net is $\Lambda \to P$.
\end{enumerate}

In what follows, only Step \ref{item:22} is studied, as the classification of transversally integral affine structures on a given foliated manifold is beyond the scope of this note. The main theorem of the present work, which generalises the main result in \cite{daz_delz}, gives a criterion for elements in $\mathrm{H}^2(P;\mathcal{P}_{\Lambda})$ to be the Chern class of a twisted isotropic realisation $\ir$ with associated period net $\Lambda$. 

\begin{theorem}\label{thm:main}
  Let $\tpm$ be a regular twisted Poisson manifold with almost symplectic foliation $\mathcal{F}$ and fix a transversally integral affine structure $\Lambda \to P$. An element $c \in \mathrm{H}^2(P;\mathcal{P}_{\Lambda})$ is the Chern class of a twisted isotropic realisation $\ir$ with period net $\Lambda$ if and only if 
  \begin{equation}
    \label{eq:8}
    \mathcal{D}_{\Lambda}(c) = \xi_{\phi},
  \end{equation}
  \noindent
  where $\mathcal{D}_{\Lambda}$ denotes the Dazord-Delzant homomorphism associated to the transversal integral affine structure $\Lambda \to P$ and $\xi_{\phi}$ is the characteristic class of $\tpm$.
\end{theorem}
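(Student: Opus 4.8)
The plan is to prove the two directions separately. The forward implication (\emph{necessity}) is already Proposition \ref{prop:necessary_main_result}, so only the converse needs work: given a transversally integral affine structure $\Lambda \to P$ on $\mathcal{F}$ and a class $c \in \mathrm{H}^2(P;\mathcal{P}_{\Lambda})$ with $\mathcal{D}_{\Lambda}(c) = \xi_{\phi}$, I must \emph{build} a twisted isotropic realisation $\ir$ whose period net is $\Lambda$ and whose Chern class is $c$. The strategy is to first construct a ``model'' realisation with the correct period net but trivial Chern class, and then twist it by a groupoid-valued cocycle representing $c$; the compatibility condition $\mathcal{D}_{\Lambda}(c) = \xi_{\phi}$ is exactly what is needed to make the resulting patched-together $2$-form well defined and closed in the relevant sense.

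Concretely, first I would produce a reference $2$-form $\tau \in \Omega^2(P)$ extending the almost symplectic forms on the leaves of $\mathcal{F}$ (this exists by Property \ref{item:11}), so that $\de\tau - \phi$ is a closed relative $3$-form representing $\xi_{\phi}$. On the bundle of tori $\nu^*\mathcal{F}/\Lambda \to P$ one has the canonical closed $2$-form $\omega_0$ of Proposition \ref{prop:almost_symplectic_pullback}/Remark \ref{rk:existence_omega_quotient}, and $\omega_0 + \mathrm{pr}^*\tau$ makes $\mathrm{pr}: \nu^*\mathcal{F}/\Lambda \to P$ into a twisted isotropic realisation of $(P,\pi,\de\tau)$ with period net $\Lambda$ and trivial Chern class (this is essentially Example \ref{exm:regular_twisted_poisson_admit_wtir} pushed down to the compact quotient; note $\de(\omega_0 + \mathrm{pr}^*\tau) = \mathrm{pr}^*\de\tau$, so the twisting form on $P$ is $\de\tau$, not $\phi$). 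Second, I would use the isomorphism $\delta: \mathrm{H}^1(P;\mathcal{C}^\infty(\nu^*\mathcal{F}/\Lambda)) \xrightarrow{\sim} \mathrm{H}^2(P;\mathcal{P}_{\Lambda})$ of equation \eqref{eq:6} to represent $c$ by a \v Cech $1$-cocycle $\{\kappa_{jl} + \Lambda\}$ with values in the sheaf $\mathcal{C}^\infty(\nu^*\mathcal{F}/\Lambda)$, relative to a good cover $\mathcal{U} = \{U_j\}$ of $P$. Using the translation action of the groupoid $\nu^*\mathcal{F}/\Lambda$ on itself, these transition functions glue the local pieces $(\nu^*\mathcal{F}/\Lambda)|_{U_j}$ into a principal $\nu^*\mathcal{F}/\Lambda$-bundle $p: M \to P$ whose isomorphism class is precisely $\kappa$, hence (by Theorem \ref{thm:smooth_structure} and Definition \ref{defn:cc}) has period net $\Lambda$ and Chern class $c$; what remains is to put an almost symplectic form $\sigma$ on $M$ making $p$ into a twisted isotropic realisation of $(P,\pi,\phi)$.

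The heart of the argument — and the step I expect to be the main obstacle — is defining $\sigma$ globally. On $p^{-1}(U_j)$, identified via a local section with $(\nu^*\mathcal{F}/\Lambda)|_{U_j}$, the natural candidate is $\omega_0 + \mathrm{pr}^*(\tau + \lambda_j)$ for suitable correction $2$-forms $\lambda_j$ on $U_j$ vanishing on the leaves of $\mathcal{F}$. For these local forms to agree on overlaps one needs, by Proposition \ref{prop:action_not_almost_symplecto} applied to the gluing maps $\varphi^1_{\kappa_{jl}}$, that $\lambda_j - \lambda_l = \de\kappa_{jl}$ on $U_j \cap U_l$; this is a $\mathcal{Z}^2(\nu^*\mathcal{F})$-valued coboundary equation whose obstruction is exactly the class of $\{\de\kappa_{jl}\}$, namely $\mathcal{D}_{\Lambda}(c) \in \mathrm{H}^1(P;\mathcal{Z}^2(\nu^*\mathcal{F})) \cong \mathrm{H}^3_{\mathrm{rel}}(P;\mathcal{F})$. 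Running the computation in Proposition \ref{prop:necessary_main_result} in reverse: since the sheaf of relative $2$-forms is fine, one can solve for $\{\lambda_j\}$ precisely when $\mathcal{D}_{\Lambda}(c)$ is cohomologous to $[\de\tau - \phi] = \xi_{\phi}$, which is the hypothesis \eqref{eq:8}. The resulting global $2$-form $\sigma$ then satisfies $\de\sigma = p^*\phi$ (since globally $\sigma$ is modelled on $\omega_0 + \mathrm{pr}^*\tau'$ with $\tau'$ the $2$-form on $P$ obtained by patching $\tau + \lambda_j$, and $\de\tau' - \phi$ represents $\xi_\phi - \mathcal{D}_\Lambda(c) = 0$ so $\de\tau'$ and $\phi$ agree up to a relative coboundary absorbed into the $\lambda_j$), it is non-degenerate because it is so fibrewise on $\omega_0$ and transversally on $\tau$ (this is where the regularity and the ``extending the leafwise form'' property of $\tau$ are used), the fibres are isotropic by construction, and $p$ is an almost Poisson morphism; hence $\ir$ is a twisted isotropic realisation of the desired type. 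Verifying non-degeneracy and the precise identity $\de\sigma = p^*\phi$ (rather than $p^*\de\tau$) is where care is needed, but both follow from tracking the relative cohomology bookkeeping established in Section \ref{sec:affine-geometry}.
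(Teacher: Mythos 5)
Your proposal is correct and follows essentially the same route as the paper's proof: glue the local pieces $\nu^*\mathcal{F}/\Lambda|_{U_j}$ by translations along a cocycle $\{\kappa_{jl}+\Lambda\}$ representing $\delta^{-1}(c)$, equip the resulting bundle with the locally defined forms $\omega_0 + p^*(\tau + \zeta_j)$, and use $\mathcal{D}_{\Lambda}(c)=\xi_{\phi}$ to arrange $\de\sigma = p^*\phi$. One point to tidy: the gluing equation $\zeta_l-\zeta_j=\de\kappa_{jl}$ is solvable unconditionally because the sheaf of relative $2$-forms is fine (so it is not where the hypothesis is needed, contrary to your phrasing); the hypothesis \eqref{eq:8} enters only afterwards, exactly as in your final parenthetical, to subtract $p^*\eta$ for a globally defined relative $2$-form $\eta$ so that the twisting form becomes $\phi$ rather than $\de\tau-\theta$.
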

\begin{proof}
  Proposition \ref{prop:necessary_main_result} proves that the condition of equation \eqref{eq:8} is necessary. It remains to show that it is sufficient.  Suppose that $c$ satisfies equation \eqref{eq:8}; following the notation of the proof of Proposition \ref{prop:necessary_main_result}, let $\kappa \in \mathrm{H}^1(P;\mathcal{C}^{\infty}(\nu^*\mathcal{F}/\Lambda))$ be the unique class such that $\delta(\kappa) = c$, where $\delta$ is the isomorphism of equation \eqref{eq:6}. As in the case of isotropic realisations of Poisson manifolds, it suffices to check that property \ref{item:42} holds for some almost symplectic form on the total space of a principal $\nu^*\mathcal{F}/\Lambda \to P$-bundle over $P$ whose isomorphism class corresponds to $\kappa$. \\

 Fix a good open cover $\mathcal{U}=\{U_j\}$ of $P$ and let $\{\kappa_{jl} +\Lambda\}$ be a \v Cech 1-cocycle with values in $\mathcal{C}^{\infty}(\nu^*\mathcal{F}/\Lambda)$ representing $\kappa$. A principal $\nu^*\mathcal{F}/\Lambda \to P$-bundle over $P$ whose isomorphism class is classified by $\kappa$ can be constructed by gluing the family $\{\nu^*\mathcal{F}/\Lambda|_{U_j} \to U_j\}$ (over the identity on $P$) via
  \begin{equation*}
    \begin{split}
      \psi_{jl}: \nu^*\mathcal{F}/\Lambda|_{U_j \cap U_l} &\to \nu^*\mathcal{F}/\Lambda|_{U_j \cap U_l} \\
      \alpha + \Lambda &\mapsto \alpha+\kappa_{jl} +\Lambda,
    \end{split}
  \end{equation*}
  \noindent
  where $j,l$ are indices such that $U_j \cap U_l \neq \emptyset$. Denote a representative of this isomorphism class by $p: M \to P$ and note that, for each $j$, the bundles $p:M|_{U_j}\to U_j$, $\mathrm{pr}: \nu^*\mathcal{F}/\Lambda|_{U_j} \to U_j$ can be identified. By assumption, the cohomology class of the \v Cech cocycle $\{\de \kappa_{jl}\}$ is $\xi_{\phi} \in \mathrm{H}^1(P;\mathcal{Z}^2(\nu^*\mathcal{F}))$, where the isomorphism $\mathrm{H}^1(P;\mathcal{Z}^2(\nu^*\mathcal{F})) \cong \mathrm{H}^3_{\mathrm{rel}}(P;\mathcal{F})$ of \cite{el_kacimi_alaoui,vaisman} is used tacitly. Since the sheaf of 2-forms on $P$ vanishing on $F$ is fine (cf. \cite{vaisman}), for each $j$, let $\zeta_j$ be a 2-form defined on $U_j$ and vanishing on $\mathcal{F}|_{U_j}$ such that
  \begin{equation}
    \label{eq:9}
    \zeta_l - \zeta_j = \de \kappa_{jl}
  \end{equation}
  \noindent
  (cf. proof of Proposition \ref{prop:necessary_main_result}).
  Fix a 2-form $\tau \in \Omega^2(P)$ extending the almost symplectic forms on the leaves of $\mathcal{F}$ (cf. Property \ref{item:11}), and define, for each $j$,
  $$ \sigma_j : = \omega_0|_{U_j} + p^* (\tau + \zeta_j), $$
  \noindent
  where $p: M|_{U_j} \to U_j$ denotes projection and $\omega_0$ is the 2-form on $\nu^*\mathcal{F}/\Lambda$ which descends from the restriction of the canonical symplectic form on $\cotan P$ to $\nu^* \mathcal{F}$ (cf. Remark \ref{rk:existence_omega_quotient}). By construction, $\sigma_j$ is almost symplectic, since $\omega_0|_{U_j} + p^* \tau$ is and $\zeta_j$ vanishes on the leaves of $\mathcal{F}|_{U_j}$. The claim is that, for all $j,l$ such that $U_j \cap U_l \neq \emptyset$, $\psi_{jl}^* \sigma_j = \sigma_l$. For,
  \begin{equation*}
    \begin{split}
      \psi_{jl}^* \sigma_j &= \psi_{jl}^*(\omega_0|_{U_j}) + p^*(\tau + \zeta_j) \\
      &=\omega_0|_{U_l} + p^* \de \kappa_{jl} + p^*(\tau + \zeta_j) \\
      &= \omega_0|_{U_l} + p^*(\tau + \zeta_l) = \sigma_l,
    \end{split}
  \end{equation*}
  \noindent
  where the first equality follows from the fact that $p \circ \psi_{jl} = p$, the second from the definition of $\omega_0$ and the fact that, for any 1-form $\alpha$, $\alpha^* \Omega_{\mathrm{can}} = \de \alpha$, where $\Omega_{\mathrm{can}}$ is the canonical symplectic form on $\cotan P$, and the third from equation \eqref{eq:9}. Therefore, the locally defined 2-forms $\{\sigma_j\}$ patch together to give a globally defined almost symplectic 2-form $\sigma$ on the total space of $p: M \to P$. Note that, by construction, 
  $$ \de \sigma = p^*(\de \tau - \theta), $$
  \noindent
  where $\theta$ is a closed 3-form on $P$ vanishing on the leaves of $\mathcal{F}$ whose cohomology class in $\mathrm{H}^3_{\mathrm{rel}}(P;\mathcal{F})$ is defined by the \v Cech cocycle $\{\zeta_l - \zeta_j\}$. Since, by assumption, the cohomology class of $\{\zeta_l - \zeta_j = \de \kappa_{jl}\}$ is $\xi_{\phi}$, it follows that $\de \tau - \phi = \theta + \de \eta$, where $\eta$ is a 2-form on $P$ which vanishes on the leaves of $\mathcal{F}$. Set $\sigma' = \sigma - p^* \eta$. The 2-form $\sigma'$ is almost symplectic since $\sigma$ is and $\eta$ vanishes on the leaves of $\mathcal{F}$; moreover, by construction, $\de \sigma' = p^* \phi$. Therefore property \ref{item:42} holds, thus completing the proof of the theorem.
\end{proof}

\begin{example}\label{exm:sphere}
   Let $P = S^2 \times \R$ and denote by $\mathrm{pr}_{S^2}:P \to S^2$ and $\mathrm{pr}_{\R}: P \to \R$ projections onto the first and second factors respectively. The 2-form 
   $$ \tau = (1+t^2) \omega_{S^2}, $$
   \noindent
   where $\omega_{S^2}$ is the standard symplectic form on $S^2$ and $t$ is the standard coordinate on $\R$, defines a smooth bivector field $\pi$ on $P$ which is $\de \tau$-twisted (in fact, it is Poisson as shown in Remark \ref{rk:complete_realisations_integrability}). Projection onto the second factor $\mathrm{pr}_{\R}$ defines the (almost) symplectic foliation $\mathcal{F}$ of $(P,\pi,\de \tau)$; it admits a transversally integral affine structure defined by 
  $$ \Lambda :=\mathrm{pr}_{\R}^*\Lambda_{\R} \subset \nu^*\mathcal{F},$$
  \noindent
  where $\Lambda_{\R} \to \R$ is given by
  $$ \Lambda_{\R} :=\{(t,\mathbf{p}) \in \cotan \R \;|\; \mathbf{p} \in \Z \langle \de t \rangle \} $$
  \noindent
  (cf. proof of Proposition \ref{prop:tias_equiv_period_net}). Note that $\Lambda \to P$ is a trivial bundle, since it is the pull-back of a trivial bundle. Thus $\mathrm{H}^2(P;\mathcal{P}_{\Lambda}) \cong \mathrm{H}^2(P;\Z)$, where the coefficient system $\Z$ is identified with $\Z \langle \mathrm{pr}_{\R}^*\de t \rangle$. Since $P$ is homotopy equivalent to $S^2$, it follows that elements of $\mathrm{H}^2(P;\mathcal{P}_{\Lambda})$ can be represented by
  $$ d [\mathrm{pr}^*_{S^2}\omega_{S^2}] \otimes \mathrm{pr}_{\R}^*\de t, $$
  \noindent
  where $d \in \Z$ and $[.]$ denotes considering de Rham cohomology class. Since $\Lambda \to P$ is trivial, the Dazord-Delzant homomorphism $\mathcal{D}_{\Lambda}$ can be computed explicitly (cf. \cite{daz_delz}), namely
  $$ \mathcal{D}_{\Lambda}(d [\mathrm{pr}^*_{S^2}\omega_{S^2}] \otimes \mathrm{pr}_{\R}^*\de t) = [d (\mathrm{pr}^*_{S^2}\omega_{S^2}) \wedge (\mathrm{pr}^*_{\R}\de t)] \in \mathrm{H}^3_{\mathrm{rel}}(P;\mathcal{F}).$$
  \noindent
  Note that the characteristic class of $(P,\pi,\de \tau)$ vanishes by construction; thus elements in $\mathrm{H}^2(P;\mathcal{P}_{\Lambda})$ are the Chern class of some twisted isotropic realisation of $(P,\pi,\de \tau)$ with period net $\Lambda \to P$ if and only if they lie in $\ker \mathcal{D}_{\Lambda}$. The claim is that $\mathcal{D}_{\Lambda}$ is injective, so that only $0 \in \mathrm{H}^2(P;\mathcal{P}_{\Lambda})$ arises as a Chern class. Suppose that $d (\mathrm{pr}^*_{S^2}\omega_{S^2}) \wedge (\mathrm{pr}^*_{\R}\de t) = \de \upsilon$, for $\upsilon$ a 2-form which vanishes on the leaves of $\mathcal{F}$. Consider the submanifold with boundary $S^2 \times [0,1] \hookrightarrow P$; Stokes' theorem implies that 
  $$\int\limits_{S^2\times[0,1]} \de \upsilon = \int\limits_{\partial (S^2\times[0,1])} \upsilon = 0, $$
  \noindent
  since $\upsilon$ vanishes on the leaves of $\mathcal{F}$. On the other hand,
  $$ \int\limits_{S^2\times[0,1]} d (\mathrm{pr}^*_{S^2}\omega_{S^2}) \wedge (\mathrm{pr}^*_{\R}\de t) = \int\limits_{\partial (S^2\times[0,1])} - d t \mathrm{pr}^*_{S^2} \omega_{S^2} = -d. $$
  \noindent
  Therefore, $d = 0$, which proves the claim.
\end{example}

\section{Conclusion}\label{sec:conclusion}
As mentioned in the Introduction, one of the main reasons behind this work is to understand non-commutative Hamiltonian integrability for dynamical systems defined on twisted Poisson manifolds (cf. \cite{balseiro_garcia_naranjo}). In order to do this, it is first necessary to make sense of what `non-commutative Hamiltonian integrability' (NCHI for short) means in the twisted Poisson setting. In future work, we aim to give a suitable definition of NCHI on twisted Poisson (and, more generally, Dirac) manifolds  and to study the local, semilocal and global theory of such systems in the spirit of \cite{daz_delz,dui} using the contravariant approach of \cite{lgmv}, with a view to apply this theory to the examples studied in \cite{balseiro_garcia_naranjo}.

\section*{Acknowledgements}
We would like to thank Leo Butler, Francesco Fass\`o, Rui Loja Fernandes, Luis Garc\'ia-Naranjo, Andrea Giacobbe and David Mart\'inez-Torres for various discussions essential to this work. D.S. would like to thank the Universit\`a degli Studi di Padova for the hospitality during the period in which part of this work was carried out.

\end{document}